\title{Disordered arcs and Harer stability}
\author{Oscar Harr, Max Vistrup, and Nathalie Wahl}
\date{\today}
\newcommand*{\M}{{{\mathbf M}_2}}
\newcommand{\braidGrpd}{\mathbf B}
\newcommand{\BB}{\braidGrpd}
\newcommand{\E}{E}
\newtheorem{ThA}{Theorem}
\newcommand{\hash}{\mathbin{\text{\normalfont \texttt{\#}}}}
\newcommand{\inc}{\hookrightarrow}
\newcommand{\rar}{\longrightarrow}
\newcommand{\Ga}{\Gamma}
\newcommand{\s}{\sigma}
\newcommand{\Si}{\Sigma}
\newcommand{\x}{\times}
\newcommand{\id}{\operatorname{id}}
\newcommand{\Homeo}{\f{Homeo}}
\tikzset{
  vertlabl/.style={anchor=north, rotate=90, inner sep=1mm}L
}
\title{Disordered arcs and Harer stability}
\begin{document}

\begin{abstract}

  We give a new proof of homological stability
  with the best known isomorphism range
  for mapping class groups of surfaces with respect to genus.
  The proof uses the framework of Randal-Williams--Wahl and Krannich
  applied to disk stabilization in the category of bidecorated surfaces,
  using the Euler characteristic instead
  of the genus as a grading.
  %Curiously,
  The monoidal category of bidecorated surfaces does not admit a braiding,
  distinguishing it from previously known settings for homological stability.
  Nevertheless, we find that it admits a suitable Yang--Baxter element,
  which we show is sufficient structure for homological stability arguments.
  
%   of the best known homological stability isomorphism result for
% the mapping class groups of surfaces with respect to genus, using the
% Euler characteristic of the surface (or equivalently the number of
% arcs needed to build it) as a grading, instead of the genus, and
% applying the framework of Randal-Williams--Wahl and Krannich to  disk
% stabilization in the category of bidecorated surfaces.
% The result follows from the existence of an appropriate Yang--Baxter
% element in that category, and the high connectivity of an associated simplicial complex of ``disordered arcs''. 

  \end{abstract}

\maketitle

\section{Introduction}
\label{sec:intro}
Let $S_{g,r}^s$ be a surface of genus $g$
with $r$ boundary components and $s$ punctures.
The mapping class group $\Gamma(S_{g,r}^s):=\pi_0\Homeo (S_{g,r}^s \textrm{ rel } \del S)$
of $S$ satisfies {\em homological stability}:
the homology group $H_i(\Gamma(S_{g,r}^s);\Z)$
is independent of $g$ and $r$ when $g$ is large relative to $i$.
This stability result was originally proved by Harer in \cite{harer85},
and later improved by Ivanov, Boldsen and Randal-Williams
\cite{Iva89,boldsen12,RW16},
see also \cite{Har93,wahl-handbk,HatVog17,GKRW19MCG}.
We recast the result here as a stability theorem
in the category of {\em bidecorated surfaces},
and give a new proof of the best know stability range using the most
straightforward inductive argument originally designed by Quillen, and
formalized in \cite{RWW17,krannich19}.
Our proof at the same time illustrates how little is needed to run the stability
machines of these two papers.

\smallskip

Our main stability result is the following, recovering precisely the
ranges of \cite[Thm 1]{boldsen12} and \cite[Thm 7.1 (i),(ii)]{RW16}:

\begin{ThA}\label{thmintro:stab1}
  Let $S_{g,b}^s$ be a surface of genus $g\ge 0$, with $r\ge 1$
  marked boundary components  and $s\ge 0$ punctures, and let 
  $\Ga(S_{g,r}^s)=\pi_0\Homeo (S_{g,r}^s\textrm{ rel }\del S)$ denote its mapping class group.
  The map 
  $$
  H_i(\Gamma(S_{g,r}^s);\Z )\to H_i(\Gamma(S^s_{g,r+1}); \Z)
  $$
  induced by gluing a pair or pants along one boundary component is always injective, 
  and  an isomorphism when $i\leq\frac{2g} 3$, and the map 
  $$
  H_i(\Gamma(S_{g,r+1}^s);\Z )\to H_i(\Gamma(S^s_{g+1,r}); \Z) $$
  induced by gluing a pair of pants along two boundary components is an  epimorphism
  when $i\leq\frac{2g+1} 3$
  and an  isomorphism when $i\leq\frac{2g-2} 3$.
\end{ThA}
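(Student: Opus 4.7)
The plan is to apply the homological stability machinery of Randal-Williams--Wahl \cite{RWW17} and Krannich \cite{krannich19} to the monoidal groupoid $\mathbf B$ of \emph{bidecorated surfaces}: compact oriented surfaces equipped with two marked intervals on the boundary, with monoidal sum $X\oplus Y$ given by gluing a pair of pants that identifies a chosen interval on each of $X$ and $Y$ with two legs of the pants. Grading by $-\chi$ rather than by $g$, both stabilization maps in the theorem become instances of $X\mapsto X\oplus P$ for a fixed pair of pants $P$ (with appropriately chosen decorations); in both cases $-\chi$ increases by one, even though the effect on $g$ differs between the two maps.

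Feeding this into the stability machine requires $\mathbf B$ to be homogeneous in the sense of \cite{RWW17}. Unlike in previous applications, $\mathbf B$ is not braided: the two legs of the gluing pants are distinguished by the decorations, so the natural swap of summands fails to be a symmetry. The proposal is to construct instead a Yang--Baxter element on $P\oplus P$ (for instance, a half-twist supported in a neighbourhood of two adjacent pants) and to prove, as a separate categorical lemma, that the Yang--Baxter structure is enough for the homogeneity, cancellation, and semi-simplicial constructions of \cite{RWW17,krannich19} to go through. This is one of two main loci of work; it amounts to a careful audit of where a full braiding is actually used in those papers.

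The other main task, which I expect to be the harder one, is to prove high connectivity of the associated destabilization semi-simplicial space at a surface $\Si$. Its $p$-simplices should be described as $(p+1)$-tuples of disjoint embedded arcs between the two boundary decorations of $\Si$, with the disorder data dictated by the Yang--Baxter element replacing the usual linear ordering of the arcs at the boundary---this is the \emph{disordered arc complex} of the title. After translating $-\chi$ back into $g$, the connectivity estimate needs to be of slope $\tfrac{1}{3}$ in $-\chi$ to recover the Boldsen--Randal-Williams slope $\tfrac{2}{3}$ in $g$. My approach would be the standard bad-simplices induction on $-\chi$, comparing the disordered arc complex to a more classical complex (e.g.\ a Hatcher-style arc complex on an auxiliary surface obtained by forgetting some decoration data), and isolating the extra combinatorics of the disorder structure in a separate simplicial argument.

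Once both ingredients are in place, the theorem follows from the stability spectral sequence of \cite{RWW17,krannich19}, applied once with decorations so that the stabilization reads $S_{g,r}^s\to S_{g,r+1}^s$ and once with decorations giving $S_{g,r+1}^s\to S_{g+1,r}^s$; the two ranges in the theorem then come from a single abstract range in $-\chi$ translated through these two identifications. Always-injectivity for the first map is an additional statement, obtained from the existence of a natural $H_*$-retract, e.g.\ by capping off one of the newly-created boundary components of the pair of pants.
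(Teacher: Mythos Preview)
Your outline matches the paper's strategy closely: bidecorated surfaces, a Yang--Baxter element in lieu of a braiding, the disordered arc complex as the destabilization space, a bad-simplices connectivity argument, and the injectivity via capping off. Two refinements in the paper are worth noting. First, the stabilizing object is not a pair of pants but a bidecorated \emph{disk} $D$, and the monoidal product $\hash$ glues the two marked intervals directly (half of each) rather than via an auxiliary pair of pants; the pair-of-pants stabilizations in the theorem then \emph{become} the single operation $-\hash D$, and the Yang--Baxter element is simply the Dehn twist on the cylinder $D\hash D$. This makes the combinatorics cleaner and the identification of the destabilization space with an arc complex essentially tautological. Second, and more importantly, there is no need to audit where a braiding is used in \cite{RWW17,krannich19}: a Yang--Baxter operator on $X$ is exactly the data of a strong monoidal functor from the braid groupoid $\BB$, which makes your category an $E_1$-module over $\BB$; since $\BB$ itself \emph{is} braided, Krannich's framework applies off the shelf. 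This bypasses your proposed ``separate categorical lemma'' entirely. For the connectivity, the paper's comparison complex is the non-separating arc complex $\mathcal B^\nu$ (known to be $(2g+\nu-3)$-connected), and the induction is on genus rather than $-\chi$, with a small case split on $\nu$ to squeeze out the sharp constant; your Hatcher-style comparison would likely work but may cost you the optimal offset.
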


Combining the two maps in the theorem gives a genus stabilization that
is  known to be close to  optimal by a computation of
Morita \cite{Morita}  and low dimensional computations, see
Remarks~\ref{rem:optimal1} and~\ref{rem:optimal2}.
While we do not
know whether the two
ranges in the above statement can be individually improved, it is
remarkable that three rather different proofs (those of
Boldsen \cite{boldsen12},  Randal-Williams \cite{RW16}, and ours) end up with the
exact same ranges.

\medskip

A particular feature of our proof is that the two maps occurring in
the theorem will be for us ``the same map'',
namely a disk stabilization
in the category $\M$ of {\em bidecorated surfaces}.
A bidecorated surface is
a surface $S$ with two marked intervals  $I_0,I_1$ in its boundary. 
The two intervals may lie on the same or on different boundary components.
Morphisms in $\M$ are mapping classes, i.e.~isotopy classes of homeomorphisms,
and $\M$ admits a monoidal structure $\hash$
defined by identifying the marked intervals in pairs.

Our main example of a bidecorated surface will be the bidecorated disk $D$.
As shown in Lemma~\ref{lem:surface-type},
taking sums of the disk with itself in $\M$ produces surfaces of any genus:
$D^{\hash 2g+1}$ is a surface $S_{g,1}$ of genus $g$ with a single boundary component,
while $D^{\hash 2g+2}$ is a surface $S_{g,2}$ of genus $g$ with two boundary components,
each containing a marked interval.
To obtain any surface $S_{g,r}^s$ with $r\ge 1$,
we will consider the object $S\hash D^{\hash 2g}$ in $\M$,
for $S=S_{0,r}^s$ a genus 0 surface with $r$ boundary components and $s$ punctures.
Now the maps in Theorem~\ref{thmintro:stab1}
are precisely  the disk stabilization maps in $\M$:
$$
\Aut_\M (S\hash D^{\hash 2g}) \xrightarrow{\hash D} \Aut_\M (S\hash D^{\hash 2g+1}) \xrightarrow{\hash D} \Aut_\M (S\hash D^{\hash 2g+2})
$$
for these particular choices of surfaces. 

Theorem~\ref{thmintro:stab1} is thus the statement that disk stabilization $\hash D$ in $\M$ induces isomorphisms on
the homology of these automorphism groups in a range.
 We show in the present paper that this result can be obtained as a
direct application of the main result of  \cite{krannich19},
from which an additional stability statement with twisted coefficients
automatically follows. We start by stating this additional result.

\subsection*{Twisted coefficients}
Fix $r\ge 1 $ and $s\ge 0$. In our setting, a coefficient system $F$ for the mapping class group $\Ga(S_{g,r}^s)$ is a collection of $\Z[\Ga(S_{g,r}^s)]$-modules
$F_{2g}$ and  $\Z[\Ga(S_{g,r+1}^s)]$-modules $F_{2g+1}$ for each
$g\ge 0$, together with maps
$$
F_{n}\to F_{n+1}
$$
equivariant with respect to the disk stabilization and satisfying that
a certain Dehn twist %in the added cylinder $D\hash D$
acts trivially on the image of $F_{n}$ in $F_{n+2}$
under double stabilization (see Definition~\ref{def:coef-system}).
Given a coefficient system,
one can define a notion of degree;
a constant coefficient systems has degree 0
and for example the coefficient system
$F_{2g+i }=H_1(S_{g,r+i}^s;\Z)^{\otimes k}$, $i\in\lbrace 0,1\rbrace$, has degree $k$
(see Example~\ref{exmp:coef-systems}). 

We obtain the following twisted stability result:

 \begin{ThA}\label{thmintro:stab2}
      Let $\Ga(S_{g,r}^s)$ be as in Theorem~\ref{thmintro:stab1}, and $F$ be a coefficient system of degree $k$.
      The stabilization map
      \begin{equation*}
        \label{eq:twisted-stab1}
        H_i(\Gamma (S_{g,r}^s);F_{2g} )\to H_i(\Gamma (S^s_{g,r+1}); F_{2g+1} ) 
      \end{equation*}
      is an epimorphism
      for $i\leq \frac{2g-3k-2} 3$
      and  an isomorphism for $i\leq\frac{2g - 3k - 5} 3$, and
      the map
      \begin{equation*}
        \label{eq:twisted-stab2}
        H_i(\Gamma (S_{g,r+1}^s);F_{2g+1} )\to H_i(\Gamma (S^s_{g+1,r}); F_{2g+2} )
      \end{equation*}
      is an epimorphism
      for $i\leq\frac{2g-3k-1}{3}$
      and  an isomorphism for $i\leq\frac{2g-3k-4} 3$.
      % at $N\geq 0$ and $n>N$.
      % \item
      %   These ranges improve to    epimorphisms
      %   for $i\leq \frac{2g-k-2} 3$
      %   and isomorphisms for $i\leq\frac{2g-k-5} 3$
      In these bounds, $3k$ can be replaced by $k$  if $F$ is in addition split in the sense of Definition~\ref{def:coef-system}. 
      % degree $r$ at $N\geq 0$ and $n>N$.
   \end{ThA}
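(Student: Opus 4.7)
The plan is to deduce Theorem~\ref{thmintro:stab2} as a direct consequence of the general twisted homological stability theorem of Krannich~\cite{krannich19}, applied to the monoidal category $\M$ of bidecorated surfaces with stabilization by the bidecorated disk $D$. Krannich's framework builds coefficient systems into the axiomatic setup from the start, so once the hypotheses that yield Theorem~\ref{thmintro:stab1} are in place, both theorems fall out of the same categorical input with no additional topological work.

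Concretely, one first matches the notion of coefficient system in Definition~\ref{def:coef-system} with the one in \cite{krannich19}: under the identification that sends a functor on the groupoid of $\hash D$-stabilizations in $\M$ to the sequence $(F_n)_{n\ge 0}$, the Dehn-twist condition imposed on double stabilization corresponds precisely to Krannich's vanishing condition on a coefficient system, and the degree $k$ (respectively, the split property) of Definition~\ref{def:coef-system} agrees with Krannich's notion of polynomial degree (respectively, of splitness). One then feeds Krannich's theorem the connectivity estimate for the space of disordered arcs in each bidecorated surface that is the topological heart of the paper; this is exactly the input used to deduce Theorem~\ref{thmintro:stab1}. Krannich's theorem converts this estimate directly into a slope-$2/3$ stability range, with the threshold shifted by $3k$ for a coefficient system of degree $k$, and improved to a shift of $k$ in the split case -- yielding the exact ranges in the statement. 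Since $\chi(S_{g,r}^s)=2-2g-r-s$, a shift of the Euler-characteristic grading by $2$ corresponds to increasing $g$ by one with $r,s$ fixed, which is how the single stability statement for $\hash D$ in $\M$ produces both the pair-of-pants-on-one-boundary and pair-of-pants-on-two-boundaries cases of the theorem simultaneously.

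The main obstacle is the connectivity estimate for the disordered arc space, which occupies the core of the paper and whose very formulation is designed to circumvent the absence of a braiding in $\M$; this is where the Yang--Baxter element flagged in the abstract enters. Once that connectivity is in hand, both Theorem~\ref{thmintro:stab1} and Theorem~\ref{thmintro:stab2} follow formally from Krannich's machine, the only residual check being that his theorem -- originally formulated in a braided setting -- still applies with the weaker Yang--Baxter structure available on $\M$, and that his degree filtration on coefficient systems specialises to the one of Definition~\ref{def:coef-system} under the identification above.
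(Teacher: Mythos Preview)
Your proposal is correct and follows essentially the same route as the paper: one sets up the $E_1$-module structure of $\M$ over the braid groupoid via the Yang--Baxter operator, identifies the space of destabilizations with the disordered arc complex, feeds the connectivity bound of Theorem~\ref{thm:disord-cnt} into Krannich's Theorem~C, and then specializes to $S=(S_{0,r}^s,1,\phi)$ with $n=2g$ and $n=2g+1$ to read off the two maps and their ranges. The paper packages this as Theorem~\ref{thm:stab}(b),(c) and then deduces Theorem~\ref{thmintro:stab2} by exactly the substitution you describe.
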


Stability theorems for mapping class groups with twisted coefficients
can be found in the work of  Ivanov, Boldsen, Randal-Williams--Wahl,
and Galatius--Kupers--Randal-Williams \cite{boldsen12,IvanovTwisted,RWW17,GKRW19MCG}. 
The results are not easy to compare as the types of coefficient
system that are permitted depend on the paper,
but some classical examples such as the one described above fit
all frameworks (see Remarks~\ref{rmk:different-category}
and~\ref{rem:optimal2} for more details).

\subsection*{Braided action and Yang--Baxter operators}
We want to obtain Theorems~\ref{thmintro:stab1}
and~\ref{thmintro:stab2} as consequences of 
Theorems A and C of \cite{krannich19}.  For this,  we first have to show that disk
stabilization in the monoidal category $(\M,\hash)$ comes from  an
action of a braided monoidal groupoid.

Let $\BB$ denote the  groupoid of braid groups, with
object the natural numbers and the braid group $B_n$ as automorphisms
of $n$.  We will construct an action of $\BB$ on $\M$ using
an appropriate {\em Yang--Baxter operator} in $\M$:  
The sum of bidecorated disks $D\hash D$ in $\M$ is a cylinder, whose
mapping class group is an infinite cyclic group generated
by the Dehn twist $T$ along the core circle of the cylinder. It turns
out that this morphism $T\in
\Aut_{\M}(D\hash D)$ is a Yang--Baxter operator in $\M$,
in the sense that it satisfies the equation
$$(T\hash 1)(1\hash T)(T\hash 1)=(1\hash T)(T\hash 1)(1\hash T)$$
in $\Aut_{\M}(D^{\hash 3})$. The same holds for the inverse twist
$T^{-1}$, that will turn out more convenient for us. 
As explained in Section~\ref{sec:YB},
we  get an associated strong monoidal functor
$\BB\to\M$ taking the object $n$ to $D^{\hash n}$. 
%and the $i$th standard generator
%$\s_i\in B_n=\Aut_{\BB}(n)$ to the  twist
%$1^{i-1}\hash T^{-1}\hash 1^{n-i-1}\in\text{Aut}_\M(D^{\hash n})$.
The corresponding homomorphism $B_n\to\text{Aut}_\M (D^{\hash n})$
can be identified with the geometric embedding in the sense of \cite{Waj99},
associated to the chain of curves $a_1,\dots,a_{n-1}$ in
$$
D^{\hash n} = D\hash D\hash \dots \hash D,
$$
where the $i$th curve $a_i$ is the core circle in the $i$th cylinder
$D\hash D$ in the above sum,
see Lemma~\ref{lem:chain} and Example~\ref{exmp:mcg-YB}.

The strong monoidal functor $\BB\to\M$ from above endows $\M$
with the structure of an $\E_1$-module over the braid groupoid $\BB$,
and since the latter is braided monoidal,
we can apply the results of \cite{krannich19} to study disk stabilization in $\M$.

\begin{remark}
  Homological stability frameworks such as \cite{RWW17,krannich19,GKRW19MCG}
  require an $E_2$-algebra, or the weaker structure of $E_1$-module
  over an $E_2$-algebra, as input. 
  This is a priori a lot of data, and it may be that the most natural choice
  in a given context simply does not admit an $E_2$-structure.
  This turns out to be the case for the monoidal category of bidecorated surfaces
  $\M$: 
  In the context of categories,
  $E_2$-structures are given by braided monoidal structures and we show in Section~\ref{sec:notbraided}
  that even the full monoidal subcategory of $\M$ generated by
  our stabilizing object, the disk $D$,
  does not admit a braiding. 
  This distinguishes our situation from most previous examples of homological stability.

  On the other hand, it does not take much
  to equip a given monoidal category $\mathcal X$
  with the structure of an $E_1$-module over a braided monoidal category.
  In fact, as shown in Section~\ref{sec:YB}, any Yang--Baxter operator in $\mathcal X$ determines
  a strong monoidal functor $\BB\to\mathcal X$ from the braid groupoid
  $\BB$, and thus endows $\mathcal X$
  with the structure of an $E_1$-module over $\BB$.
  %If the Yang--Baxter operator is $(X,\tau )$,
  %$X\in\mathcal X$ and $\tau\in\text{Aut}_{\mathcal X}(X\oplus X)$,
  %then the associated functor $\Phi_{X,\tau }\colon\BB\to\mathcal X$
  %sends the monoidal generator $1\in\BB$ to $X$,
  %and $\Phi_{X,\tau }$ is therefore suitable for studying stabilization
  %with the object $X$.
  This perspective also makes sense if $\mathcal X$ itself acts
  on a category $\mathcal M$, and one is interested in the stabilization
  $$
  \mathcal M\xrightarrow{\oplus X}\mathcal M\xrightarrow{\oplus X}\cdots
  $$
 % by restriction of scalars along $\Phi_{X,\tau}$,
induced by acting with an object $X$ of $\mathcal X$ admitting a
Yang-Baxter operator $\tau\in\text{Aut}_{\mathcal X}(X\oplus X)$. 
  The category $\mathcal M$ becomes this way likewise a module over $\BB$,
  where the object $n$ of $\BB$  acts on  $A\in\mathcal M$ via $A\oplus n = A\oplus X^{\oplus n}$.
\end{remark}

\subsection*{Disordered arcs}
Given a category $\mathcal M$ as above, with the structure of an $E_1$-module over a
monoidal category $\mathcal X$
with a distinguished Yang--Baxter operator $(X,\tau )$,
such that acting by $X$ satisfies a certain injectivity property
(see Proposition~\ref{prop:Mmonoid}),
the main result of \cite{krannich19} implies that 
homological stability for stabilization with $X$
is controlled by the connectivity of certain {\em complexes of
  destabilizations}.
In the category of bidecorated surfaces $\M$,
stabilizing with the bidecorated disk $D$ corresponds
homotopically to attaching an arc,
and we show in Proposition~\ref{prop:Wiso}
that the relevant complex of destabilizations
for stabilizing a surface $S$ with a disk $n$ times 
identifies with the
``disordered arc complex''\footnote{We called those {\em disordered} arcs because it is the opposite ordering convention than the one used in the ``ordered arc complex'' of \cite{RW16}.}
associated to the surface $S\hash D^{\hash n}$. This is
a simplicial complex whose vertices are isotopy classes of 
non-separating arcs in the surface
with endpoints $b_0 = I_0(\rfrac 1 2)$
and $b_1 = I_1(\rfrac 1 2)$,
and where a collection of isotopy classes forms a simplex
if the classes can be represented by arcs that are disjoint away from
the endpoints, are jointly non-separating,
and such that the arcs have the same ordering at $I_0$ and $I_1$.

Writing $\mathcal D^\nu(S_{g,r}, b_0, b_1)$ for the disordered arc
complex of a surface $S_{g,r}$ with marked points $b_0$ and $b_1$ in
$\nu=1$ or $\nu=2$ boundary components, the main ingredient of our proof of homological stability is the
following connectivity result:

\begin{ThA}(Theorem~\ref{thm:disord-cnt})\label{introthm:C}
   The disordered arc complex $\mathcal D^\nu(S_{g,r}, b_0, b_1)$ is $\(\frac{2g +\nu - 5} 3\)$-connected.
  \end{ThA}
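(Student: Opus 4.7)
The plan is to prove Theorem~C by induction on the complexity $c := 2g + \nu$. When $c \leq 4$ the claimed bound $\lfloor (2g+\nu-5)/3\rfloor$ is negative, so there is nothing to prove. For the inductive step I would apply the standard Hatcher-style surgery scheme used throughout the literature on arc complexes (cf.\ \cite{wahl-handbk,RW16,krannich19}); this reduces the proof to assembling two ingredients: a description of the links of simplices in $\mathcal D^\nu$ as disordered arc complexes on smaller surfaces, and a complexity count making the induction balance.

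The complexity count is straightforward. For a non-separating arc $\alpha$ from $b_0$ to $b_1$, cutting $S_{g,r}$ along $\alpha$ produces a surface $S'$ whose Euler characteristic has increased by one, so $2g' + r' = 2g + r - 1$; moreover the parameter $\nu$ flips, since if $b_0, b_1$ lay on a single boundary component they now lie on two, and vice versa. Consequently $2g + \nu$ decreases by exactly $1$ per cut, and by $k+1$ after cutting along a jointly non-separating $k$-simplex. Fed into the inductive hypothesis, the link of a $k$-simplex is thus $\lfloor (2g + \nu - 6 - k)/3\rfloor$-connected, which exceeds $n - k - 1$ for every $n \leq \lfloor (2g + \nu - 5)/3 \rfloor$, exactly as a join/link argument requires.

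The real work lies in the link identification: I would show that the link of a simplex $\sigma = \{\alpha_0, \dots, \alpha_k\}$ in $\mathcal D^\nu(S, b_0, b_1)$ is isomorphic to a disordered arc complex on the cut surface $S \setminus \sigma$. Cutting along $\sigma$ splits each of $b_0, b_1$ into $k+2$ copies along the new boundary, and a vertex of the link is an arc $\beta$ disjoint from $\sigma$ whose ordering at $b_0$ matches its ordering at $b_1$ relative to the $\alpha_i$. The disordered compatibility should pin down a canonical pair of copies on $S \setminus \sigma$ between which $\beta$ must run, and joint non-separation of $\sigma$ guarantees the cut surface is connected, so the link presents as a single disordered arc complex of strictly smaller complexity. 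Verifying this combinatorial matching, and checking that it is stable under iterated cutting (so that the induction can be applied to all links in turn), is the main obstacle I anticipate.

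Given these two ingredients, the endgame is routine: fix a vertex $\alpha_0 \in \mathcal D^\nu$ and, given a simplicial map $f : S^n \to |\mathcal D^\nu|$ with $n \leq \lfloor (2g+\nu-5)/3\rfloor$, use a bad-simplex surgery argument to push the image of $f$ into the contractible star of $\alpha_0$. Each simplex of the image not joinable to $\alpha_0$ is replaced using the connectivity of its link established above, and iterating produces the required extension of $f$ to $D^{n+1}$.
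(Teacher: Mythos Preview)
Your link identification is incorrect, and this is where the argument breaks. When you cut $S$ along a disordered $k$-simplex $\sigma=\{\alpha_0,\dots,\alpha_k\}$, each of $b_0,b_1$ splits into $k+2$ copies, one for each ``slot'' between consecutive $\alpha_i$ (and one at each end). An arc $\beta$ in the link of $\sigma$ must sit in the same slot at $b_0$ and at $b_1$---that is what disorderedness forces---but there is no constraint on \emph{which} slot. So there is no canonical pair of basepoints on $S\setminus\sigma$; the link is not a single disordered arc complex but involves arcs between $k+2$ different basepoint pairs on the cut surface, with additional joint non-separation constraints coupling them. Your count ``$2g+\nu$ drops by exactly $k+1$, so the link is $\lfloor(2g+\nu-6-k)/3\rfloor$-connected'' therefore has no target: there is no single $(g',\nu')$ to plug into the inductive hypothesis.

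The paper avoids this by a genuinely different route. Rather than push to the star of a fixed vertex inside $\mathcal D^\nu$, it first fills the sphere in the larger complex $\mathcal B^\nu(S,b_0,b_1)$ of all non-separating arc systems (no ordering condition), which is already known to be $(2g+\nu-3)$-connected. The resulting disk $\hat f:D^{k+1}\to\mathcal B^\nu$ is then corrected by surgering ``regular bad'' simplices: those whose image has the \emph{first} arc at $b_0$ not first at $b_1$. Maximality of such a simplex forces every arc in its link to lie in the single leftmost slot (before $a_0$ at both endpoints) and to be disordered there, so the link genuinely maps to one disordered arc complex $\mathcal D^\mu(S\setminus\hat f(\sigma),b_0',b_1')$ of strictly smaller genus. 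The counting is also more delicate than your uniform ``drops by $k+1$'': the paper splits into $\nu=1$ and $\nu=2$, and within $\nu=1$ the borderline case $p=1$ only closes because one can check $\mu=2$ there, buying the extra $+1$ in the bound.
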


  \begin{remark}
    It is conjectured in \cite[Conj C]{RWW17} that the complex of destabilizations is highly connected if and only
    if stability holds with all appropriate twisted coefficients.
    The slope $2/3$ bounds in Theorems~\ref{thmintro:stab1} and
    \ref{thmintro:stab2} is precisely dictated by 
    the same slope $2/3$ in Theorem~\ref{introthm:C} in the connectivity of the arc
    complex, which is the complex of destabilizations in that case.
    This connectivity bound is best possible among linear bounds as a
    better bound would prove an incorrect stability statement, see
    Remark~\ref{rem:optimal1}.
    \end{remark}

 \subsection*{Organization of the paper.}
 In Section~\ref{sec:high-cnt}
 we prove the high connectivity of the disordered arc complex.
 In Section~\ref{sec:category} we define the monoidal category of
 bidecorated surfaces $(\M,\hash)$, as well as the action of the braid groupoid
 $\BB$ on this category.
 In Section~\ref{sec:stab}, we show Theorems A and B by showing that
 the disordered arc complex agrees with the complex of
 destabilizations, and applying the main result of 
 \cite{krannich19}. 
 Finally, in Section~\ref{sec:braiding} we explain the relationship between
 homological stability and Yang--Baxter operators, and show the
 non-braidedness of the category of bidecorated surfaces.

    \subsection*{Acknowledgements} 
The first and third authors were partially supported by the Danish
National Research Foundation through the Copenhagen Centre for
Geometry and Topology (DRNF151), and the third author by the European Research Council (ERC) under the European Union’s Horizon 2020 research and innovation programme (grant agreement No. 772960).

\section{High connectivity of the disordered arc complex}
\label{sec:high-cnt}

In this section, we prove that the disordered arc complex is highly
connected. It will be defined as a subcomplex of the following simplicial complex of non-separating arcs: 

\begin{definition}\label{def:nonseparating}
  Let $S$ be an orientable surface\footnote{
  By {\em surface} we mean a topological $2$-manifold $S$ which is compact except
  for a finite number of punctures, i.e. there is a compact topological $2$-manifold
  $\overline S$ and an embedding $i\colon S\incl\overline S$ so that
  $\overline S\setminus i(S)$ is a (possibly empty) finite union of points.
}
  with nonempty boundary, and
    let $b_0, b_1$ be distinct points in $\del S$.
    The \emph{complex of non-separating arcs} $\mathcal B(S, b_0, b_1)$ is the simplicial complex
        whose $p$-simplices are collections of $p + 1$ distinct isotopy classes
            of arcs between $b_0, b_1$
            that admit representatives
            $
                a_0, \ldots, a_p
            $
            such that
            \begin{enumerate}
                \item\label{def:nonseparating:1}
                $a_i \cap a_j = \{b_0, b_1\}$ for each $i \neq j$ and
                \item\label{def:nonseparating:2}
                $S - (a_0 \cup \cdots \cup a_p)$ is connected.
            \end{enumerate}
          \end{definition}

For convenience, we will add a superscript $\mathcal
B^\nu(S, b_0, b_1)$ to the notation of the complex, with 
        $\nu=1$ indicating that $b_0, b_1$ lie on the same boundary component and
        $\nu=2$ indicating that they do not. 

        \smallskip

        Note that the orientation of the surface defines orderings of the arcs $a_0,\dots,a_p$ representing a simplex at both $b_0$ and $b_1$.

\begin{definition}\label{def:disord}
    Let $(S, b_0, b_1)$ be as before.
    The \emph{disordered arc complex} is
        the subcomplex
        $
            \mathcal D^\nu(S_{g, r}, b_0, b_1)
            \subseteq
            \mathcal B^\nu(S, b_0, b_1)
        $
        consisting of those simplices $\sigma$
        that admit arc representatives $a_0, \ldots, a_p$, again subject to
            \cref{def:nonseparating:1},
            \cref{def:nonseparating:2}, satisfying in addition
            \begin{itemize}
                \item[(c)] the ordering of the arcs at $b_0$ agrees with the ordering of the arcs at $b_1$. 
%                If $a_0 < \ldots < a_p$ with respect to
%                    the clockwise ordering of the germs of arcs at $b_0$,
%                    then also $a_0 < \ldots < a_p$ with respect to
%                        the clockwise ordering of the germs of arcs at $b_1$.
            \end{itemize}
\end{definition}

The name ``disordered'' was chosen to contrast with the pre-existing {\em ordered arc complex} used by Ivanov \cite{Iva89} in the case $\nu=1$ and Randall-Williams \cite{RW16} in their proofs of homological stability for the mapping class group of surfaces; the ``ordered'' version is also a subcomplex of the $\mathcal B^\nu(S, b_0, b_1)$, but with the requirement that the order of the arcs at $b_1$ is reversed compared to the order at $b_0$. 
Fixing an ordering condition has the effect that the action of the mapping
class group is transitive on the set of $p$-simplices for every $p$,
see \cite[Lem 3.2]{harer85}. The ordered and disordered arc complexes  represent  the two extremes of how fast the genus of the surface decreases when cutting along larger and larger simplices: for the ordered arc complex, the genus goes down as fast as possible, essentially every time one removes an arc, while for the disordered arc complex, the genus goes down as slow as possible, only every other time:

\begin{proposition}\label{prop:type-of-cut-surface}
  For a $p$--simplex  $\sigma = \langle a_0,\dots ,a_p\rangle\in\mathcal D^\nu (S_{g,r},b_0,b_1)$, 
  the surface $S_{g,r}\setminus \sigma$ obtained by removing a tubular neighborhood of $a_i$ for each $i$ has genus $g'$ with $r'$ boundary components for 
   $$  g' = g-\floor{\frac{p+3-\nu} 2} \ \ \ \textrm{and}\ \ \ 
  r' =
  \begin{cases}
    r+(-1)^\nu,&\text{if } p \text{ is even,} \\
    r&\text{else}. 
  \end{cases}
  $$
\end{proposition}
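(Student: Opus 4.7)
The plan is to induct on $p$, using an Euler-characteristic bookkeeping to reduce each step to the determination of one of $g'$ or $r'$, and then pinning down the remaining quantity by a boundary-walk analysis.

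First, I compute $\chi(S\setminus\sigma)$ via a simple CW argument. With $b_0, b_1$ as $0$-cells and each $a_i$ as a $1$-cell, cutting along $\sigma$ duplicates each $1$-cell and subdivides each $b_j$ into $p+2$ corner copies (the $p+1$ arcs incident at $b_j$ cut its half-disk neighbourhood into $p+2$ wedges). The net change is $2(p+1)-(p+1) = p+1$, so by connectedness of $S\setminus\sigma$ (guaranteed by the joint non-separating hypothesis)
\[
2g' + r' \;=\; 2g + r - (p+1).
\]
Hence once $r'$ is known, $g'$ is determined.

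The base case $p=0$ records the standard effect of cutting a single non-separating arc: if $\nu=1$ the boundary component carrying $b_0, b_1$ splits in two, and if $\nu=2$ the two components merge. For the inductive step I set $\sigma' = \langle a_0, \dots, a_{p-1}\rangle$ --- this is again a disordered simplex, since all three defining conditions are inherited by faces --- and apply the inductive hypothesis to $\sigma'$. It then remains only to decide whether the endpoints of $a_p$ in $S\setminus\sigma'$ lie on the same or on different boundary components, equivalently whether cutting $a_p$ invokes the $\nu=1$ or $\nu=2$ case of the base formula. To resolve this, I label the corner copies of $b_0$ in $S\setminus\sigma'$ as $b_0^{(0)}, \dots, b_0^{(p)}$ in the left-to-right order induced by the orientation of $\partial S$, and similarly for $b_1$; the disordered condition identifies these two labellings with the same ordering of the arcs. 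A careful reading of the orientation forces the two sides of each $a_i$ to realise the shift-by-one identifications
\[
a_i^L :\; b_0^{(i)} \longleftrightarrow b_1^{(i+1)}, \qquad a_i^R :\; b_0^{(i+1)} \longleftrightarrow b_1^{(i)},
\]
whereas the ordered arc complex would instead give the identity (no-shift) rule. Tracing the resulting boundary cycles, together with the one or two pieces of $\partial S$ carrying $b_0, b_1$, shows that $b_0^{(p)}$ and $b_1^{(p)}$ lie on a common boundary component of $S\setminus\sigma'$ exactly when the parities of $p$ and $\nu$ match in the appropriate way, and feeding this alternating pattern into the base case yields the formulas stated in the proposition.

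The main obstacle is the orientation bookkeeping needed to establish the shift-by-one rule: the asymmetry between the L-wedge at $b_0$ and the R-wedge at $b_1$ along the same physical side of an oriented arc is precisely what distinguishes the disordered setting from the ordered one, and it is easy to swap the identifications by a careless sign. Once this rule is in place, the induction reduces to arithmetic: summing the effects of the alternating sequence of $p+1$ type-$1$ and type-$2$ cuts produces $g' = g - \lfloor (p+3-\nu)/2\rfloor$ and the claimed $r'$.
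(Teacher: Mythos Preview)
Your approach is correct and follows the same strategy as the paper's proof: both use induction on $p$ together with the Euler characteristic relation $2g'+r' = 2g+r-(p+1)$ to reduce everything to determining $r'$. The paper's proof is much terser---it simply cites \cite[Prop~2.11 and Cor~2.15]{boldsen12}, specialized to the inversion permutation, for the inductive boundary-component count---whereas you work this out directly via your shift-by-one boundary-walk analysis. Your route has the virtue of being self-contained; on the other hand, the orientation bookkeeping you flag as ``the main obstacle'' and the final ``parities match in the appropriate way'' step are left as assertions rather than carried out, so the sketch would need those details filled in to stand on its own.
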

\begin{proof}
The number of boundary components $r'$ can be obtained by a direct
inductive computation, with the genus $g'$ then deduced using the
Euler characteristic.  The computation is a special case of \cite[Prop 2.11]{boldsen12}, applied to the case where the permutation $\alpha$ is the inversion $[p\,(p-1)\dots 0]$, once one computes that the genus $S(\alpha)$ of a neighborhood of the arcs is $\lfloor\frac{p+2-\nu}{2}\rfloor$, e.g.~using Corollary 2.15 of the same paper. 
\end{proof}

The complex   $\mathcal B^\nu(S, b_0, b_1)$ is known to be  $(2g + \nu - 3)$-connected. (This was first stated in \cite{harer85}; see \cite[Thm 3.2]{Wah08} or \cite[Thm 4.8]{wahl-handbk} for a complete proof.)  
We will here  use this fact to deduce that $\mathcal D^\nu(S_{g, r}, b_0, b_1)$ is also highly-connected.
While the ordered arc complex is $(g-2)$-connected \cite[Thm A.1]{RW16},
the following result shows that the disordered arc complex is only slope
$\frac{2}{3}$ connected with respect to the genus,
despite being $\sim 2g$-dimensional.

\begin{theorem}\label{thm:disord-cnt}
   The disordered arc complex $\mathcal D^\nu(S_{g,r}, b_0, b_1)$ is $\(\frac{2g +\nu - 5} 3\)$-connected.
\end{theorem}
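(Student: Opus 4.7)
The plan is to induct on the genus $g$, bootstrapping from the known $(2g+\nu-3)$-connectivity of the ambient complex $\mathcal B^\nu(S_{g,r},b_0,b_1)$ via a Hatcher--Vogtmann bad simplex argument, in the form formalized in, e.g., \cite[\S2.1]{wahl-handbk}. Given a simplicial map $f\colon S^k\to\mathcal D^\nu$ with $k\le\tfrac{2g+\nu-5}{3}$, the inequality $k\le 2g+\nu-3$ (valid in the range where the theorem is not vacuous) lets me extend $f$ to $F\colon D^{k+1}\to\mathcal B^\nu$, and the task becomes homotoping $F$ rel $\partial$ into $\mathcal D^\nu$.

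To each simplex $\sigma=\langle a_0,\dots,a_p\rangle\in\mathcal B^\nu\setminus\mathcal D^\nu$ I would associate the bad face $\beta(\sigma)\subseteq\sigma$ spanned by the arcs moved by the ordering-comparison permutation $\pi_\sigma\in\mathfrak S_{p+1}$ between $b_0$ and $b_1$. The bad simplex machinery then reduces the extension problem to showing that, for every $q$-simplex $\tau\in\mathcal D^\nu$, the link $\mathrm{Link}_{\mathcal D^\nu}(\tau)$ is $\bigl(\tfrac{2g+\nu-5}{3}-q-1\bigr)$-connected.

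For this link analysis, cutting along $\tau=\langle a_0,\dots,a_q\rangle$ yields a surface $S'$ of genus $g'=g-\lfloor\tfrac{q+3-\nu}{2}\rfloor$ by Proposition~\ref{prop:type-of-cut-surface}, with $b_0,b_1$ unfolding into $(q+2)$-tuples of boundary intervals. The disordered condition forces each new arc to connect the $i$-th slot at $b_0$ to the $i$-th slot at $b_1$, which decomposes $\mathrm{Link}_{\mathcal D^\nu}(\tau)$ as a join (or a subcomplex thereof cut by the joint-non-separating condition) of $q+2$ smaller disordered arc complexes on pieces of $S'$. Applying the inductive hypothesis to each factor and combining via join-connectivity, the required bound reduces to
$$
\tfrac{2g'+\nu'-5}{3}\ge\tfrac{2g+\nu-5}{3}-q-1,
$$
which follows from $2\lfloor\tfrac{q+3-\nu}{2}\rfloor\le q+3-\nu$ together with $\nu'\ge 1$.

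The main obstacle is the link identification: showing that $\mathrm{Link}_{\mathcal D^\nu}(\tau)$ really has the advertised join-like decomposition on the cut surface, with the correct $\nu'$ in each factor. This requires careful tracking of how the $(q+2)$ boundary intervals at $b_0$ and the $(q+2)$ at $b_1$ pair up along $S'$, with case analysis on the parity of $q$ and on $\nu\in\{1,2\}$, together with a separate control of the joint-non-separating condition. Once this geometric identification is in hand, the remaining arithmetic is routine, and the slope $\tfrac{2}{3}$ falls out cleanly from the interplay between $q+1$ (the connectivity drop required per link dimension) and $\lfloor (q+3-\nu)/2\rfloor$ (the genus drop produced by cutting along $\tau$).
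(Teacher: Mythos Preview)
Your setup and overall strategy (induct on $g$, extend through $\mathcal B^\nu$, then remove bad simplices) match the paper, and your bad-face assignment $\beta(\sigma)$ does satisfy the usual axioms. But the reduction step is misstated: the bad simplex machinery does \emph{not} reduce to showing that $\mathrm{Link}_{\mathcal D^\nu}(\tau)$ is highly connected for $\tau\in\mathcal D^\nu$. It reduces to showing that, for every simplex $\tau$ with $\beta(\tau)=\tau$ (hence $\tau\notin\mathcal D^\nu$, since its comparison permutation has no fixed points), the \emph{good link} $G_\tau=\{\sigma\in\mathrm{Lk}_{\mathcal B^\nu}(\tau):\beta(\sigma*\tau)=\tau\}$ is $\bigl(\tfrac{2g+\nu-5}{3}-\dim\tau-1\bigr)$-connected. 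You are analysing the wrong simplices.

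This error propagates through the rest of the sketch. Your appeal to Proposition~\ref{prop:type-of-cut-surface} for $g'=g-\lfloor(q+3-\nu)/2\rfloor$ is valid only for \emph{disordered} $\tau$; for a purely bad $\tau$ the genus of $S\setminus\tau$ can drop much faster (up to roughly $q$ rather than $q/2$), so the arithmetic you indicate does not survive. Likewise, the ``$i$-th slot at $b_0$ pairs with $i$-th slot at $b_1$'' picture behind your join decomposition presupposes $\pi_\tau=\mathrm{id}$, which is exactly what fails for bad $\tau$. The paper avoids these difficulties by choosing a more restrictive notion of badness (``regular bad'': the arc first at $b_0$ is not first at $b_1$); maximality then forces the image of the link in the triangulated disk into a \emph{single} disordered arc complex $\mathcal D^\mu(S\setminus\hat f(\sigma),b_0',b_1')$ at the first pair of cut points, and the crude bound $g'\ge g-p-1$ (or $g-p$ when $\nu=2$), valid for arbitrary non-separating collections, already gives the induction---with one delicate case ($\nu=1$, $p=1$) handled by observing that then $\mu=2$.
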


 To prove the result, we use essentially the same argument as the one given in \cite{RW16} in the ordered case. 

\begin{proof}
    Let $S = S_{g, r}$.   In the case $g = 0$,
        the statement for $\mathcal D^1(S)$ is vacuous,
        and for $\mathcal D^2(S)$ it states that the complex  is $(-1)$-connected, i.e.~nonempty, which holds as any arc in the surface connecting $b_0$ and $b_1$ defines a vertex in $\mathcal D^2(S)$. 
      We prove the remaining cases by induction on $g$.

    Let $g > 0$. 
    Suppose we are given 
        $
            f : \del D^{k + 1} \to \mathcal D^\nu(S, b_0, b_1)
        $
        for some $k \leq (2g + \nu - 5)/ 3$.
    We wish to exhibit a nullhomotopy of this map.
    Since $(2g + \nu - 5) / 3 \leq 2g + \nu - 3$, Theorem 3.2 in \cite{Wah08} 
     enables us to choose a map $\hat f$
        such that the outer diagram
        \begin{equation}\label{lifting-prob}
            \begin{tikzcd}
                \del D^{k + 1}
                \ar[d, hook]
                \ar[r, "f"]
                    & \mathcal D^\nu (S, b_0, b_1)
                    \ar[d, hook]
                \\
                D^{k+1}
                \ar[r, "\hat f"]
                \ar[ru, dotted]
                    & \mathcal B^\nu(S, b_0, b_1),
            \end{tikzcd}
        \end{equation}
        commutes.
    Using PL-approximation,
        we may assume that $\hat f$ and $f$ are simplicial
            with respect to some PL-triangulation of $D^{k+1}$.
    We will repeatedly replace $\hat f$ until the dotted arrow exists,
        thereby giving the desired nullhomotopy.

         Write $<_0$ and $<_1$ for the anti-clockwise orderings at $b_0$ and $b_1$.  We call a $p$-simplex $\sigma$ in $D^{k+1}$  \emph{regular bad} if 
        $\hat f(\sigma) = \<a_0, \cdots, a_{p'}\>$,
        indexed in such a way that $a_0 <_0 \dots <_0 a_{p'}$ are anticlockwise at $b_0$,
       and 
            there is $j > 0$ with $a_j <_1 a_0$  at $b_1$. Here $p'\le p$ is the dimension of the image simplex $\hat f(\s)$, and $p'\geq 1$ if $\sigma$ is regular bad. 
    This condition is \q{dense} in the sense
        that any simplex $\sigma$ in $D^{k+1}$ with image not included in $\mathcal D^\nu(S, b_0, b_1)$
            must contain a regular bad simplex as a face.
    Thus it suffices to give a procedure for exchanging $\hat f$ with a map
        having strictly fewer regular bad simplices, 
        while maintaining commutativity of the outer diagram~\cref{lifting-prob}.

    Let $\sigma$ be a regular bad simplex of $D^{k+1}$ of maximal dimension $p$ and consider its link $\Lk\sigma\subset D^{k+1}$.
  %  Write $p \defeq \dim\sigma \geq \dim\hat f(\sigma) \eqdef p'$.
    Maximality implies that $\hat f | _{\Lk \sigma}$ factors as
        \[\hat f | _{\Lk \sigma}\colon 
            \Lk \sigma
            \to
            \mathcal D^\mu(S \setminus \hat f(\sigma), b_0', b_1')
            \to
            \mathcal D^\nu(S,b_0,b_1)
           \subseteq \mathcal B^\nu(S, b_0,b_1), 
        \]
  where  $S \setminus \hat f (\sigma)$ is the closure of the surface obtained from $S$
        by cutting out the collection of arcs $\hat f(\sigma)$, and 
        $b_0'$ and $b_1'$ in $S \setminus \hat f (\sigma)$ are the first copies of $b_0$ and $b_1$ in the cut surfaces as depicted in Figure~\ref{fig:regularbad}.
        \begin{figure}
          \def\svgwidth{0.8\textwidth}
          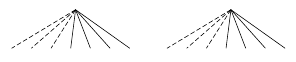
          \caption{Maximal regular bad simplex $\{a_0,\dots,a_p\}$ and simplex $\{a'_0,\dots,a'_q\}$ in its link.}\label{fig:regularbad}
        \end{figure}
Indeed, suppose that $\tau \in \Lk\sigma$ and write
        $
            \hat f(\tau) = \<a_0', \cdots, a_q'\>. 
        $
        If $a_0\le_0a_i'$ at $b_0$ for any $a_i'$, then the simplex $\sigma*\<a_i'\>$ is regular bad of a larger dimension, contradicting maximality. So we must have $a_i'<_0a_0$ for each $i$, i.e.~the arcs of $\tau$ are at $b_0'$ in the cut surface. Now we must also have that each $a_i'<_1a_0$ as otherwise $\sigma*\<a_i'\>$ would again be regular bad. 
 Finally,  maximality of $\s$ would also be contradicted if the orderings of the arcs $a_0', \cdots, a_q'$ does not agree at $b_0$ and $b_1$ as,  if $a_i'<_0 a_j'$ with $a_j'<_1a_i'$ for some $i,j$, then $\s*\<a'_i,a'_j\>$ would again be regular bad, of larger dimension.   
    Thus $\hat f(\tau)$ must be disordered
        and, after cutting the surface at the arcs of $\s$,  can be viewed as a simplex of
        $
            \mathcal D^\nu(S \setminus \hat f(\sigma), b_0', b_1'). 
            $

The link $\Lk(\s)$ is a simplicial sphere $S^{k-p}\subset D^{k+1}$.  We want to show that the map $\hat f|_{\Lk (\sigma)}$ extends to a simplicial map
            \begin{equation}\label{equ:F}
                F :
                D^{k - p + 1}
                \to
                \mathcal D^\mu(S - \hat f(\sigma), b_0', b_1')
                \to
                \mathcal D^\nu (S, b_0, b_1)
                \subseteq
                \mathcal B(S, b_0, b_1)
            \end{equation}
            for $D^{k - p + 1}$ a disk with some PL-structure extending that of $\Lk (\sigma)$.
This will follow if we can show that the complex
        $
            \mathcal D^\mu (S\setminus\hat f (\sigma ),b_0',b_1')
        $
        is at least $(k-p)$-connected. Note that necessarily have $g(S\setminus\hat f (\sigma )) < g$ as $f(\s)$ is a non-separating $p'$-simplex with $p'\ge 1$. Hence we can use our induction hypothesis.
        We consider the cases $\nu = 1$ and $\nu = 2$ separately.

\smallskip
        
    \para{Case 1: $\nu=1$}
  We have that 
        $
      g(S\setminus\hat f (\sigma ))\ge g-p'-1\ge g-p-1,
        $
as removing $p'+1$ arcs  reduces the genus by at most $p'+1\le p+1$.
 Hence by induction we have that $\mathcal D^\mu (S\setminus\hat f (\sigma )
    , b_0',b_1')$ is at least $(\frac{2(g-p-1)-4}{3})$-connected, using also that $\mu\ge 1$.
    If  $p\geq 2$, we have 
    $$k-p\leq \frac{2g-4}{3} - p = \frac{2g-3p-4}{3}  \leq \frac{2(g-p-1)-4}{3}.$$
    For $p=p'=1$, note that  $b_0',b_1'$ necessarily lie in different boundary components, so that $\mu=2$ in that case. (See Figure~\ref{fig:boundaries}.) Hence
  in that case  $\mathcal D^\mu (S\setminus\hat f (\sigma ), b_0',b_1')$ is  
    $(\frac{2(g-2)-3}{3})$-connected, and 
   $$k-1\leq \frac{2g-4}{3} - 1 = \frac{2g-7}{3} = \frac{2(g-2)-3}{3}.$$
   so we get the desired extension in both subcases.
   \begin{figure}
     \def\svgwidth{0.8\textwidth}
     %% Creator: Inkscape 1.2.1 (9c6d41e410, 2022-07-14, custom), www.inkscape.org
%% PDF/EPS/PS + LaTeX output extension by Johan Engelen, 2010
%% Accompanies image file '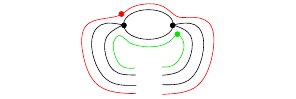' (pdf, eps, ps)
%%
%% To include the image in your LaTeX document, write
%%   \input{<filename>.pdf_tex}
%%  instead of
%%   \includegraphics{<filename>.pdf}
%% To scale the image, write
%%   \def\svgwidth{<desired width>}
%%   \input{<filename>.pdf_tex}
%%  instead of
%%   \includegraphics[width=<desired width>]{<filename>.pdf}
%%
%% Images with a different path to the parent latex file can
%% be accessed with the `import' package (which may need to be
%% installed) using
%%   \usepackage{import}
%% in the preamble, and then including the image with
%%   \import{<path to file>}{<filename>.pdf_tex}
%% Alternatively, one can specify
%%   \graphicspath{{<path to file>/}}
%% 
%% For more information, please see info/svg-inkscape on CTAN:
%%   http://tug.ctan.org/tex-archive/info/svg-inkscape
%%
\begingroup%
  \makeatletter%
  \providecommand\color[2][]{%
    \errmessage{(Inkscape) Color is used for the text in Inkscape, but the package 'color.sty' is not loaded}%
    \renewcommand\color[2][]{}%
  }%
  \providecommand\transparent[1]{%
    \errmessage{(Inkscape) Transparency is used (non-zero) for the text in Inkscape, but the package 'transparent.sty' is not loaded}%
    \renewcommand\transparent[1]{}%
  }%
  \providecommand\rotatebox[2]{#2}%
  \newcommand*\fsize{\dimexpr\f@size pt\relax}%
  \newcommand*\lineheight[1]{\fontsize{\fsize}{#1\fsize}\selectfont}%
  \ifx\svgwidth\undefined%
    \setlength{\unitlength}{85.03937008bp}%
    \ifx\svgscale\undefined%
      \relax%
    \else%
      \setlength{\unitlength}{\unitlength * \real{\svgscale}}%
    \fi%
  \else%
    \setlength{\unitlength}{\svgwidth}%
  \fi%
  \global\let\svgwidth\undefined%
  \global\let\svgscale\undefined%
  \makeatother%
  \begin{picture}(1,0.33333333)%
    \lineheight{1}%
    \setlength\tabcolsep{0pt}%
    \put(0,0){\includegraphics[width=\unitlength,page=1]{nu-one-reg-bad-1-spx.pdf}}%
    \put(0.49023366,0.06144486){\makebox(0,0)[lt]{\lineheight{1.25}\smash{\begin{tabular}[t]{l}$\dots$\end{tabular}}}}%
    \put(0.39985635,0.31132726){\makebox(0,0)[lt]{\lineheight{1.25}\smash{\begin{tabular}[t]{l}$b_0'$\end{tabular}}}}%
    \put(0.43658763,0.24064943){\makebox(0,0)[lt]{\lineheight{1.25}\smash{\begin{tabular}[t]{l}$b_0$\end{tabular}}}}%
    \put(0.54045272,0.24064943){\makebox(0,0)[lt]{\lineheight{1.25}\smash{\begin{tabular}[t]{l}$b_1$\end{tabular}}}}%
    \put(0.58632749,0.17284309){\makebox(0,0)[lt]{\lineheight{1.25}\smash{\begin{tabular}[t]{l}$b_1'$\end{tabular}}}}%
  \end{picture}%
\endgroup%

     \caption{Regular bad 1-simplex with $\nu=1$.}\label{fig:boundaries}
     \end{figure}

   \smallskip
    
    \para{Case 2: $\nu=2$} The fact that $b_0,b_1$ lie in different
    components implies that
    $$
    g(S\setminus\hat f (\sigma )) \geq g-p'\geq g-p
    $$
   as cutting along the first arc has no effect on the genus.  Hence induction here gives that $\mathcal D^\mu (S\setminus\hat f (\sigma ), b_0',b_1')$ is at least  $(\frac{2(g-p)-4}{3})$-connected.
   Now for all $p\geq 1$,
   $$k-p\leq \frac{2g-4}{3} - p = \frac{2g-3p-4}{3}  \leq \frac{2(g-p)-4}{3}$$
yielding the desired connectivity. 

\medskip

We will use the map $F$ of \eqref{equ:F} to modify $\hat f$ in the star $\f{St}(\sigma)$. 
For this purpose, note that as simplicial subcomplexes of $D^{k+1}$, 
    \begin{align*}
      \f{St}(\sigma) &= \sigma * \f{Lk}(\sigma), \quad \\
      \del \f{St}(\sigma) &= \del \sigma * \f{Lk}(\sigma).
    \end{align*}
 In particular, we get an identification $\del(\del \sigma * D^{k - p + 1}) \cong \del \f{St}(\sigma)$ for $D^{k-p+1}$ the simplicial disk that is the source of the map $F$ above.

  We replace $\hat f\, \vert\, _{\f{St}(\sigma)}$ by the unique simplicial map
    \[
      \hat f * F : \del \sigma * D^{k - p + 1}
      \to
      \mathcal B^\nu(S, b_0, b_1). 
    \]
    It remains to show that this has improved the situation.
    Indeed, suppose that  $\tau=\tau_0*\tau_1$ is a regular bad simplex in $\del \sigma * D^{k - p + 1}$. By construction, $\tau_1$ has image in
    $\mathcal D^\mu(S \setminus\hat f(\sigma), b_0', b_1') \subset    \mathcal D^\nu (S, b_0, b_1)$, so the ordering of the arcs of $\tau$ at $b_0$ and $b_1$ starts with the arcs of $\tau_1$, all in anti-clockwise order. Hence, if $\tau$ is regular bad, we must have $\tau=\tau_0$ is a strict face of $\s$. In particular, no new regular bad simplices have been added. As the simplex $\s$ has been removed, we have thus reduced the total  number of regular bad simplices in the disk. 
   Repeating this procedure,
        we will after finitely many stages
        remove every regular bad simplex,
        thus making the dashed arrow exist, which proves the result. 
      \end{proof}
      
\begin{remark}\label{rem:optimal1}
  The connectivity estimate above can be shown to be optimal
  in certain low-genus examples, corresponding to known computations
  of the unstable homology of mapping class groups.
  Indeed, $\mathcal D^2(S_{1,r})$ is disconnected.
  To see this, consider the spectral sequence associated
  to the action of the mapping class group $\Gamma (S_{1,r})$ on
  the simplicial complex $\mathcal D^2(S_{1,r})$.
  This is the spectral sequence arising from the vertical filtration
  of the double complex $\Z\mathcal D^2(S_{1,r})_\bullet\otimes_{\Gamma (S_{1,r})}
  F_\bullet$,
  where $F_\bullet\to\Z$ is a free resolution
  of the trivial $\Gamma (S_{1,r})$-module.
  By a standard argument using Shapiro's lemma (see e.g.~\cite[Thm
  5.1]{HatWah10} or \cite[Sec 1]{HatVog17}), one finds that
  the first page of this spectral sequence is given by
  $$
  E^1_{p,q} \cong
  \begin{cases}
    \widetilde H_q(\Gamma (S_{1,r})) &\text{if } p = -1,\\
    \widetilde H_q(\Gamma (S_{1,r-1})) &\text{if } p = 0,\\
    \widetilde H_q(\Gamma (S_{0,r})) &\text{if } p = 1,\\
    \widetilde H_q(\Gamma (S_{0,r-1})) &\text{if } p = 2,\\
    0 &\text{otherwise.}
  \end{cases}
  $$
  Assume for contradiction that $\mathcal D^2(S_{1,r})$ is connected.
  Then an analysis of the horizontal filtration of the double complex
  $\Z\mathcal D^2(S_{1,r})_\bullet\otimes_{\Gamma (S_{1,r})}F_\bullet$
  shows that $E^\infty_{p,q} = 0$ for $p+q\leq 0$,
  so the differential $d^1\colon H_1(\Gamma (S_{1,r-1}))\to H_1(\Gamma (S_{1,r}))$
  must be surjective. This contradicts the fact that
  $H_1(\Gamma (S_{1,s}))\cong\Z^s$ for $s\geq 1$ (see \cite[Thm
  5.1]{korkmaz}).
  Hence it is not true that $\mathcal D^\nu$ is
  $\left(\frac{2g+\nu -4} 3\right)$-connected when $\nu = 2$. 
  
  Similarly, one finds that $H_1(\mathcal D^1(S_{3,r}))\neq 0$
  by considering the spectral sequence associated to the action
  of $\Gamma (S_{3,r})$ on $\mathcal D^1(S_{2,r+1})$ and noting that
  the differential $d^1\colon H_1(\Gamma (S_{2,r+1}))\to H_1(\Gamma (S_{3,r}))$
  cannot be injective since the source identifies with $\Z / 10\Z $
  and the target is zero (see \cite[Thm 5.1]{korkmaz}).
  Thus $\mathcal D^\nu$ fails to be
  $\left(\frac{2g+\nu -4} 3\right)$-connected when $\nu = 1$ also.

  Note that these low dimensional computations also show that the
  first and last ranges in
  Theorem~\ref{thmintro:stab1} cannot be improved by a constant. 
%
%\begin{remark}\label{rem:optimal2}[Genus stability]
%It follows from Theorem~\ref{thmintro:stab1} that the genus stabilization 
%\begin{equation*}
%  H_i(\Gamma(S_{g,r}^s);\Z )\to H_i(\Gamma(S^s_{g+1,r}); \Z) 
%\end{equation*}
%is an  epimorphism when $i\leq\frac{2g} 3$
%and an  isomorphism when $i\leq\frac{2g-2} 3$.
%The slope $\frac{2}{3}$ is known to be optimal by a computation of
%Morita \cite{Morita}, and
%the genus isomorphism range we obtain is optimal
%since for instance $H_1(\Gamma (S_{2,r});\Z )\to H_1(\Gamma (S_{3,r});\Z )$
%is not injective as the source is isomorphic to $\Z /12$ and
%the target is trivial, see e.g.~\cite[Theorem 5.1]{korkmaz}.
%Our epimorphism range, on the other hand, falls short of the one given in
%\cite{GKRW19MCG}, which proves that the genus stabilization is an epimorphism
%for $i\leq\frac{2g+1} 3$, a range that is optimal by Morita's computation
%(see Theorem B (i) of \cite{GKRW19MCG}).
\end{remark}

\section{The monoidal category of bidecorated surfaces}
\label{sec:category}
In this section, we describe a monoidal groupoid $(\M ,\hash )$ of surfaces
decorated by two intervals in their boundary, where the monoidal
structure glues the intervals in pairs. 
We show that this groupoid is a module over the braided monoidal
groupoid $\braidGrpd$ of braid groups, giving, on classifying spaces,
the structure of an $E_1$-module over an $E_2$-algebra in the sense of \cite{krannich19}.

\subsection{Bidecorated surfaces and the monoidal structure}\label{sec:bidecorated}

The groupoid $\M$ has objects {\em bidecorated surfaces}, that are, informally, surfaces with two intervals marked in their boundary. To give a precise definition of the objects that is convenient for the
monoidal structure, we start by 
 constructing a special sequence of bidecorated
 surfaces $X_n$, built out of disks, and defined inductively.
 
Let $X_1 = D^2\subset\C$ denote the unit disk in the complex plane,
and define the embeddings $\iota_{1}^0,\iota_{1}^1\colon I\to X_1$  by
$$
\iota_{1}^0(t) = e^{i(\pi /4 + t\pi /2)}\quad\text{and}\quad
\iota_{1}^1(t) = e^{i(5\pi /4 + t\pi /2)}.
$$
We denote by $\overline{\iota_{1}^i}\colon I\to X_1$  the reversed map
$t\mapsto \iota_{1}^i(1-t)$ for $i=0,1$.

Recursively, suppose we have defined $(X_m,\iota^0_{m},\iota_{m}^{1})$ for some $m\geq 1$.
We construct $X_{m+1}$  from $X_m$ by gluing an additional disk along
two half intervals, with new markings $\iota_{m+1}^0,\iota_{m+1}^1$
 coming from the first half of the markings of $X_m$ and
the second half of the markings of the attached disk:

$$  X_{m+1}
\defeq
     \frac{
      X_m \sqcup X_1
      }{
      \substack{
      \iota^i_m(t) \sim \overline{\iota^i_1}(t), \ t\in{[\rfrac12,1]},
    }
    }
\ \ \ \textrm{with}\ \ \ 
\iota_{m+1}^i(t)
=
\begin{cases}
  \iota_{m}^i(t),  & \text{if } t\leq 1/2,\\
  \iota_{1}^i(t),  & \text{else.}
\end{cases}
   $$
for $i=0,1$.
 Note that the marked intervals in the boundary of $X_m$ might live in different boundary components (in fact this will happen every other time). Figure~\ref{fig:glueD} shows what happens when a disk is glued to a surface in the above described manner, in each of these two possible cases.

\begin{figure}
  \def\svgwidth{0.8\textwidth}
  %% Creator: Inkscape 1.2.1 (9c6d41e410, 2022-07-14, custom), www.inkscape.org
%% PDF/EPS/PS + LaTeX output extension by Johan Engelen, 2010
%% Accompanies image file '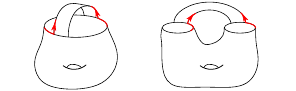' (pdf, eps, ps)
%%
%% To include the image in your LaTeX document, write
%%   \input{<filename>.pdf_tex}
%%  instead of
%%   \includegraphics{<filename>.pdf}
%% To scale the image, write
%%   \def\svgwidth{<desired width>}
%%   \input{<filename>.pdf_tex}
%%  instead of
%%   \includegraphics[width=<desired width>]{<filename>.pdf}
%%
%% Images with a different path to the parent latex file can
%% be accessed with the `import' package (which may need to be
%% installed) using
%%   \usepackage{import}
%% in the preamble, and then including the image with
%%   \import{<path to file>}{<filename>.pdf_tex}
%% Alternatively, one can specify
%%   \graphicspath{{<path to file>/}}
%% 
%% For more information, please see info/svg-inkscape on CTAN:
%%   http://tug.ctan.org/tex-archive/info/svg-inkscape
%%
\begingroup%
  \makeatletter%
  \providecommand\color[2][]{%
    \errmessage{(Inkscape) Color is used for the text in Inkscape, but the package 'color.sty' is not loaded}%
    \renewcommand\color[2][]{}%
  }%
  \providecommand\transparent[1]{%
    \errmessage{(Inkscape) Transparency is used (non-zero) for the text in Inkscape, but the package 'transparent.sty' is not loaded}%
    \renewcommand\transparent[1]{}%
  }%
  \providecommand\rotatebox[2]{#2}%
  \newcommand*\fsize{\dimexpr\f@size pt\relax}%
  \newcommand*\lineheight[1]{\fontsize{\fsize}{#1\fsize}\selectfont}%
  \ifx\svgwidth\undefined%
    \setlength{\unitlength}{85.03937008bp}%
    \ifx\svgscale\undefined%
      \relax%
    \else%
      \setlength{\unitlength}{\unitlength * \real{\svgscale}}%
    \fi%
  \else%
    \setlength{\unitlength}{\svgwidth}%
  \fi%
  \global\let\svgwidth\undefined%
  \global\let\svgscale\undefined%
  \makeatother%
  \begin{picture}(1,0.30666666)%
    \lineheight{1}%
    \setlength\tabcolsep{0pt}%
    \put(0,0){\includegraphics[width=\unitlength,page=1]{gluing_disc.pdf}}%
    \put(0.16356215,0.11068636){\makebox(0,0)[lt]{\lineheight{1.25}\smash{\begin{tabular}[t]{l}$S$\end{tabular}}}}%
    \put(0.21903087,0.26593675){\makebox(0,0)[lt]{\lineheight{1.25}\smash{\begin{tabular}[t]{l}$D^2$\end{tabular}}}}%
    \put(0.64357084,0.11068795){\makebox(0,0)[lt]{\lineheight{1.25}\smash{\begin{tabular}[t]{l}$S$\end{tabular}}}}%
    \put(0.62661748,0.25895329){\makebox(0,0)[lt]{\lineheight{1.25}\smash{\begin{tabular}[t]{l}$D^2$\end{tabular}}}}%
  \end{picture}%
\endgroup%

  \caption{Gluing a disk $X_1 = D^2$
    to a bidecorated surface $S$ }\label{fig:glueD}
\end{figure}

\begin{lemma}\label{lem:surface-type}
  Let $m \geq 1$.
  Then $X_m\cong S_{g,r}$ is a surface of genus $g$ with $r$ boundary components, where 
  \[
   (g,r)   =
    \begin{cases}
      (\frac{m}{2} - 1, 2), & \text{if $m$ is even},
      \\
      (\frac{m - 1}{ 2}, 1), & \text{if $m$ is odd}.
    \end{cases}
  \]
\end{lemma}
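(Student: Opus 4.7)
I would argue by induction on $m$, with a slightly strengthened hypothesis: in addition to $X_m \cong S_{g,r}$ with the stated values of $(g,r)$, I would also track that the two marked intervals $\iota_m^0,\iota_m^1$ lie in the same boundary component precisely when $m$ is odd (i.e.\ when $r=1$). This auxiliary claim is essential because the case analysis in the inductive step depends on whether the attaching arcs land on one boundary circle or on two.

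The base case $m=1$ is immediate: $X_1 = D^2 \cong S_{0,1}$ and both markings are on the single boundary circle. For the inductive step I would compute the change in two invariants separately: the Euler characteristic and the number of boundary components. Since $X_{m+1}$ is obtained by gluing $X_m$ and a disk along two disjoint half-intervals, inclusion--exclusion gives
\[
\chi(X_{m+1}) \;=\; \chi(X_m) + \chi(D^2) - 2 \;=\; \chi(X_m) - 1,
\]
so a quick check shows $\chi(X_m) = 2-m$ matches $2-2g-r$ for the proposed $(g,r)$ at each step.

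To pin down $r$ and hence $g$, I would split into the two cases allowed by the inductive hypothesis. If $m$ is odd, the two attaching arcs on $\partial X_m$ lie in the same boundary circle; removing them cuts that circle into two arcs, and these get capped off by the two remaining boundary arcs of the glued disk, producing two boundary components in $X_{m+1}$. If $m$ is even, the two attaching arcs lie on different boundary circles; the glued disk then connects them into a single boundary circle in $X_{m+1}$. In both cases $r$ changes in accordance with the claimed parity, and the formulas $g=\lfloor (m-1)/2 \rfloor$ and the corresponding $r$ are then forced by $\chi(X_{m+1}) = 2 - 2g - r$.

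Finally, I would verify the strengthened hypothesis for $m+1$: by the recursive definition, the new marked interval $\iota_{m+1}^i$ begins on the boundary component of $X_m$ containing $\iota_m^i$ (using the $t \leq 1/2$ half) and extends onto the attached disk. A direct inspection of the two cases above (illustrated by Figure~\ref{fig:glueD}) shows that in $X_{m+1}$ the two markings end up on the same boundary component iff $m+1$ is odd. The only step requiring some care is the boundary-component bookkeeping in each gluing case, but this is a standard cut-and-paste argument and the figure already covers both configurations.
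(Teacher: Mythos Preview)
Your proposal is correct and follows essentially the same approach as the paper: both compute the Euler characteristic recursively as $\chi(X_{m+1}) = \chi(X_m) - 1$, and both determine the number of boundary components by tracking whether the two marked intervals lie on the same or on different boundary circles (the paper handles this via the observation illustrated in Figure~\ref{fig:glueD}, which is exactly your strengthened inductive hypothesis). The only minor point the paper makes explicit and you leave implicit is connectedness of $X_m$, but this is immediate from the construction.
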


\begin{proof}
  Note first that $X_m$ is a connected surface for each $m$, since $X_1$ is a disk and $X_m$ is obtained from $X_1$ by successively adding disks (or strips), attached along two disjoint intervals in the boundary. 
For the same reason, we get that the Euler characteristic of $X_m$ is   
  $$
  \chi (X_{m+1}) = \chi (X_m) - 1  = \dots =  1 - m. 
  $$
By the classification of surfaces, we are left to compute the number of boundary components of $X_m$. 
For this, observing Figure~\ref{fig:glueD}, we notice that if we glue a disk along two intervals of $S$ that lie in the same boundary component, the new marked intervals given by the above procedure will give new intervals in different boundary components and vice versa, and no boundary component without marked intervals are ever created. It follows that the number of boundary components of $X_m$ alternates between $1$ and $2$. The result follows.  
\end{proof}

We are now ready to define the objects of the groupoid $\M$. We will use the boundary of the above defined surfaces $X_m$ to parametrize the boundary components of the surfaces that contain the marked intervals, to allow us to work with parametrized boundary components instead of parametrized arcs, in order to simplify some definitions. 
\begin{definition}\label{def:bidec-surfaces}
  A \emph{bidecorated surface} is a tuple $(S,m,\varphi)$
  where $S$ is a surface,
  $m\ge 1$ is an integer, and 
  $$
  \varphi\colon \del X_m\sqcup (\sqcup_kS^1) \xrightarrow\sim \del S
  $$
  is a homeomorphism, giving a parametrization of the boundary of $S$. 
  We think of $(S,m,\varphi)$ as a surface with two parametrized arcs
  $$
  I_0 \defeq \varphi\circ\iota_{m}^0\quad\text{and}\quad
  I_1 \defeq \varphi\circ\iota_{m}^1
  $$
  in its boundary, and $k$ additional parametrized boundaries. The
  surface $S$ may also have punctures. 
 \end{definition}
  The monoidal groupoid 
  $(\M, \hash,U)$ has objects the bidecorated surfaces together with
  a formal unit $U$.
  There are no morphisms between two bidecorated surfaces
  $(S,m ,\varphi)$ and $(S',m',\varphi ')$ unless $S$ and $S'$
  are homeomorphic and $m=m'$,
  in which case we define the set of morphisms to be all the mapping
  classes of homeomorphisms
  that preserve the boundary parametrizations
  \begin{multline*}
    \Hom_\M((S,m,\varphi), (S',m,\varphi '))
    \defeq
        \pi_0\Homeo_{\del}(S,S') = \pi_0\lbrace f\in\Homeo (S,S')\mid f\circ\varphi = \varphi '\rbrace, 
      \end{multline*}
      where $\Homeo (S,S' )$ is endowed with the compact-open topology,
      and $\Homeo_{\del}(S,S')$  with  the subspace topology.  
 The only morphism involving the unit $U$
 is the identity $\text{id}_U$.

 \begin{remark}
Our definition of the morphisms in the category $\M$ is such that
punctures in a surfaces $S$ can be permuted by automorphisms of $S$ in
$\M$. Our argument works just as well with labeled punctures, that are
not permutable by homeomorphims, or both labeled and unlabeled
punctures, just like we could also have additional boundary components
that are  only marked up to a permutation. The only changes this would
cause to the
argument would be that it would make the notations and conventions more
cumbersome.  
   \end{remark}
 
  The monoidal structure $\hash$ is defined as follows. 
  The object $U$ is by definition a unit for $\hash$.
  For the remaining objects, the monoidal product $\hash$
  is defined by
  \begin{align*}
    (S,m,\varphi)\hash (S',m',\varphi ')
\defeq
      \(\frac{
      S \sqcup S'
      }{
      \substack{
      I_i(t) \sim \overline{I_i'}(t), t\in {[\rfrac12,1]},
    }
    },m+m',\varphi\hash\varphi ' \),
  \end{align*}
 where $i=0,1$, and where 
  $$
  \varphi\hash\varphi '
  \colon \del X_{m+m'}\sqcup (\sqcup_{k+k'}S^1) \ \inc\ \del (S\hash S'),
  $$
  is obtained using the canonical identification $\del X_{m+m'}\cong
  (\del X_n\backslash \iota_m(\frac{1}{2},1))\cup (\del
  X_{m'}\backslash \iota_{m'}(0,\frac{1}{2}))$. 
  On morphisms, the monoidal product is given by juxtaposition.

  \medskip
  
The monoidal category $\M$ has the following  {\em injectivity
  property} with respect to gluing a disk, that will be useful in the
proof of our stability result. 

\begin{proposition}\label{prop:Mmonoid}
  For any object $S=(S,m,\varphi)$ of $\M$, and any $p\ge 0$, 
  the map
  \[
    \Aut_\M(S)
    \tox{\hash D^{\hash{p+1}}}
    \Aut_\M(S \hash D^{\hash {p+1}})
  \]
  is injective, where $D=(X_1,1,\id)$ is our chosen disk.  
\end{proposition}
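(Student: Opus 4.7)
The plan is to reduce to the case $p=0$ and then prove injectivity of a single disk stabilization via a cutting-along-an-arc argument.

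Since $\hash D^{\hash p+1}$ factors as the composition of single stabilizations
\[
\Aut_\M(S)\xrightarrow{\hash D}\Aut_\M(S\hash D)\xrightarrow{\hash D}\cdots\xrightarrow{\hash D}\Aut_\M(S\hash D^{\hash p+1}),
\]
it suffices to prove injectivity of $\hash D\colon\Aut_\M(S)\to\Aut_\M(S\hash D)$ for every bidecorated surface $S$. The degenerate cases where $\Aut_\M(S)$ is trivial (for instance $S=U$ or $S=D$) are immediate, so we may assume $S$ is a non-trivial connected surface.

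Write $\Sigma = S\hash D$, and pick a properly embedded arc $\alpha\subset D\subset\Sigma$ whose endpoints lie on the free boundary arcs $\iota_1^0([\rfrac12,1])$ and $\iota_1^1([\rfrac12,1])$ of $D$, chosen so that $\alpha$ separates the two gluing arcs of $D$ inside $D$. Cutting $\Sigma$ along $\alpha$ yields a surface $\widetilde\Sigma$ obtained from $S$ by attaching a half-disk along each of its two gluing arcs; collapsing these half-disks back onto the gluing arcs provides a canonical homeomorphism $\widetilde\Sigma\cong S$ compatible with the boundary parametrizations. Under this identification, the stabilization map $\hash D$ is precisely the ``glue along $\alpha$'' map from $\Aut_\M(S)\cong\Ga(\widetilde\Sigma)$ to $\Aut_\M(\Sigma)$, because extending an automorphism of $S$ by the identity on $D$ in particular fixes $\alpha$ pointwise.

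The key input is the arc lemma: for any essential properly embedded arc $\alpha$ in an orientable surface $\Sigma$, the path component of $\alpha$ in the space of proper embeddings of $\alpha$ into $\Sigma$ with fixed endpoints is contractible. Applying this to the Birman-type fibration $\Homeo_\del(\Sigma)\to\f{Emb}(\alpha,\Sigma;\del\alpha)$, whose fiber is the subspace of homeomorphisms fixing $\del\Sigma\cup\alpha$ pointwise, the resulting long exact sequence in homotopy yields an injection
\[
\Ga(\widetilde\Sigma)\ \hookrightarrow\ \Aut_\M(\Sigma)
\]
whose image is the stabilizer of the isotopy class of $\alpha$. Composing with $\Aut_\M(S)\cong\Ga(\widetilde\Sigma)$ gives the desired injectivity of $\hash D$.

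The main obstacle is the verification that $\alpha$ is essential. This follows from non-separation: since $S$ is connected, the two components of $D\setminus\alpha$ are each joined to $S$ through their respective gluing arcs, so $\Sigma\setminus\alpha$ is connected. Thus $\alpha$ is non-separating, hence cannot be boundary-parallel, hence is essential; this completes the argument.
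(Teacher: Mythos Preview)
Your argument is correct and follows essentially the same strategy as the paper: both use a restriction fibration from the automorphism group to an embedding space and invoke Gramain's result that components of arc-embedding spaces in surfaces are contractible, so that $\pi_1$ of the base vanishes and $\pi_0$ of the fibre injects. The only organisational difference is that you first reduce to $p=0$ and use a single arc $\alpha$, whereas the paper restricts to the embedding of all of $X_{p+1}$ at once and then argues inductively (arc by arc) that the base has contractible components; these amount to the same thing. One technical point the paper addresses that you gloss over: Gramain's theorem and the fibration property are most cleanly stated for $\Diff$ rather than $\Homeo$, so the paper first passes to smooth representatives (using $\pi_0\Homeo_\partial\cong\pi_0\Diff_\partial$ for compact surfaces) before running the fibration argument---you should do the same, or else justify the topological versions.
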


\begin{proof} Recall that the underlying surface of $D^{\hash p+1}$ is the surface  $X_{p+1}$ defined above. 
  Picking a smooth representative of the underlying surface of $S\hash
  X_{p+1}$, with $S$ a smooth subsurface in its interior, we can model
  the map in the statement using the description of the mapping class
  group of surfaces in terms of isotopy classes of diffeomorphisms
  rather than homeomorphisms. (See e.g., \cite[Thm 1.2]{boldsen2009}
  for a detailed account of the classical isomorphism
  $\pi_0\Homeo_{\del}(S)\cong \pi_0\Diff_\del(S)$
  when $S$ is compact.)
  Now the result follows by essentially the same argument as 
 the case of  attaching of surface along a single arc instead of two, as treated in \cite[Prop 5.18]{RWW17}, using the fibration
 \begin{align*}
  \Diff(S\hash X_{p+1} \ \textrm{rel}\ \del S\cup X_{p+1}) &\rar
\Diff(S\hash X_{p+1} \ \textrm{rel}\ \del (S\hash X_{p+1}))\\
   &\rar \text{Emb}((X_{p+1},I_0|_{[\frac{1}{2},1]}\cup I_1|_{[\frac{1}{2},1]}),(S\hash X_{p+1},I_0|_{[\frac{1}{2},1]}\cup I_1|_{[\frac{1}{2},1]}))
 \end{align*}
 where the fiber identifies with $\Diff(S\ \textrm{rel}\ \del_0S)$ and where we note that $I_0|_{[\frac{1}{2},1]}\cup I_1|_{[\frac{1}{2},1]}=\del X_{p+1}\cap \del(S\hash X_{p+1})$.
 Injectivity of the first map on $\pi_0$ follows if we can show that the base is simply-connected. 
 In fact the base can be shown inductively to have contractible components, using that $X_{p+1}$ is built inductively by attaching disks along two intervals, or homotopically attaching arcs, and using the contractibility of the components of embeddings of arcs in a surface, as proved in \cite[Thm 5]{Gramain}. 
  \end{proof}

  \subsection{Braided action}\label{sec:braidedaction}
  We want to apply the homological stability machine of \cite{krannich19} to stabilization
  in $\M$ with the bidecorated disk
  $$
  D \defeq (X_1,1,\text{id}).
  $$
  For this, we need that
  the classifying space of $\M$ is an $E_1$--module over an $E_2$--algebra.
  This will follow if we can show on the categorical level that
  $\M$ admits an appropriate action of a braided monoidal groupoid.
  We will build such an action in this section,
  using as braided monoidal groupoid the groupoid of braid groups.
  In constrast with most classical examples of homological stability,
  we will show in Section~\ref{sec:notbraided} that
  this action of the braid groupoid does not come from a braided structure on $\M$,
  or the full monoidal subcategory generated by $D$.
  It is instead constructed using a {\em
    Yang--Baxter element} in $\M$, associated to a braid subgroup of
  the mapping class group of $X_m$, that we will describe now.

\smallskip
  
Write
 $$
 D^{\hash m}
 =
 D_1 \hash \ldots \hash (D_i \hash D_{i+1}) \hash \ldots \hash D_m,
 $$
 where we use subscript to enumerate the disks, and where the
 underlying surface is $X_m$.
 We let $a_i$ denote the isotopy class of a curve in the interior $D_i
  \hash D_{i+1}\cong S^1\x I$
  that is parallel to its boundary components,
  as shown in Figure~\ref{fig:rhoiai}.
\begin{figure}
    \def\svgwidth{0.8\textwidth}
  %% Creator: Inkscape 1.2.1 (9c6d41e410, 2022-07-14, custom), www.inkscape.org
%% PDF/EPS/PS + LaTeX output extension by Johan Engelen, 2010
%% Accompanies image file '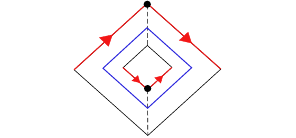' (pdf, eps, ps)
%%
%% To include the image in your LaTeX document, write
%%   \input{<filename>.pdf_tex}
%%  instead of
%%   \includegraphics{<filename>.pdf}
%% To scale the image, write
%%   \def\svgwidth{<desired width>}
%%   \input{<filename>.pdf_tex}
%%  instead of
%%   \includegraphics[width=<desired width>]{<filename>.pdf}
%%
%% Images with a different path to the parent latex file can
%% be accessed with the `import' package (which may need to be
%% installed) using
%%   \usepackage{import}
%% in the preamble, and then including the image with
%%   \import{<path to file>}{<filename>.pdf_tex}
%% Alternatively, one can specify
%%   \graphicspath{{<path to file>/}}
%% 
%% For more information, please see info/svg-inkscape on CTAN:
%%   http://tug.ctan.org/tex-archive/info/svg-inkscape
%%
\begingroup%
  \makeatletter%
  \providecommand\color[2][]{%
    \errmessage{(Inkscape) Color is used for the text in Inkscape, but the package 'color.sty' is not loaded}%
    \renewcommand\color[2][]{}%
  }%
  \providecommand\transparent[1]{%
    \errmessage{(Inkscape) Transparency is used (non-zero) for the text in Inkscape, but the package 'transparent.sty' is not loaded}%
    \renewcommand\transparent[1]{}%
  }%
  \providecommand\rotatebox[2]{#2}%
  \newcommand*\fsize{\dimexpr\f@size pt\relax}%
  \newcommand*\lineheight[1]{\fontsize{\fsize}{#1\fsize}\selectfont}%
  \ifx\svgwidth\undefined%
    \setlength{\unitlength}{85.03937008bp}%
    \ifx\svgscale\undefined%
      \relax%
    \else%
      \setlength{\unitlength}{\unitlength * \real{\svgscale}}%
    \fi%
  \else%
    \setlength{\unitlength}{\svgwidth}%
  \fi%
  \global\let\svgwidth\undefined%
  \global\let\svgscale\undefined%
  \makeatother%
  \begin{picture}(1,0.46666667)%
    \lineheight{1}%
    \setlength\tabcolsep{0pt}%
    \put(0,0){\includegraphics[width=\unitlength,page=1]{rho-and-ai.pdf}}%
    \put(0.49581401,0.18884676){\makebox(0,0)[lt]{\lineheight{1.25}\smash{\begin{tabular}[t]{l}$b_1$\end{tabular}}}}%
    \put(0.51640974,0.45716485){\makebox(0,0)[lt]{\lineheight{1.25}\smash{\begin{tabular}[t]{l}$b_0$\end{tabular}}}}%
    \put(0.39131347,0.12541836){\makebox(0,0)[lt]{\lineheight{1.25}\smash{\begin{tabular}[t]{l}$a_i$\end{tabular}}}}%
  \end{picture}%
\endgroup%

  \caption{The curve $a_i$ in $D_i\hash D_{i+1}$}\label{fig:rhoiai}
\end{figure}

\begin{lemma}\label{lem:chain}
  The curves $a_1,\dots,a_{m-1}$ form a {\em chain} in $D^{\hash m}$, i.e. $a_i$ and $a_{i+1}$ have intersection number 1 for each $i$, and $a_i\cap a_j=\emptyset$ if $|i-j|>1$. 
\end{lemma}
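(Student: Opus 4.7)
A chain of curves requires geometric intersection number $1$ for adjacent indices and disjointness for non-adjacent ones. I would treat the two cases separately, working directly with the recursive construction of $X_m = D^{\hash m}$ from Section~\ref{sec:bidecorated}.

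\textbf{Disjointness for $|i-j|>1$.} Assume without loss of generality $j\ge i+2$. The subsurface $D_i\hash D_{i+1}$ of $X_m$ is contained in the union of the disks $D_i$ and $D_{i+1}$, while $D_j\hash D_{j+1}$ is contained in $D_j\cup D_{j+1}$. Since $\{i,i+1\}\cap\{j,j+1\}=\emptyset$, these two subsurfaces have disjoint interiors. As $a_i$ and $a_j$ are by definition in the interiors of these cylinders, one immediately gets $a_i\cap a_j=\emptyset$.

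\textbf{Intersection number $1$ for $j=i+1$.} Here $a_i$ and $a_{i+1}$ share the middle disk $D_{i+1}$, and the aim is to produce representatives that meet transversally in exactly one point inside $D_{i+1}$. Unwinding the recursive gluings, the four marked sub-arcs of $\partial D_{i+1}$ appear in cyclic order as $\iota_1^0([0,\tfrac12])$, $\iota_1^0([\tfrac12,1])$, $\iota_1^1([0,\tfrac12])$, $\iota_1^1([\tfrac12,1])$; the first halves are identified with sub-arcs of $\partial D_i$ when $D_{i+1}$ is attached to $X_i$, and the second halves are identified with sub-arcs of $\partial D_{i+2}$ when $D_{i+2}$ is later attached. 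Thus the $D_i$-glued and $D_{i+2}$-glued arcs alternate around $\partial D_{i+1}$. The core curve $a_i$ of the cylinder $D_i\hash D_{i+1}$ meets $D_{i+1}$ in a chord joining the midpoints of the two $D_i$-glued arcs; similarly $a_{i+1}\cap D_{i+1}$ is a chord joining the midpoints of the two $D_{i+2}$-glued arcs. Since the endpoints of these two chords lie in alternating cyclic position on $\partial D_{i+1}$, the chords must cross transversally in precisely one interior point by Jordan's theorem. Outside $D_{i+1}$ the two curves lie in the disjoint subsurfaces $D_i$ and $D_{i+2}$ respectively, so there are no further intersections; and two simple closed curves meeting in a single point can never bound a bigon, so the geometric intersection number is indeed $1$.

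\textbf{Main obstacle.} The only non-routine aspect is the combinatorial bookkeeping of the recursive construction: correctly identifying which of the four sub-intervals of $\partial D_{i+1}$ is glued to $D_i$ and which to $D_{i+2}$, and checking that they occur in alternating cyclic order. Once that is unraveled, the entire argument is local to the single disk $D_{i+1}$ and reduces to the elementary fact that two chords joining opposite pairs among four cyclically-alternating boundary arcs in a disk must cross exactly once.
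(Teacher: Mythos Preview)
Your proposal is correct and follows essentially the same approach as the paper: both arguments observe that $a_i$ lives in $D_i\cup D_{i+1}$ so that non-adjacent curves are automatically disjoint, and then localize the adjacent case to the subsurface $D_i\hash D_{i+1}\hash D_{i+2}$. The paper simply says the intersection number can be ``checked by hand'' and defers to Figure~\ref{fig:crossing}, whereas you spell out the combinatorics explicitly---tracking which halves of the marked intervals on $D_{i+1}$ are glued to $D_i$ versus $D_{i+2}$, observing that they alternate cyclically, and concluding via the chord-crossing argument---so your write-up is a more detailed version of the same idea rather than a different route.
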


\begin{proof}
  The curve $a_i$ lives in the disks $D_i$ and $D_{i+1}$,
  so it can only intersect $a_{i-1}$ and $a_{i+1}$ non-trivially,
  and hence it suffices to consider
  the subsurface of $D^{\hash m}$ corresponding to
  $D_i\hash D_{i+1}\hash D_{i+2}$.
  Here the claim can be checked by hand, see Figure~\ref{fig:crossing}.
\end{proof}
\begin{figure}
  \def\svgwidth{0.65\textwidth}
  %% Creator: Inkscape 1.2.1 (9c6d41e410, 2022-07-14, custom), www.inkscape.org
%% PDF/EPS/PS + LaTeX output extension by Johan Engelen, 2010
%% Accompanies image file '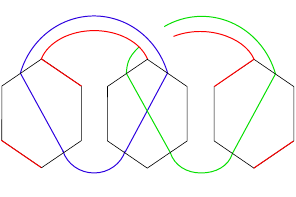' (pdf, eps, ps)
%%
%% To include the image in your LaTeX document, write
%%   \input{<filename>.pdf_tex}
%%  instead of
%%   \includegraphics{<filename>.pdf}
%% To scale the image, write
%%   \def\svgwidth{<desired width>}
%%   \input{<filename>.pdf_tex}
%%  instead of
%%   \includegraphics[width=<desired width>]{<filename>.pdf}
%%
%% Images with a different path to the parent latex file can
%% be accessed with the `import' package (which may need to be
%% installed) using
%%   \usepackage{import}
%% in the preamble, and then including the image with
%%   \import{<path to file>}{<filename>.pdf_tex}
%% Alternatively, one can specify
%%   \graphicspath{{<path to file>/}}
%% 
%% For more information, please see info/svg-inkscape on CTAN:
%%   http://tug.ctan.org/tex-archive/info/svg-inkscape
%%
\begingroup%
  \makeatletter%
  \providecommand\color[2][]{%
    \errmessage{(Inkscape) Color is used for the text in Inkscape, but the package 'color.sty' is not loaded}%
    \renewcommand\color[2][]{}%
  }%
  \providecommand\transparent[1]{%
    \errmessage{(Inkscape) Transparency is used (non-zero) for the text in Inkscape, but the package 'transparent.sty' is not loaded}%
    \renewcommand\transparent[1]{}%
  }%
  \providecommand\rotatebox[2]{#2}%
  \newcommand*\fsize{\dimexpr\f@size pt\relax}%
  \newcommand*\lineheight[1]{\fontsize{\fsize}{#1\fsize}\selectfont}%
  \ifx\svgwidth\undefined%
    \setlength{\unitlength}{85.03937008bp}%
    \ifx\svgscale\undefined%
      \relax%
    \else%
      \setlength{\unitlength}{\unitlength * \real{\svgscale}}%
    \fi%
  \else%
    \setlength{\unitlength}{\svgwidth}%
  \fi%
  \global\let\svgwidth\undefined%
  \global\let\svgscale\undefined%
  \makeatother%
  \begin{picture}(1,0.66666667)%
    \lineheight{1}%
    \setlength\tabcolsep{0pt}%
    \put(0,0){\includegraphics[width=\unitlength,page=1]{intersection-of-ai-curves.pdf}}%
    \put(0.02291926,0.26440439){\makebox(0,0)[lt]{\lineheight{1.25}\smash{\begin{tabular}[t]{l}$D_i$\end{tabular}}}}%
    \put(0.37865254,0.26440439){\makebox(0,0)[lt]{\lineheight{1.25}\smash{\begin{tabular}[t]{l}$D_{i+1}$\end{tabular}}}}%
    \put(0.74533518,0.26440439){\makebox(0,0)[lt]{\lineheight{1.25}\smash{\begin{tabular}[t]{l}$D_{i+2}$\end{tabular}}}}%
    \put(0,0){\includegraphics[width=\unitlength,page=2]{intersection-of-ai-curves.pdf}}%
  \end{picture}%
\endgroup%

  \caption{Intersection of $a_i$ (blue) and $a_{i+1}$ (green) in the
    underlying surface of $D_i\hash D_{i+1}\hash D_{i+2}$.}
  \label{fig:crossing}
\end{figure}

Let $T_i\in\text{Aut}_{\M}(D^{\hash m})$
denote the Dehn twist along the curve $a_i$ in $D^{\hash m}$.
A classical fact states that the Dehn twists along a chain of embedded
curves satisfy the braid relations (see e.g.~\cite[3.9 and 3.11]{MCG-primer}):
\begin{equation}\label{equ:braidrel}
  \begin{aligned}
  T_{i}T_{i+1}T_{i} & = T_{i+1}T_{i}T_{i+1} &&\quad\text{for all } i, \\
  T_iT_j & = T_jT_i  &&\quad\text{if } |i - j| > 1, 
\end{aligned}
\end{equation}
Note that the same relations are satisfied by the inverse twists $T_i^{-1}$,
that will turn out to be more convenient for us.
Also, adding a disk to the right or left of $D^{\hash m}$ gives the relations
$$
T_i\hash\text{id}_D = T_i\quad\text{and}\quad\text{id}_D\hash T_i = T_{i+1}
$$
in $\Aut_{\M}(D^{\hash m+1})$.
In particular~\eqref{equ:braidrel} includes the relation
$$
(T_1^{-1}\hash\text{id}_D)
(\text{id}_D\hash T_1^{-1})
(T_1^{-1}\hash\text{id}_D)
=
(\text{id}_D\hash T_1^{-1})
(T_1^{-1}\hash\text{id}_D)
(\text{id}_D\hash T_1^{-1})
$$
in $\Aut_\M (D^{\hash 3})$,
so in other words,
the inverse Dehn twist $T_1^{-1}\in \Aut_\M (D{\hash} D)$ is a Yang--Baxter operator
in the sense of Section~\ref{sec:YB}.

Recall from the introduction that $\BB$ denotes the groupoid of braid
groups, with objects the natural numbers $\lbrace 0,1,2,\dots\rbrace$,
automorphisms of $n$ the
braid group $B_n$, and no other non-trivial morphisms. 
In Section~\ref{sec:YB} we show that, being a Yang-Baxter operator, $T_1^{-1}$ yields
a strong monoidal functor
$$
\Phi = \Phi_{D,T_1^{-1}}\colon (\BB ,\oplus )\to (\M ,\hash ),
$$
uniquely determined up to monoidal natural isomorphism by the fact
that $\Phi (1) = D$ and, for the standard generator $\sigma_1\in B_2= \text{Aut}_\BB (1)$,
$\Phi (\sigma_1) = T_1^{-1}$.

\smallskip
Such a functor $\Phi$  endows $\M$ with the structure
of an $E_1$-module over $\BB$ via the associated functor
\begin{align*}
 \alpha=( - \hash \Phi(-)) \colon \  \M\times \BB &\rar \M,  
\end{align*}
given on objects by $\alpha(S,n)= S\hash\Phi (n) = S\hash D^{\hash
  n}$, and likewise for morphisms. 
On classifying spaces, this yields exactly the kind of input needed in Krannich's homological stability
framework, see \cite[Lem 7.2]{krannich19}.

\begin{remark}
  For each $m$, the restriction of the functor $\Phi: \BB\to \M$ to
  $B_m = \text{Aut}_\BB(m)$ maps the standard generator $\sigma_i$
  to the inverse Dehn twist $T_i^{-1}\in \Aut_\M (D^{\hash
    m})=\pi_0\textrm{Homeo}_\partial(X_m)$.
  By Birman--Hilden theory~\cite{BH1,BH2}
  the homomorphisms $\Phi\vert_{B_m}\colon B_m\to\text{Aut}_\M (D^{\hash m})$
  are actually injective. 
\end{remark}

\section{Homological stability}
\label{sec:stab}

Generalizing the main result of \cite{RWW17},
Krannich associates to an $\E_1$-module $\mathcal M$
over an $\E_2$-algebra $\mathcal X$
with a chosen stabilizing object $X\in\mathcal X$,
a {\em space of destabilizations} at every $A\in\mathcal M$,
whose high connectivity implies  homological stability at $A$
when stabilizing by $X$.
We are interested in the case
where $\mathcal M=B\M$ is the classifying
space of $\M$ and $\mathcal X=B\BB$, acting on $B\M$
via the map $\alpha\colon \M\times \BB\to \M$ defined in
Section~\ref{sec:braidedaction}. 
We will pick $A=S\in \M$ to be some surface, with $X=1\in \BB$
 modelling stabilization with the disk as $\alpha(-,X)=-\hash D$ is the sum with the  bidecorated disk $D=(X_1,1,\id)$ of
Section~\ref{sec:bidecorated}. 

Generally,
the space of destabilizations is a semi-simplicial space,
but in settings such as ours, it
is actually levelwise homotopy discrete. Indeed, by  \cite[Lem
7.6]{krannich19}), 
when the structure of $\E_1$-module
over an $\E_2$-algebra is induced by an action of a braided monoidal
category on a groupoid, and under the  injectivity condition given in 
Proposition~\ref{prop:Mmonoid}, the space of destabilizations 
is equivalent to the following semi-simplicial set,
defined just as in \cite{RWW17} in 
the case of a braided monoidal groupoid acting on itself.

\begin{definition}(\cite[Def 7.5]{krannich19})\label{def:WS}
  Let $(\mathcal M,\oplus)$ be a right module
  over a braided monoidal groupoid $(\mathcal X,\oplus,b)$,
  where we denote also by $\oplus$ the module action.
  Let $A$ and $X$ be objects of $\mathcal M$ and $\mathcal X$ respectively.
  The {\em space of destabilizations} $W_n(A,X)_\bullet$ is the semi-simplicial
  set with set of $p$-simplices 
\begin{align*}
  W_n(A,X)_p 
                 =& \ \{(B,f)\ |\ B\in \textrm{Ob}(\mathcal M) \ \textrm{and}\ f\colon B\oplus X^{\oplus p+1}\to A\oplus X^{\oplus n} \ \textrm{in}\ \mathcal M\}/_\sim
\end{align*}
where $(B,f)\sim (B',f')$ if there exists an isomorphism $g\colon B\to B'$ in $\mathcal C$ satisfying that $f=f'\circ (g\oplus \id_{X^{\oplus p+1}})$. 
The face map $d_i\colon W_n(A,X)_p\to W_n(A,X)_{p-1}$ is defined by $d_i[B,f]=[B\oplus X, d_if]$ for  
$$d_i f\colon B\oplus X\oplus X^{p} \xrightarrow{\id_B\oplus b_{X^{\oplus i},X}^{-1}\oplus \id_{X^{\oplus p-i}}} B\oplus X^{\oplus i}\oplus X\oplus X^{\oplus p-i}\xrightarrow{\ f\ } A\oplus X^{\oplus n},$$
for $b_{X^{\oplus i},X}^{-1}\colon X\oplus X^{\oplus i}\to  X^{\oplus i}\oplus X$ coming from the braiding in $\mathcal X$. 
\end{definition}

\subsection{Disk destabilizations and disordered arcs}\label{sec:disk}

Given a bidecorated orientable\footnote{The definition of the
  disordered arc complex naturally extend to non-orientable
  bidecorated surfaces, ordering the arcs according to the
  orientations of $I_0$ and $I_1$, but we will only consider orientable surfaces
  here} surface $S = (S,m,\varphi )$, with $I_0,I_1$ compatibly oriented, let $\mathcal D(S) = \mathcal D^\nu(S,b_0,b_1)$ 
denote the disordered arc complex of $S$ as in Section~\ref{sec:high-cnt}, where 
$$
b_0 = I_0(1/2)\quad\text{and}\quad b_1 = I_1(1/2)
$$
are the midpoints of the marked intervals, and $\nu=1$ if $I_0$ and
$I_1$ lie on the same boundary component and $\nu=2$ otherwise. 
The vertices of a simplex in $\mathcal D(S)$ are canonically
ordered by the anti-clockwise ordering at $b_0$ 
(or equivalently at $b_1$).
Hence we can associate to this simplicial complex a semi-simplicial set
that we denote $\mathcal D(S)_\bullet$,
with same set of $p$-simplices and whose $i$th face map is given
by forgetting the $(i+1)$st arc with respect to that  ordering.
As $\mathcal D(S)$ and  $\mathcal D(S)_\bullet$ have homeomorphic realizations,
they have the same connectivity.

\medskip

Write  $W_n(S,D)_\bullet$ for the space of destabilization of
Definition~\ref{def:WS} associated to the module 
 $\mathcal M=\M$ over the $E_2$--algebra $\mathcal X=\BB$ acting on $\M$ as above,
with 
$X=1\in \BB$,
and $A=S=(S_{g,r}^s,m,\varphi)$ some bidecorated
orientable surface of small genus 
$g\ge 0$, with $r$ boundary components and $s$ punctures. 
The space $W_n(S,D)_\bullet$ is then  the space of destabilizations
of the stabilization map
$$
\Aut_\M (S\hash D^{\hash n-1}) \xrightarrow{\hash D} \Aut_\M (S\hash D^{\hash n})
$$
that attaches an additional disk to the surface along the two marked
intervals.

We want to identify $W_n(S,D)_\bullet$ with $\mathcal D(S\hash
D^{\hash n})_\bullet$. 
For this, we start by constructing  a particular disordered collection of arcs in $D^{\hash n}$.
Write again
$$
D^{\hash n} = D_1\hash \dots \hash D_i\hash\dots\hash
D_n, 
$$
and let $\rho_i$ denote the unique isotopy class of arc in the $i$th disk $D_i$
going from $b_0 = I_0(1/2)$ to $b_1 = I_1(1/2)$.
\begin{lemma}\label{lem:rhoord}
  The arcs $\rho_1,\dots,\rho_m$ are ordered anti-clockwise at both $b_0$ and $b_1$.
\end{lemma}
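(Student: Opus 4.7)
The plan is to prove the ordering at $b_0$ (the case of $b_1$ is symmetric) by induction on $n$, exploiting the inductive construction $D^{\hash n+1} = D^{\hash n} \hash D_{n+1}$ of the underlying surface $X_{n+1}$ from $X_n$. The base case $n = 1$ is vacuous since there is only one arc. The key input for the inductive step is to read off how the local picture at the boundary point $b_0$ changes under the attachment of the disk $D_{n+1}$.

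Next, I would analyze this local picture explicitly. By the recursion for the bidecorated structure, $b_0 = \iota_n^0(1/2) = \iota_1^0(1/2)$ is identified through the gluing, while the second half $\iota_n^0([\rfrac12,1])$, which previously lay in the boundary of $D_n$, becomes identified with $\overline{\iota_1^0}([\rfrac12,1])$ in $D_{n+1}$ and hence becomes an interior arc in $D^{\hash n+1}$ emanating from $b_0$. The new boundary $I_0 = \iota_{n+1}^0$ of $D^{\hash n+1}$ at $b_0$ consists of the unchanged first half (lying in $D_1$) on one side and the new second half $\iota_1^0([\rfrac12,1])$ in $D_{n+1}$ on the other. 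Reading off the anti-clockwise cyclic order at $b_0$: starting from the incoming boundary (in $D_1$) and rotating through the interior, one passes successively through $D_1, D_2, \ldots, D_n$ by the inductive hypothesis, then crosses the newly created interior arc into $D_{n+1}$ before reaching the outgoing boundary. Thus $\rho_{n+1}$ is appended at the end of the anti-clockwise list, completing the induction.

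The main obstacle is tracking the orientation conventions carefully enough to be sure that the reversal $\overline{\iota_1^0}$ used in the gluing places $D_{n+1}$ after $D_n$ in the anti-clockwise order rather than, say, before $D_1$. This is precisely the role of the reversal in the construction of Section~\ref{sec:bidecorated}: it guarantees that the disks in $\C$ are glued in an orientation-preserving way, so that the fan of disks around $b_0$ extends consistently. A symmetric argument at $b_1$ (using $\iota_n^1$ and $\iota_1^1$ in place of $\iota_n^0$ and $\iota_1^0$) handles the remaining case, and invoking Figure~\ref{fig:rhoiai} as a sanity check makes the picture transparent.
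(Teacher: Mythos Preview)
Your argument is correct, and the core geometric observation is the same as the paper's: attaching the next disk places its arc anti-clockwise after the previous one at both $b_0$ and $b_1$. The packaging differs, however. The paper does not set up an induction on $m$; instead it observes that a total anti-clockwise ordering is determined by adjacent pairs, so it suffices to check that $\rho_i$ precedes $\rho_{i+1}$ for each $i$. That comparison is local to the cylinder $D_i\hash D_{i+1}$, and since this subsurface is the same for every $i$, a single picture (Figure~\ref{fig:rhoiorder}) settles all cases at once, with the symmetry between $I_0$ and $I_1$ immediately giving the same conclusion at both endpoints. Your induction reaches the same endpoint but carries along the full fan $\rho_1,\dots,\rho_n$ and implicitly uses that the subsurface embedding $D^{\hash n}\hookrightarrow D^{\hash n+1}$ preserves the existing ordering near $b_0$; this is true but is an extra thing to say. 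The paper's reduction to a single pairwise local check is slightly more economical, while your inductive framing makes the global picture at $b_0$ more explicit.
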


\begin{proof}
  It suffices to show that $\rho_i$ and $\rho_{i+1}$ are ordered anti-clockwise
  at $b_0$ and $b_1$ for each $i$.
  Thus we need only consider what happens in the subsurface $D_i\hash D_{i+1}$.
  The gluing being defined in exactly the same way at $I_0$ and $I_1$,
  the arcs are ordered in the same way at both endpoints,
  and the particular choice of gluing gives the anti-clockwise ordering,
  see  Figure~\ref{fig:rhoiorder}.
\end{proof}
\begin{figure}
  \def\svgwidth{0.8\textwidth}
  %% Creator: Inkscape 1.2.1 (9c6d41e410, 2022-07-14, custom), www.inkscape.org
%% PDF/EPS/PS + LaTeX output extension by Johan Engelen, 2010
%% Accompanies image file '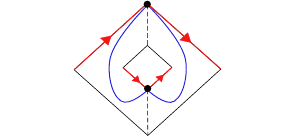' (pdf, eps, ps)
%%
%% To include the image in your LaTeX document, write
%%   \input{<filename>.pdf_tex}
%%  instead of
%%   \includegraphics{<filename>.pdf}
%% To scale the image, write
%%   \def\svgwidth{<desired width>}
%%   \input{<filename>.pdf_tex}
%%  instead of
%%   \includegraphics[width=<desired width>]{<filename>.pdf}
%%
%% Images with a different path to the parent latex file can
%% be accessed with the `import' package (which may need to be
%% installed) using
%%   \usepackage{import}
%% in the preamble, and then including the image with
%%   \import{<path to file>}{<filename>.pdf_tex}
%% Alternatively, one can specify
%%   \graphicspath{{<path to file>/}}
%% 
%% For more information, please see info/svg-inkscape on CTAN:
%%   http://tug.ctan.org/tex-archive/info/svg-inkscape
%%
\begingroup%
  \makeatletter%
  \providecommand\color[2][]{%
    \errmessage{(Inkscape) Color is used for the text in Inkscape, but the package 'color.sty' is not loaded}%
    \renewcommand\color[2][]{}%
  }%
  \providecommand\transparent[1]{%
    \errmessage{(Inkscape) Transparency is used (non-zero) for the text in Inkscape, but the package 'transparent.sty' is not loaded}%
    \renewcommand\transparent[1]{}%
  }%
  \providecommand\rotatebox[2]{#2}%
  \newcommand*\fsize{\dimexpr\f@size pt\relax}%
  \newcommand*\lineheight[1]{\fontsize{\fsize}{#1\fsize}\selectfont}%
  \ifx\svgwidth\undefined%
    \setlength{\unitlength}{85.03937008bp}%
    \ifx\svgscale\undefined%
      \relax%
    \else%
      \setlength{\unitlength}{\unitlength * \real{\svgscale}}%
    \fi%
  \else%
    \setlength{\unitlength}{\svgwidth}%
  \fi%
  \global\let\svgwidth\undefined%
  \global\let\svgscale\undefined%
  \makeatother%
  \begin{picture}(1,0.46666667)%
    \lineheight{1}%
    \setlength\tabcolsep{0pt}%
    \put(0,0){\includegraphics[width=\unitlength,page=1]{ordering-of-rhoi.pdf}}%
    \put(0.49581401,0.18884676){\makebox(0,0)[lt]{\lineheight{1.25}\smash{\begin{tabular}[t]{l}$b_1$\end{tabular}}}}%
    \put(0.51640974,0.45716485){\makebox(0,0)[lt]{\lineheight{1.25}\smash{\begin{tabular}[t]{l}$b_0$\end{tabular}}}}%
    \put(0.32075787,0.21701872){\makebox(0,0)[lt]{\lineheight{1.25}\smash{\begin{tabular}[t]{l}$\rho_i$\end{tabular}}}}%
    \put(0.65198606,0.21701872){\makebox(0,0)[lt]{\lineheight{1.25}\smash{\begin{tabular}[t]{l}$\rho_{i+1}$\end{tabular}}}}%
  \end{picture}%
\endgroup%

  \caption{Ordering of the arcs $\rho_i$ at their endpoints}\label{fig:rhoiorder}
\end{figure}

Recall from Section~\ref{sec:braidedaction} the Dehn twist $T_i$ along
the curve $a_i$ in $D_i\hash D_{i+1}$.
The union of the arcs $\rho_i$ in $D^{\hash m}$ define a deformation retract of the surface,
as each disk $D_i$ retracts onto the corresponding arc $\rho_i$, 
and we can understand the action of the twists $T_i$ on the surface
by considering their action on the arcs $\rho_i$. The action is given
by the following result, that will be needed to compare the face maps
in the semi-simplicial sets $W_n(S,D)_\bullet$ with $\mathcal D(S\hash
D^{\hash n})_\bullet$. 
\begin{lemma}\label{lem:Trho}
  The action of the Dehn twist $T_i$ along the curve $a_i$ on the homotopy classes of the arcs $\rho_i$, relative to their endpoints, is 
  $$
  T_i(\rho_j)
  =
  \begin{cases}
    \rho_i\overline{\rho_{i+1}}\rho_i &\text{if } j = i, \\
    \rho_{i} &\text{if } j = i+1, \\
    \rho_j &\text{else.}
  \end{cases}
  $$
 Equivalently, 
 $$T_i^{-1}(\rho_i)=\rho_{i+1} \ \ \ \textrm{and} \ \ \ T_i^{-1}(\rho_{i+1})=\rho_{i+1}\overline{\rho_i}\rho_{i+1}$$
 and $T_i^{-1}$ leaves the other $\rho_j$ invariant. 
\end{lemma}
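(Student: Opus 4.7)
The plan is to localize the computation to the cylinder $D_i\hash D_{i+1}$, where a direct geometric argument suffices, and then handle the remaining arcs by a support argument. The Dehn twist $T_i$ is supported in an annular neighborhood of the curve $a_i$, which lies entirely inside the subsurface $D_i\hash D_{i+1}$. For $j\notin\{i,i+1\}$, the arc $\rho_j$ lies in the disk $D_j$, which is disjoint from this neighborhood, so $T_i(\rho_j)=\rho_j$; this disposes of the third case.

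For the remaining cases I would restrict attention to $C=D_i\hash D_{i+1}$, which by Lemma~\ref{lem:surface-type} is a sphere with two holes, with $b_0$ on one boundary component and $b_1$ on the other. Inside $C$, the arcs $\rho_i$ and $\rho_{i+1}$ are disjoint properly embedded arcs from $b_0$ to $b_1$, and $a_i$ is a core circle of $C$ meeting each of $\rho_i,\rho_{i+1}$ transversely in a single point. The fundamental group $\pi_1(C,b_0)$ is infinite cyclic, and the loop $\rho_i\overline{\rho_{i+1}}$ represents a generator, as does $a_i$ up to change of basepoint.

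The values of $T_i(\rho_i)$ and $T_i(\rho_{i+1})$ can then be read off from a picture of the twist in $C$, analogous to Figure~\ref{fig:rhoiai}: twisting $\rho_i$ along $a_i$ drags its interior once around the cylinder and yields an arc homotopic rel endpoints to $\rho_i\overline{\rho_{i+1}}\rho_i$, while twisting $\rho_{i+1}$ in the same direction yields an arc homotopic rel endpoints to $\rho_i$. The main obstacle, and the step most prone to error, is checking that the orientation conventions fixed earlier, namely the direction of the twist $T_i$, the anti-clockwise orderings at $b_0,b_1$ given by Lemma~\ref{lem:rhoord}, and the choice of $a_i$ in Figure~\ref{fig:rhoiai}, are consistent with these specific formulas rather than their inverses; this has to be verified against the pictures.

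Finally, the equivalent formulas for $T_i^{-1}$ follow by purely algebraic manipulation. Applying $T_i^{-1}$ to both sides of $T_i(\rho_{i+1})=\rho_i$ yields $T_i^{-1}(\rho_i)=\rho_{i+1}$; applying $T_i^{-1}$ to $T_i(\rho_i)=\rho_i\overline{\rho_{i+1}}\rho_i$ and substituting the previous relation gives $\rho_i=\rho_{i+1}\,\overline{T_i^{-1}(\rho_{i+1})}\,\rho_{i+1}$, which solves to $T_i^{-1}(\rho_{i+1})=\rho_{i+1}\overline{\rho_i}\rho_{i+1}$. The invariance of the remaining $\rho_j$ is immediate from the first step.
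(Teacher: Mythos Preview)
Your proposal is correct and follows essentially the same approach as the paper: both argue that $T_i$ fixes $\rho_j$ for $j\notin\{i,i+1\}$ by disjointness of $a_i$ from those arcs, and then localize to the cylinder $D_i\hash D_{i+1}$ to read off the action on $\rho_i$ and $\rho_{i+1}$ directly from a picture. Your additional explicit derivation of the $T_i^{-1}$ formulas and your remark about checking orientation conventions are welcome elaborations, but the underlying argument is the same.
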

\begin{proof}
  The Dehn twist $T_i$ can only affect $\rho_i$ and $\rho_{i+1}$ as the curve $a_i$ only intersects these two arcs, from which the last case in the statement follows.
  The computation for the arcs $\rho_i$ and $\rho_{i+1}$ is local to
  $D_i\hash D_{i+1}$, where, as shown in Figure~\ref{fig:twist},
  we have $T_i(\rho_i)\simeq\rho_i\overline{\rho_{i+1}}\rho_i$,
  giving the first case in the statement,
  and $T_i(\rho_{i+1})\simeq\rho_i$, giving the second case.
\end{proof}
\begin{figure}
  \def\svgwidth{0.8\textwidth}
  %% Creator: Inkscape 1.2.1 (9c6d41e410, 2022-07-14, custom), www.inkscape.org
%% PDF/EPS/PS + LaTeX output extension by Johan Engelen, 2010
%% Accompanies image file '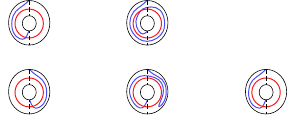' (pdf, eps, ps)
%%
%% To include the image in your LaTeX document, write
%%   \input{<filename>.pdf_tex}
%%  instead of
%%   \includegraphics{<filename>.pdf}
%% To scale the image, write
%%   \def\svgwidth{<desired width>}
%%   \input{<filename>.pdf_tex}
%%  instead of
%%   \includegraphics[width=<desired width>]{<filename>.pdf}
%%
%% Images with a different path to the parent latex file can
%% be accessed with the `import' package (which may need to be
%% installed) using
%%   \usepackage{import}
%% in the preamble, and then including the image with
%%   \import{<path to file>}{<filename>.pdf_tex}
%% Alternatively, one can specify
%%   \graphicspath{{<path to file>/}}
%% 
%% For more information, please see info/svg-inkscape on CTAN:
%%   http://tug.ctan.org/tex-archive/info/svg-inkscape
%%
\begingroup%
  \makeatletter%
  \providecommand\color[2][]{%
    \errmessage{(Inkscape) Color is used for the text in Inkscape, but the package 'color.sty' is not loaded}%
    \renewcommand\color[2][]{}%
  }%
  \providecommand\transparent[1]{%
    \errmessage{(Inkscape) Transparency is used (non-zero) for the text in Inkscape, but the package 'transparent.sty' is not loaded}%
    \renewcommand\transparent[1]{}%
  }%
  \providecommand\rotatebox[2]{#2}%
  \newcommand*\fsize{\dimexpr\f@size pt\relax}%
  \newcommand*\lineheight[1]{\fontsize{\fsize}{#1\fsize}\selectfont}%
  \ifx\svgwidth\undefined%
    \setlength{\unitlength}{85.03937008bp}%
    \ifx\svgscale\undefined%
      \relax%
    \else%
      \setlength{\unitlength}{\unitlength * \real{\svgscale}}%
    \fi%
  \else%
    \setlength{\unitlength}{\svgwidth}%
  \fi%
  \global\let\svgwidth\undefined%
  \global\let\svgscale\undefined%
  \makeatother%
  \begin{picture}(1,0.38773174)%
    \lineheight{1}%
    \setlength\tabcolsep{0pt}%
    \put(0,0){\includegraphics[width=\unitlength,page=1]{action-of-t-on-rho.pdf}}%
    \put(0.27355328,0.33500277){\makebox(0,0)[lt]{\lineheight{1.25}\smash{\begin{tabular}[t]{l}$T_i$\end{tabular}}}}%
    \put(0.26849543,0.10059468){\makebox(0,0)[lt]{\lineheight{1.25}\smash{\begin{tabular}[t]{l}$T_i$\end{tabular}}}}%
    \put(0.69529006,0.06843478){\makebox(0,0)[lt]{\lineheight{1.25}\smash{\begin{tabular}[t]{l}$\simeq$\end{tabular}}}}%
    \put(0.03405323,0.22986291){\makebox(0,0)[lt]{\lineheight{1.25}\smash{\begin{tabular}[t]{l}$\color{blue} \rho_i$\end{tabular}}}}%
    \put(0.16320898,0.0211572){\makebox(0,0)[lt]{\lineheight{1.25}\smash{\begin{tabular}[t]{l}$\color{blue} \rho_{i+1}$\end{tabular}}}}%
    \put(0,0){\includegraphics[width=\unitlength,page=2]{action-of-t-on-rho.pdf}}%
  \end{picture}%
\endgroup%

  \caption{The action of the Dehn twist $T_i$ on the arcs
    $\rho_i$ (top) and $\rho_{i+1}$ (bottom)}\label{fig:twist}
\end{figure}
 
  \begin{proposition}\label{prop:Wiso}
    Let $S=(S,m,\varphi)$
    be an object of $\M$.
    There is an isomorphism of semi-simplicial sets
    $$W_n(S,D)_\bullet\cong \mathcal D^\nu(S\hash D^{\hash n})_\bullet$$
 where the marked points $b_0$ and $b_1$ are the midpoints of the
 intervals $I_0$ and $I_1$ in $S\hash D^{\hash n}$ and with 
 $\nu=\operatorname{parity}(m+n)$, that is $\nu=1$ if  $I_0$ and $I_1$ lie in the same boundary component of $S\hash D^{\hash n}$ and  $\nu=2$ otherwise. 
  \end{proposition}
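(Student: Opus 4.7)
The plan is to construct an explicit isomorphism of semi-simplicial sets $\Psi\colon W_n(S,D)_\bullet\to\mathcal D^\nu(S\hash D^{\hash n})_\bullet$ using the canonical arcs $\rho_1,\dots,\rho_{p+1}$ introduced above. On $p$-simplices I set $\Psi([B,f])=\langle f(\rho_1),\dots,f(\rho_{p+1})\rangle$, with the vertices ordered anti-clockwise at $b_0$. This is well defined because any isomorphism $g\colon B\to B'$ witnessing $(B,f)\sim(B',f')$ acts as the identity on the $D^{\hash p+1}$ factor and so fixes each $\rho_j$. The image is a bona fide disordered simplex: the $\rho_j$ lie in distinct disks of $D^{\hash p+1}$ and are pairwise disjoint away from $\{b_0,b_1\}$; their complement, after pasting back $B$, shows that $\{f(\rho_j)\}$ is jointly non-separating in $S\hash D^{\hash n}$; and Lemma~\ref{lem:rhoord} guarantees condition (c), which is preserved by the orientation-preserving mapping class $f$.

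For the inverse, given $\sigma=\langle a_0,\dots,a_p\rangle\in\mathcal D^\nu(S\hash D^{\hash n})_p$ with representative arcs in the disordered anti-clockwise order, take a regular neighborhood $N$ of $I_0\cup I_1\cup a_0\cup\dots\cup a_p$ in $S\hash D^{\hash n}$. Because the arcs satisfy the disordered condition, the cyclic pattern of $\partial N$ relative to $\{b_0,b_1\}$ matches that of $D^{\hash p+1}$ itself, viewed as the regular neighborhood of $I_0\cup I_1\cup\rho_1\cup\dots\cup\rho_{p+1}$; hence there is an orientation-preserving homeomorphism $N\cong D^{\hash p+1}$ of bidecorated surfaces sending $a_j$ to $\rho_{j+1}$, unique up to the subgroup of the mapping class group of $D^{\hash p+1}$ fixing all the $\rho_j$ and $\partial D^{\hash p+1}$. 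Setting $B=\overline{(S\hash D^{\hash n})\setminus N}$ with its induced bidecoration and $f\colon B\hash D^{\hash p+1}\to S\hash D^{\hash n}$ the canonical gluing gives a class $[B,f]\in W_n(S,D)_p$ depending only on $\sigma$, and the two constructions are visibly mutually inverse.

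To verify compatibility with face maps, recall that the $i$th face of $[B,f]$ in $W_n$ is $[B\hash D,\,f\circ(\id_B\hash\tau_i)]$ with $\tau_i=\Phi(b^{-1}_{X^{\oplus i},X})\hash\id_{D^{\hash p-i}}\in\Aut_\M(D^{\hash p+1})$ supported on the first $i+1$ disks. Decomposing $b^{-1}_{X^{\oplus i},X}$ as $\sigma_i^{-1}\cdots\sigma_2^{-1}\sigma_1^{-1}$ (the inverse of the positive braid $\sigma_1\sigma_2\cdots\sigma_i$ realizing $b_{X^{\oplus i},X}$) and using $\Phi(\sigma_j^{-1})=T_j$ yields $\tau_i=T_iT_{i-1}\cdots T_1$. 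Under the re-association $(B\hash D)\hash D^{\hash p}=B\hash D^{\hash p+1}$, the arcs $\rho'_1,\dots,\rho'_p$ of the new $D^{\hash p}$ factor are identified with $\rho_2,\dots,\rho_{p+1}$, and an iterated application of Lemma~\ref{lem:Trho} produces $\tau_i(\rho_{j+1})=\rho_j$ for $1\leq j\leq i$ and $\tau_i(\rho_{j+1})=\rho_{j+1}$ for $i<j\leq p$: each $T_k$ with $k<j$ leaves $\rho_{j+1}$ fixed until $T_j$ sends $\rho_{j+1}$ to $\rho_j$, after which the remaining twists $T_{j+1},\dots,T_i$ fix $\rho_j$. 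Hence $\Psi$ carries the $i$th face of $[B,f]$ to $\langle f(\rho_1),\dots,f(\rho_i),f(\rho_{i+2}),\dots,f(\rho_{p+1})\rangle$, which is exactly the $i$th face of $\Psi([B,f])$.

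The main obstacle is the face-map calculation in the last paragraph: one must pin down $\Phi(b^{-1}_{X^{\oplus i},X})$ as the specific positive-twist composition $T_iT_{i-1}\cdots T_1$ and carry out the iterated application of Lemma~\ref{lem:Trho} without error. The recognition $N\cong D^{\hash p+1}$ in the inverse construction also uses the disordered condition essentially: any other cyclic pattern at $b_0,b_1$ would produce a neighborhood of a different topological type, so the match with $D^{\hash p+1}$ would fail.
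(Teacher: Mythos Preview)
Your proof is correct and follows essentially the same route as the paper's: both send a $p$-simplex to the tuple of images of the canonical arcs $\rho_j$ in the $D^{\hash p+1}$ factor, and both verify compatibility with face maps via the computation $T_i\cdots T_1(\rho_{j+1})=\rho_j$ for $j\le i$ using Lemma~\ref{lem:Trho}. The only cosmetic difference is that the paper establishes the levelwise bijection by an orbit--stabilizer argument (identifying both sides with $\Aut_\M(S\hash D^{\hash n})/\Aut_\M(S\hash D^{\hash n-p-1})$ and checking face maps on a single representative using equivariance), whereas you build an explicit inverse via regular neighbourhoods; both are fine.
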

\begin{proof}
  We first show that both $W_n(S,D)_p$
  and $\mathcal D^\nu(S\hash D^{\hash n})_p$ are isomorphic to
  $\Aut_\M(S\hash D^{\hash n})/\Aut_\M(S\hash D^{\hash n-p-1})$
  for every $p\ge 0$.
  This holds  by definition for the first semi-simplicial set.
  For $\mathcal D^\nu(S\hash D^{\hash n})_p$,
  it will follow from two facts:
  (1) the natural action of
  $$
  \Aut_\M (S\hash D^{\hash n}) = \pi_0\Homeo_\partial (S\hash D^{\hash
    n})
  $$
  on this set of $p$-simplices is transitive, and
  (2) the stabilizer of a $p$-simplex is isomorphic to
  $\Aut_\M(S\hash D^{\hash n-p-1})$.
  The first fact follows because the homeomorphism type
  of the complement $S\setminus \s$ of a collection of
  non-separating arcs $\s=\<a_0,\dots,a_p\>$ is determined
  by the orderings of the arcs at the endpoints as this
  determines the number of boundary components of the complement (see
  \cite[Lem 3.2]{harer85}),
  and the second from the fact that this complement is precisely diffeomorphic
  to $S\hash D^{\hash n-p-1}$ for any $p$-simplex in the disordered
  arc complex. Indeed, 
 this diffeomorphism type does not depend on the simplex by
 transitivity of the action, so it is
 enough to check the claim for any chosen
 simplex. Let $$\s_p=\<\rho_{n-p},\dots,\rho_n\>$$
 be the collection of arcs in $S\hash D^{\hash n}$ consisting of the
 cores $\rho_i$ of the last $p+1$ disks. Recall from Lemma~\ref{lem:rhoord}
 that this is a disordered simplex, once we note additionally that the arcs are
 also non-separating. Now Figure~\ref{fig:cutrho} shows
 that the operation of cutting along the core $\rho$ of a disk exactly undoes
 the gluing operation, which proves the claim in that case.
 \begin{figure}
   \def\svgwidth{0.8\textwidth}
   %% Creator: Inkscape 1.2.1 (9c6d41e410, 2022-07-14, custom), www.inkscape.org
%% PDF/EPS/PS + LaTeX output extension by Johan Engelen, 2010
%% Accompanies image file '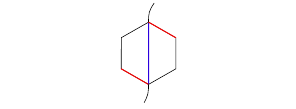' (pdf, eps, ps)
%%
%% To include the image in your LaTeX document, write
%%   \input{<filename>.pdf_tex}
%%  instead of
%%   \includegraphics{<filename>.pdf}
%% To scale the image, write
%%   \def\svgwidth{<desired width>}
%%   \input{<filename>.pdf_tex}
%%  instead of
%%   \includegraphics[width=<desired width>]{<filename>.pdf}
%%
%% Images with a different path to the parent latex file can
%% be accessed with the `import' package (which may need to be
%% installed) using
%%   \usepackage{import}
%% in the preamble, and then including the image with
%%   \import{<path to file>}{<filename>.pdf_tex}
%% Alternatively, one can specify
%%   \graphicspath{{<path to file>/}}
%% 
%% For more information, please see info/svg-inkscape on CTAN:
%%   http://tug.ctan.org/tex-archive/info/svg-inkscape
%%
\begingroup%
  \makeatletter%
  \providecommand\color[2][]{%
    \errmessage{(Inkscape) Color is used for the text in Inkscape, but the package 'color.sty' is not loaded}%
    \renewcommand\color[2][]{}%
  }%
  \providecommand\transparent[1]{%
    \errmessage{(Inkscape) Transparency is used (non-zero) for the text in Inkscape, but the package 'transparent.sty' is not loaded}%
    \renewcommand\transparent[1]{}%
  }%
  \providecommand\rotatebox[2]{#2}%
  \newcommand*\fsize{\dimexpr\f@size pt\relax}%
  \newcommand*\lineheight[1]{\fontsize{\fsize}{#1\fsize}\selectfont}%
  \ifx\svgwidth\undefined%
    \setlength{\unitlength}{85.03937008bp}%
    \ifx\svgscale\undefined%
      \relax%
    \else%
      \setlength{\unitlength}{\unitlength * \real{\svgscale}}%
    \fi%
  \else%
    \setlength{\unitlength}{\svgwidth}%
  \fi%
  \global\let\svgwidth\undefined%
  \global\let\svgscale\undefined%
  \makeatother%
  \begin{picture}(1,0.36666667)%
    \lineheight{1}%
    \setlength\tabcolsep{0pt}%
    \put(0,0){\includegraphics[width=\unitlength,page=1]{cutting-along-disc.pdf}}%
    \put(0.42234271,0.17565319){\makebox(0,0)[lt]{\lineheight{1.25}\smash{\begin{tabular}[t]{l}$D$\end{tabular}}}}%
    \put(0.5148025,0.21073919){\makebox(0,0)[lt]{\lineheight{1.25}\smash{\begin{tabular}[t]{l}$\rho$\end{tabular}}}}%
    \put(0.3983782,0.32612871){\makebox(0,0)[lt]{\lineheight{1.25}\smash{\begin{tabular}[t]{l}$S$\end{tabular}}}}%
    \put(0.61583871,0.03166797){\makebox(0,0)[lt]{\lineheight{1.25}\smash{\begin{tabular}[t]{l}$S$\end{tabular}}}}%
    \put(0,0){\includegraphics[width=\unitlength,page=2]{cutting-along-disc.pdf}}%
  \end{picture}%
\endgroup%

   \caption{Cutting along the core of a disk}\label{fig:cutrho}
   \end{figure}

Note that the actions on both sets of simplices are  given by post-composition with mapping  classes, where we think here of an arc as an isotopy class of embedding. 
There is then a unique equivariant 
isomorphism
$\phi_p\colon W_n(S,D)_p\xrightarrow{\cong} \mathcal D^\nu(S\hash D^{\hash n})_p$
taking the $p$-simplex 
$$
f_p=(S\hash D^{\hash n-p-1}, \,\id_{S\hash D^{\hash n}})  
$$
of $W_n(S,D)$ to the  $p$-simplex 
$\s_p=\<\rho_{n-p},\dots,\rho_n\>$ of the target already considered above.

We are left to check that the face maps $d_i$ correspond to each other under the isomorphisms $\phi_p$.
Because the face maps are equivariant with respect to the
$\Aut_\M (S\hash D^{\hash n})$-action in both cases,
and the actions are transitive,
it is enough to check that the face maps agree for the simplices
$f_p$ and $\s_p=\phi_p(f_p)$. By definition, 
$$
d_if_p
=((S\hash D^{\hash n-p-1})\hash D,\,
\id_{S\hash D^{\hash n-p-1}}\hash b_{D^{\hash i},D}^{-1}\hash \id_{D^{\hash p-i}})
$$
while
$$
d_i\s_p=\<
\rho_{n-p},\dots,\widehat{\rho_{n-p+i}},\dots,\rho_n\>$$ is the simplex obtained by forgetting the $(i+1)$st arc.
In particular, we immediately have that $d_0(f_p)=f_{p-1}$ and $d_0(\s_p)=\s_{p-1}=\phi_{p-1}(f_{p-1})$ giving that the face maps agree in that case.

For the remaining face maps, note that $$\id_{S\hash D^{\hash n-p-1}}\hash b_{D^{\hash i},D}^{-1}\oplus \id_{D^{\hash p-i}}=T_{n-p+i-1}\circ \dots\circ T_{n-p}\colon S\hash D^{\hash n}\rar S\hash D^{\hash n}$$
as composition of Dehn twists $T_i$ of
Section~\ref{sec:braidedaction}.
We need to compute the image of $\rho_{n-p+1},\dots,\rho_n$ under this map. 
By Lemma~\ref{lem:Trho}, we have that for $1\le j\le i$, 
\begin{align*}
  T_{n-p+i-1}\circ \dots\circ T_{n-p}(\rho_{n-p+j})&=T_{n-p+i-1}\circ \dots\circ T_{n-p+j-1}(\rho_{n-p+j})\\
                                                   &=T_{n-p+i-1}\circ \dots\circ T_{n-p+j}(\rho_{n-p+j-1}) \\
                                                     &=\rho_{n-p+j-1}
\end{align*}
while for $i+1\le j\le p$, 
$$ T_{n-p+i-1}\circ \dots\circ T_{n-p}(\rho_{n-p+j})=\rho_{n-p+j}.$$
Hence $d_i(f_p)$ takes the arcs $\rho_{n-p+1},\dots,\rho_n$ to the
arcs  $$\rho_{n-p},\dots,\rho_{n-p+i-1},\rho_{n-p+i+1},\dots,\rho_n,$$
i.e.~precisely to the arcs of $d_i(\s_p)$. So we indeed have that
$\phi_{p-1}(d_i(f_p))=d_i(\phi_{p-1}(f_p))$, which finishes the
proof. 
      \end{proof}

\subsection{Coefficient systems}

Having identified the space of destabilizations with the semi-simplicial
set of disordered arcs in Proposition~\ref{prop:Wiso},
we can now input the connectivity computation of the disordered arc complex
of Section~\ref{sec:high-cnt}
into the general stability theorem of \cite{krannich19}.
To state the resulting stability theorem in full generality,
we need to introduce the notions of  (split) finite degree coefficient systems.
We follow \cite[Sec 4]{krannich19}, which generalizes  \cite[4.1-4]{RWW17} that unify the earlier definitions of  Dwyer for the
general linear groups \cite{Dwyer} and Ivanov for the mapping class groups
\cite{IvanovTwisted}.
(The papers \cite{krannich19,RWW17}  consider in addition abelian coefficient
systems,  but these are not relevant here, because the abelianization of the
mapping class group of surfaces of large enough genus is trivial by a theorem
of Mumford--Birman--Powell, see Lemma 1.1 in~\cite{harer83}.) 

\medskip

Fix a bidecorated surface $S=(S,m,\varphi)$,
and let $D$ be the bidecorated disk as above.
Definition~4.1 of
\cite{krannich19} becomes in our case: 
\begin{definition}
  A {\em coefficient system} for the groups
  $\Aut_\M(S\hash D^{\hash n})$ with respect to the stabilization by $D$ is
a collection of  $\Z[\Aut(S\hash D^{\hash n})]$-modules $M_n$ for
$n\ge 0$, together with  maps $s_n\colon M_n\to M_{n+1}$ that are
equivariant with respect to the stabilization map
$\Aut(S\hash D^{\hash n})\xrightarrow{\hash D} \Aut(S\hash D^{\hash n+1})$, 
% $G_n\to G_{n+1}$,
satisfying the following condition:
\begin{equation}\label{coef-cond}
  T_{n+1}\in \Aut(S\hash D^{\hash n+2})\ \textrm{acts trivially on the
    image of } M_n \xrightarrow{s_{n+1}\circ s_n} M_{n+2}
\end{equation}
for $T_{n+1}$ the Dehn twist of Section~\ref{sec:braidedaction} with support the last
two disks in $S\hash D^{\hash n+2}$. 
\end{definition}
We will encode the data of  a coefficient system as a pair $(F,\s^F)$
with 
$$F\colon \M|_{S,D} \rar \operatorname{Mod}_\Z$$
a functor from the full subcategory of $\M$ on the objects $S\hash D^{\hash n}$
for $n\ge 0$ to abelian groups, where $M_n=F(S\hash D^{\hash
  n})$ with its $\Aut(S\hash D^{\hash n})$-action induced by $F$, and 
$$\s^F\colon F(-) \rar F(-\hash D)$$
is a natural transformation encoding the suspension maps $s_n$,
where we assume that $F(\id \hash T)$ acts trivially on the image of
$(\s^F)^2\colon F(-) \to F(-\hash D^{\hash 2})$ for $T$ the Dehn twist
supported on the added disks $D^{\hash 2}$.

\medskip

Given a coefficient system $F$, we define its {\em suspension} $\Sigma F\colon \M|_{S,D} \rar \operatorname{Mod}_\Z$ by $\Si F(-)=F(-\hash D)$ with
$$\s^{\Si F}\colon \Si F(-)=F(-\hash D) \xrightarrow{\s^F} F(-\hash D^{\hash 2}) \xrightarrow{\id\hash T} F(-\hash D^{\hash 2})=\Si F(-\hash D), $$
where one checks that the triviality condition \ref{coef-cond} is
satisfied with this choice of structure map $\s^{\Si F}$. (See
\cite[Def 4.4]{krannich19}.)

The structure map $\s^F$ induces a natural transformation $F\to \Si F$, called the {\em suspension map}. We define the {\em kernel} $\ker F$ and {\em cokernel} $\coker F$ to be the kernel and cokernel functors of that natural transformation. 
We call $F$ {\em split} if the suspension map is split injective in the category of coefficient systems. 

\begin{definition}\label{def:coef-system}\cite[Def 4.10]{RWW17} A coefficient system $F$ is
\begin{enumerate}
\item of {\em (split) degree $-1$ at $N$} if $F(S\hash(D^{\hash n}))=0$ for all $n\ge N$; 
\item of {\em degree $k\ge 0$ at $N$} if $\ker(F)$ has degree $-1$ at $N$ and $\coker(F)$ has degree $(k-1)$ at $(N-1)$; 
\item of {\em split degree $k\ge 0$ at $N$} if $F$ is split and $\coker(F)$ is of split degree $(k-1)$ at $(N-1)$.
  \end{enumerate}
\end{definition}
\begin{example}
  \leavevmode
  \begin{enumerate}\label{exmp:coef-systems}
  \item A coefficient system $F$ is of degree $0$ at $0$
    if and only if $\sigma^F$ is a natural isomorphism. This is in
    particular the case for constant coefficient systems. 
  \item The functor $F_k\colon\M\to  \operatorname{Mod}_\Z$ defined by
$$F_k(S)=H_1(S,\Z)^{\otimes k}$$
is a split coefficient system of degree $k$ at $0$. 
(This is essentially a result of Ivanov \cite[Sec 2.8]{IvanovTwisted},
who considers a version of the composite stabilization $\hash D^{\hash 2}$. 
See also \cite[Ex 4.3]{boldsen12} for the case $k=1$, and \cite[Lem 2.9]{Soulie} that proves this in a very general
set-up, though in the case of a braided groupoid acting over itself only.)
\item Given a $k$-connected space $X$,
  the coefficient system $F_n^k\colon\M\to\text{Mod}_\Z$ defined by
  $$
  F_n^k(S) = H_n(\operatorname{Map}(S/\partial S),X),
  $$
  which appears in the work of Cohen--Madsen~\cite{CohMad},
  is a coefficient system of degree $\floor{n/k}$ (see \cite[Ex 4.3]{boldsen12}).
  \end{enumerate}
\end{example}

\begin{remark}\label{rmk:different-category}
Although the above examples all makes sense in the different set-ups
considered in the literature, one should keep in mind that there are
variations in what precisely a finite degree coefficient system for
the mapping class groups of surfaces means in e.g.~the papers
\cite{IvanovTwisted,CohMad,boldsen12,RWW17} and \cite{krannich19}.
This is due to two facts: first, the definition of the coefficient
system depends on the category of surfaces considered and on the stabilization map(s) one works with,
and second, the triviality
condition \eqref{coef-cond} arising from Krannich's framework is
actually weaker  than the one used in earlier frameworks, see
e.g.~\cite[Rem 7.9]{krannich19}.

In addition, the paper 
\cite{GKRW19MCG} uses a homological
condition instead of a finite degree condition (see 5.5.1 in that
paper). The relationship between that condition and finite degree
conditions is discussed in \cite[Rem 19.11]{GKRW18cell}.
  \end{remark}

  \subsection{The stability theorem}

We are now ready to state our main theorem: 
   \begin{theorem}\label{thm:stab} Let $S=(S,m,\phi)$ be an object of
     $\M$ with $m$ odd, i.e.~such that $I_0,I_1$ are in the same boundary component. 
        Let $F\colon\M|_{S,D}\to\operatorname{Mod}_\Z$ be a coefficient system
        and write $F_n = F(S\hash D^{\hash n})$.
        The map
        $$
        H_i(\Aut_\M(S\hash D^{\hash n});F_n)
        \rar H_i(\Aut_\M(S\hash D^{\hash n+1});F_{n+1})
        $$ 
        is
        \begin{enumerate}
        \item an epimorphism
          for $i\leq\frac{n} 3$
          and an isomorphism for $i\leq\frac{n-3} 3$ if $F$ is constant.
        \item an epimorphism
          for $i\leq \frac{n-3k-2} 3$
          and an isomorphism for $i\leq\frac{n - 3k - 5} 3$
          if $F$ has degree $k$
          at $N\geq 0$ and $n>N$.
        \item an epimorphism
          for $i\leq\frac{n-k-2} 3 $
          and an isomorphism for $i\leq\frac{n-k-5} 3$
          if $F$ has split degree $k$
          at $N\geq 0$ and $n>N$.
        \end{enumerate}
      \end{theorem}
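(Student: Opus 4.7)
The strategy is to apply the main stability theorems of \cite{krannich19} (Theorem A for constant and finite-degree coefficients, Theorem C for split coefficients) to the $E_1$--module $B\M$ over the $E_2$--algebra $B\BB$ with stabilizing object $D$. All the categorical input has been assembled in Section~\ref{sec:braidedaction}: the Yang--Baxter operator $T_1^{-1}\in\Aut_\M(D\hash D)$ produces a strong monoidal functor $\Phi\colon\BB\to\M$ and thereby an action of $\BB$ on $\M$, which on classifying spaces gives the required $E_2$-structure.

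By the injectivity statement Proposition~\ref{prop:Mmonoid}, combined with \cite[Lem~7.6]{krannich19}, Krannich's general space of destabilizations is levelwise homotopy-discrete and agrees with the semi-simplicial set $W_n(S,D)_\bullet$ of Definition~\ref{def:WS}. Proposition~\ref{prop:Wiso} identifies $W_n(S,D)_\bullet$ with the disordered arc complex $\mathcal D^{\nu_n}(S\hash D^{\hash n})_\bullet$, whose realization is homeomorphic to $\mathcal D^{\nu_n}(S\hash D^{\hash n})$. Hence the connectivity result Theorem~\ref{thm:disord-cnt} can be fed directly into Krannich's machine.

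Next I would translate the connectivity bound into a function of $n$ alone. Because $m$ is odd, the pair $(g_n,\nu_n)$ of $S\hash D^{\hash n}$ alternates: each stabilization either adds one boundary component (when $\nu=1$) without changing the genus, or merges two boundary components while raising the genus by one (when $\nu=2$). Writing $g_0=g(S)$, an Euler characteristic bookkeeping yields
\[
  2g_n+\nu_n = 2g_0+n+1
\]
for all $n\geq 0$. Theorem~\ref{thm:disord-cnt} then shows the destabilization space is $\lfloor(n+2g_0-4)/3\rfloor$-connected, and in particular at least $\lfloor(n-4)/3\rfloor$-connected uniformly in $g_0\ge 0$. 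This is exactly connectivity of slope $1/3$ in the normalization used by Krannich's stability theorem.

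Feeding this estimate into \cite[Thm~A, Thm~C]{krannich19} produces the three stated ranges mechanically: the denominator $3$ is inherited from the connectivity slope, and the shifts $3k$ (resp.\ $k$) in the numerators are the standard cost of a degree-$k$ (resp.\ split degree-$k$) coefficient system. One only needs to check that our Definition~\ref{def:coef-system} matches \cite[Def~4.1, Def~4.10]{krannich19}: the triviality condition on $T_{n+1}$ is precisely Krannich's triviality condition on the braiding square, since under $\Phi$ the braiding $b_{D,D}$ maps to $T_1^{-1}$. The main obstacle is therefore bookkeeping rather than substantive mathematics: the genuine work has already been done in Section~\ref{sec:high-cnt} (connectivity of the arc complex), in Section~\ref{sec:braidedaction} (the Yang--Baxter braided action), and in Section~\ref{sec:disk} (identification of the destabilization space); the proof of Theorem~\ref{thm:stab} itself is just the verification that these inputs slot into Krannich's theorem with the stated constants.
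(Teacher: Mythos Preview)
Your proposal is correct and follows essentially the same route as the paper: identify the destabilization space with the disordered arc complex via Proposition~\ref{prop:Wiso}, use Proposition~\ref{prop:Mmonoid} and \cite[Lem~7.6]{krannich19} to pass to the semi-simplicial set, convert Theorem~\ref{thm:disord-cnt} into the bound $\lfloor (n-4)/3\rfloor$, and then invoke Krannich's theorems. Two small remarks: in \cite{krannich19} it is Theorem~C (not~A) that handles both the finite-degree and split cases, and the paper makes explicit the bookkeeping you allude to by citing \cite[Rem~2.7, Rem~2.24]{krannich19} with the grading $g_\M(S\hash D^{\hash n})=n-2$ and $m=4$; but your exact computation $2g_n+\nu_n=2g_0+n+1$ is a cleaner way to reach the same connectivity bound than the paper's comparison with $D^{\hash n+1}$.
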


      \begin{remark}
We have stated the theorem in the case of an initial surface $S$ with
$I_0$ and $I_1$ in the same boundary component for simplicity. The
case of a surface $S'$ where the two intervals lie in different components is actually
also included in the statement, by writing $S'=S\hash D$ for $S$ of
the previous type, or considering $S'\hash D$ if $S'$ does not admit
such a decomposition. Indeed, as we have already seen in
Section~\ref{sec:category} (see Figure~\ref{fig:glueD}),
gluing in a disk exactly changes whether
$I_0$ and $I_1$ are in the same boundary or not. 
        \end{remark}

We will first show that the above results implies the two main
theorems stated in the introduction.

\begin{proof}[Proof of Theorems~\ref{thmintro:stab1}
  and~\ref{thmintro:stab2} from Theorem~\ref{thm:stab}]
Let $S_{0,r}^s$ be a surface of genus 0 with $r\ge 1$ boundary components and
$s$ punctures, and consider the associated object
$S=(S_{0,r}^s,1,\phi)$ of $\M$, with two marked intervals in the first
boundary component. 
Then $S\hash D^{\hash 2g}$ has the form  $(S_{g,r}^s,1+2g,\phi)$ while
$S\hash D^{\hash 2g+1}$ has the form
$(S_{g,r+1}^s,2+2g,\phi)$. Moreover, the maps
 $$S\hash D^{\hash 2g} \xrightarrow{\hash D} S \hash D^{\hash 2g+1}
 \xrightarrow{\hash D}S\hash D^{\hash 2g+2}$$
 precisely induce on automorphism groups in $\M$ the two maps appearing in Theorems~\ref{thmintro:stab1}
 and~\ref{thmintro:stab2}.
 
The fact that the first map is always injective in homology follows from the fact
that postcomposing the map  $S_{g,r}^s\to S_{g,r+1}^s$,
defined by the sum $\hash D$, with the map $S_{g,r+1}^s \to
S_{g,r+1}^s\cup_{S^1}D^2\simeq S_{g,r}^s$  filling in one of the newly created
boundary component, is homotopic to the identity. 
Now Theorem~\ref{thm:stab}(a) gives that the  map
$$H_i(\Aut_{\M}(S\hash D^{\hash 2g})) \xrightarrow{\hash D} H_i(\Aut_{\M}(S \hash D^{\hash 2g+1}))$$
is surjective  for
$i\leq\frac{2g} 3$ in homology with constant coefficients.  Given that the map is always injective, we get an isomorphism 
in that same range, proving the first part of
Theorem~\ref{thmintro:stab1}. Applying (b) and (c) instead gives
Theorem~\ref{thmintro:stab2} for the first map.

For the second map, we now apply Theorem~\ref{thm:stab} in the case $n=2g+1$, but
in that case, there is no additional argument for injectivity, so the
bounds translate directly to surjectivity and isomorphism bounds. 
  \end{proof}

      \begin{proof}[Proof of Theorem~\ref{thm:stab}]
        Proposition~\ref{prop:Wiso} together with Theorem~\ref{thm:disord-cnt}
        give that $W_n(S,D)_\bullet$ is $\left(\frac{2g+\nu-5} 3\right)$-connected, for $g$ the genus of $S\hash D^{\hash n}$ and $\nu=1$ if $I_0$ and $I_1$ are in the same boundary component of  $S\hash D^{\hash n}$, which is the case precisely when $n$ is even, and $\nu=2$ otherwise.  
    The surface  $S\hash D^{\hash n}$  has genus greater than or equal to the genus of  $D\hash D^{\hash n}$, that is $\frac{n}{2} $ if $n$ is even and $\frac{n-1}{2}$ if $n$ is odd (see Lemma~\ref{lem:surface-type}).
Hence $2g+\nu\ge n+1$ is both cases, and $W_n(S,D)_\bullet$ 
is at least $\left(\frac{n-4} 3\right)$-connected.

Now $W_n(S,D)_\bullet$  is the semi-simplicial set denoted $W^{\text{RW}}(S\hash D^{\hash n})_\bullet$ in \cite{krannich19} (see Definition~7.5 in that paper). By Lemma 7.6 in the same paper, using Proposition~\ref{prop:Mmonoid}, this semi-simplicial set has the same connectivity as the semi-simplicial space  $W(S\hash D^{\hash n})_\bullet$ of \cite{krannich19}, which by Remark~2.7 of that paper determines the connectivity assumption of Theorem A in that paper: the canonical resolution of the assumption of the theorem is $m$-connected, if and only if the space $W(S\hash D^{\hash n})_\bullet$ is $(m-1)$-connected.
Given that  $W(S\hash D^{\hash n})_\bullet$ is $\left(\frac{n-4}
  3\right)$--connected, we have that the canonical resolution of
 is $\left(\frac{n-4+3} 3\right)$--connected. Hence we can apply
 \cite[Thm A]{krannich19} with $k=3$ and grading $g_{\M}:\M|_{S,D}\to
 \N$ given by $g_{\M}(S\hash D^{\hash n})=n-2$; see also \cite[Rem
        2.24]{krannich19}, where we can take $m=4$.
The theorem, with the improvement given by (i) in the remark,  then gives that
        $$
        H_i(\text{Aut}_{\M}(S\hash D^{\hash n});\Z )\rar
        H_i(\text{Aut}_{\M}(S\hash D^{\hash n+1});\Z )
        $$
        is an isomorphism for $i\leq\frac{n-3} 3$ and
        an epimorphism for $i\leq \frac{n}3$, giving the stated result
        in the case of constant coefficients. 
        For a coefficient system $F$ of degree $k$ at $N$,
        \cite[Thm C]{krannich19} gives that
          $$
        H_i(\text{Aut}_{\M}(S\hash D^{\hash n});F_n )\rar
        H_i(\text{Aut}_{\M}(S\hash D^{\hash n+1});F_{n+1} )
        $$
        is an isomorphism for $i\leq\frac{n-3k-5} 3$ and
        an epimorphism for $i\leq \frac{n-3k -2} 3 $
        for $n>N$, improved to an isomorphism for $i\leq\frac{n-k-5} 3$ and
        an epimorphism for $i\leq \frac{n-k -2} 3 $ if $F$ is split. 
      \end{proof}

      \begin{remark}[Optimality of the stability bounds]\label{rem:optimal2}
Combining the two maps in Theorem~\ref{thmintro:stab1}, 
we obtain that the genus stabilization 
\begin{equation*}
  H_i(\Gamma(S_{g,r}^s);\Z )\to H_i(\Gamma(S^s_{g+1,r}); \Z) 
\end{equation*}
is an  epimorphism when $i\leq\frac{2g} 3$
and an  isomorphism when $i\leq\frac{2g-2} 3$.
The slope $\frac{2}{3}$ is known to be optimal by a computation of
Morita \cite{Morita}, with optimal isomorphism range
since for instance $H_1(\Gamma (S_{2,r});\Z )\to H_1(\Gamma (S_{3,r});\Z )$
is not injective as the source is isomorphic to $\Z /12$ and
the target is trivial, see e.g.~\cite[Theorem 5.1]{korkmaz}.
Our combined genus epimorphism range, on the other hand, falls short
of the range  $i\leq\frac{2g+1} 3$, as given in
\cite{GKRW19MCG}, a range that is optimal by Morita's computation
(see Theorem B (i) of \cite{GKRW19MCG}).

        Our results for twisted coefficients are most easily compared with those
        of Boldsen  \cite[Thm 3]{boldsen12}, whose coefficient systems are 
        coefficient systems of finite split degree in our sense, 
        though with a stricter triviality condition upon double stabilization.
        For these coefficient systems, he obtains slightly better
        ranges, with improvement $+\rfrac{2}{3}$ for the first
        map and $+\rfrac{5}{3}$ for the second.
The papers \cite{RWW17,GKRW19MCG} only consider genus stability. 
        In \cite{RWW17}, the stability slope obtained is only
        $\frac{1}{2}$, while in \cite[Sec 5.5.1]{GKRW19MCG}, the
        finite degree condition is replaced by a more general homological
        condition that applies to some finite coefficient systems
        \cite[Sec 19.2]{GKRW18cell}. In the particular
        case of the $k$th tensor
        power of the first homology of the surface, they do however
        only get the epimorphism range $i\leq\frac{ 2g-2k+1} 3$
        and  isomorphism range $i\leq\frac{ 2g-2k-2} 3$, see Example 5.22
        in that paper. 
      \end{remark}

\section{Braiding and homological stability for groups}
\label{sec:braiding}
In order to use the framework of Krannich~\cite{krannich19}
to prove homological stability for a sequence of groups,
one needs the structure of an ``$E_1$-module over an $E_2$-algebra''.
We give in Proposition~\ref{prop:YB} below a simple way to construct such a module structure,
in terms of Yang--Baxter operators. 
Compared to earlier approaches to homological stability such as~\cite{RWW17},
which Krannich's work generalizes,
this has the advantage of being very lightweight.
Instead of having to provide the structure of a braiding on the monoidal category whose automorphism
groups one is interested in,
it suffices to provide a single morphism satisfying a simple
equation.

Our main example of a Yang--Baxter operator is the
inverse Dehn twist $T_1^{-1}\in \Aut_{\M}(D\hash D)$, defined in
Section~\ref{sec:braidedaction} and used to prove our main result.
In Section~\ref{sec:notbraided}, we show that this Yang--Baxter operator is not part
of a braided monoidal structure on the category $\M$, but gives
instead a twisted version of such a structure.

\subsection{Yang--Baxter operators and braid groupoid actions}\label{sec:YB}
Let $\mathcal X = (\mathcal X,\oplus ,\mathbbm 1)$ be a monoidal category.
A {\em Yang--Baxter operator}  in $\mathcal X$
is a pair $(X,\tau )$ consisting of an object $X\in\mathcal X$
and a morphism $\tau\in \Aut_{\mathcal C}(X\oplus X)$,
satisfying the Yang--Baxter equation
$$
(\tau\oplus 1)(1\oplus\tau)(\tau\oplus 1)= (1\oplus\tau)(\tau\oplus 1)(1\oplus\tau ) \in \Aut_{\mathcal C}(X\oplus X\oplus X),
$$
where we suppress associators from the notation.

Yang--Baxter operators are closely related to the braid groupoid: 
Recall from Section~\ref{sec:braidedaction} the braid groupoid $\BB$, with objects
the natural numbers and only non-trivial morphisms
$\Aut_\BB(n)=B_n$.  
A variant of the coherence theorem for braided
monoidal categories says that the category of strong monoidal functors
from the braid groupoid into $\mathcal X$ is equivalent to a naturally
defined category of Yang--Baxter operators in $\mathcal X$
\cite[Prop 2.2]{joyal-street}.\footnote{
In other words, the pair consisting of the braid groupoid $\BB$
and the Yang--Baxter operator $\sigma_1\in\text{Aut}_\BB (2)$,
is the initial monoidal category
with a distinguished Yang--Baxter element.}
To a Yang--Baxter operator $(X,\tau )$ in $\mathcal X$,
this equivalence associates
the strong monoidal functor $\Phi_{X,\tau}\colon\BB\to\mathcal X$ given by
$\Phi_{X,\tau} (n) = X^{\oplus n}$ on objects,
and on morphisms by letting
$$
\Phi_{X,\tau}\colon B_n\to\Aut_{\mathcal X}(X^{\oplus n})
$$
send the $i$th standard generator $\sigma_i$ to
$\text{id}_{X^{\oplus i-1}}\oplus\tau\oplus\text{id}_{X^{\oplus n-i-1}}$,
where the required maps
$\Phi_{X,\tau} (m)\oplus \Phi_{X,\tau} (n)\to \Phi_{X,\tau} (m+n)$
are given by the monoidal structure of  $\mathcal X$.

\smallskip

Suppose now that the monoidal category
$\mathcal X$ acts on a category $\mathcal M$
via a functor $\mathcal M\times\mathcal X\to\mathcal M$,
which we also denote by $\oplus$, compatible with the
monoidal sum in $\mathcal X$.
The following result shows that the choice of a Yang--Baxter operator
defines an action of the braid groupoid $\BB$ on $\mathcal M$, and hence is
appropriate data to apply the stability framework of \cite{krannich19}:

\begin{proposition}\label{prop:YB}
  Let $(\mathcal X,\oplus ,\mathbbm 1)$ be a monoidal category
  with $\tau\in \Aut_{\mathcal X}(X\oplus X)$
  a Yang--Baxter operator in $\mathcal X$.
  Suppose $\mathcal X$ acts on a category $\mathcal M$.
  Then there is an action of the braid groupoid
  $$
  \alpha_\tau\colon \mathcal M\times\BB\to\mathcal M
  $$
  given on objects by $\alpha_\tau(A,n)= A\oplus X^{\oplus n}$
  and determined on morphisms by $$\alpha_\tau(f,\sigma_i )=f\oplus\text{id}_{X^{\oplus i-1}}\oplus\tau\oplus\text{id}_{X^{\oplus n - i -1}},$$
  for $\sigma_i$ the $i$th elementary braid in $B_n$. 
  Furthermore, taking classifying spaces this endows $B\mathcal M$
  with the structure of an $E_1$-module over the $E_2$-algebra $B\BB$.
\end{proposition}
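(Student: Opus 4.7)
The plan is to obtain $\alpha_\tau$ as a composite functor built from a strong monoidal functor out of the braid groupoid, and then transport the module structure to classifying spaces using standard facts recalled already in \cite{krannich19}. First, I would apply the coherence result \cite[Prop 2.2]{joyal-street} stated in the paragraph preceding the proposition: the Yang--Baxter operator $(X,\tau)$ determines a strong monoidal functor
$$
\Phi_{X,\tau}\colon \BB\rar\mathcal X
$$
sending $n\mapsto X^{\oplus n}$ and the $i$-th standard generator $\sigma_i\in B_n$ to $\id_{X^{\oplus i-1}}\oplus\tau\oplus\id_{X^{\oplus n-i-1}}$. This step uses exactly the Yang--Baxter equation satisfied by $\tau$, and already exhausts the hypothesis.

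Next, I would define the desired functor as the composite
$$
\alpha_\tau\colon \mathcal M\times\BB\xrightarrow{\id_{\mathcal M}\times\Phi_{X,\tau}}\mathcal M\times\mathcal X\xrightarrow{\oplus}\mathcal M,
$$
where the second arrow is the given action of $\mathcal X$ on $\mathcal M$. The formulas for $\alpha_\tau$ on objects and morphisms announced in the statement then follow at once from the explicit description of $\Phi_{X,\tau}$. To see that this equips $\mathcal M$ with the structure of a right $\BB$-module, i.e.\ that $\alpha_\tau$ is compatible with the monoidal sum of $\BB$ up to coherent natural isomorphism, one combines two facts: the functor $\Phi_{X,\tau}$ is \emph{strong} monoidal (so the canonical comparison maps $\Phi_{X,\tau}(n)\oplus \Phi_{X,\tau}(m)\to \Phi_{X,\tau}(n+m)$ are invertible and coherent), and $\oplus\colon \mathcal M\times\mathcal X\to\mathcal M$ is a module action by hypothesis. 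Beyond a short diagram-chase, there is nothing further to verify.

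Finally, for the classifying space statement, I would appeal to the general principle, used already at \cite[Lem 7.2]{krannich19}, that the classifying space of a braided monoidal groupoid is naturally an $E_2$-algebra and that a module groupoid over it gives rise on classifying spaces to an $E_1$-module over this $E_2$-algebra. Applied to the braided monoidal groupoid $\BB$ and to the module $(\mathcal M,\alpha_\tau)$ just constructed, this yields the claimed $E_1$-module structure of $B\mathcal M$ over the $E_2$-algebra $B\BB$.

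The hard part is not genuinely hard: the only subtle point is that a Yang--Baxter operator suffices to produce a \emph{strong} monoidal functor from $\BB$, rather than merely a lax one. This is precisely the content of the coherence theorem cited above, so the proof reduces to invoking it and then composing with the given $\mathcal X$-action.
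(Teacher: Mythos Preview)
Your proposal is correct and follows essentially the same approach as the paper: define $\alpha_\tau$ as the composite $(-)\oplus\Phi_{X,\tau}(-)$ using the strong monoidal functor $\Phi_{X,\tau}\colon\BB\to\mathcal X$ furnished by the Yang--Baxter operator, observe that this makes $\mathcal M$ a module over $\BB$, and then invoke \cite[Lem 7.2]{krannich19} together with the fact that $\BB$ is braided monoidal. Your write-up is somewhat more detailed than the paper's, but the argument is the same.
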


Note that if we are interested in homological stability for
stabilization by $X$ for the automorphism groups
$G_n:=\Aut_{\mathcal M}(A\oplus X^{\oplus n})$
for some object $A$ of $\mathcal M$,
only  the full subcategory $\mathcal M_{A,X}\subseteq\mathcal M$
  spanned by objects of the form $A\oplus X^{\oplus n}$, is relevant. 
  So for stability purposes, it is enough to consider the
  subfunctor
  $$
  \alpha_\tau\colon \mathcal M_{A,X}\times\BB\to\mathcal M_{A,X}. 
  $$
In fact, to make sure that the structure of $E_1$--module over the
$E_2$--algebra $B\BB$ is graded, one can even replace the category
$\mathcal M_{A,X}$ by a category with objects the natural
    numbers and setting $\text{Aut}(n) = \text{Aut}_{\mathcal M}(A\oplus
    X^{\oplus n})$, avoiding any potential issue  coming  from unwanted
 equalities $A\oplus X^{\oplus n} = A\oplus X^{\oplus m}$ for $m\neq
 n$.
 
\begin{proof}
The functor
%restriction of scalars along the associated strong monoidal functor
%$F_\tau \colon\BB\to\mathcal G$; that is, we let
$
\alpha_\tau\colon \mathcal M\times\BB\to\mathcal M
$
is defined as the composite functor  $$\alpha(-,-)= (-)\oplus \Phi_{X,\tau} (-),$$ for
$\Phi_{X,\tau}\colon \BB\to \mathcal X$ as above.  
The result follows from \cite[Lem 7.2]{krannich19} because $\alpha$
makes $\mathcal M$ into a module over $\BB$  and $\BB$ is braided monoidal.
\end{proof}

\begin{example}\label{ex:braided}
If $\mathcal{X}=(\mathcal X,\oplus ,\mathbbm 1)$ admits a braiding
$b$,  then $\tau=b_{X,X}\in \Aut_{\mathcal X}(X\oplus X)$
is a Yang--Baxter operator for any object $X$. For
$\mathcal X$ a groupoid acting on itself
or $\mathcal X$ acting on a category $\mathcal M$,
this  recovers the basic set-up for homological stability
of the paper \cite{RWW17}, or Section 7 of \cite{krannich19}.
  \end{example}

  \begin{example}[Mapping class groups of surfaces]\label{exmp:mcg-YB}
    As explained above, a Yang--Baxter operator $\tau\in
    \Aut_{\mathcal X}(X\oplus X)$
    gives in particular a collection of homomorphisms
    $\Phi_{X,\tau}\colon B_n\to\Aut_{\mathcal X}(X^{\oplus n})$
    from the braid groups to the automorphism group of $n$ copies of $X$.
    There are two standard ways to embed braid groups in mapping class
    groups of surfaces, and we explain here how they both come from
    Yang--Baxter elements in appropriate categories of surfaces. 
 \begin{enumerate}
 \item  Let $\M$ be the category of bidecorated surfaces of
   Section~\ref{sec:category}.
   As explained in Section~\ref{sec:braidedaction}, 
   the  Dehn twist
   $T\in \Aut_{\M}(D\hash
   D)\cong\pi_0\operatorname{Homeo}_\partial(S^1\x I)\cong \Z$, or its
   inverse $T^{-1}$, 
   is a Yang--Baxter operator.
   The associated map $\Phi_{D,T}\colon B_n\to \Aut_{\M}(D^{\hash n})$
   is the embedding of braid group in the mapping class
   groups of $S_{g,1}$ (when $n=2g+1$) and of $S_{g,2}$ (when
   $n=2g+2$) associated to Dehn twists along the chain of embedded
   curves in the surfaces described in Lemma~\ref{lem:chain}. This
   embedding goes back at least to the work of Birman
   and Hilden \cite{BH1,BH2}.
 \item
   Let ${\mathbf M}_1$ denote instead the category of surfaces decorated
   by a single interval, with monoidal structure $\oplus$ defined just as in
   the case of $\mathbf M_1$ but gluing only along one interval. Then $\mathbf M_1$ in
   braided monoidal, see \cite[Sec 5.6.1]{RWW17}. Hence by
   Example~\ref{ex:braided}, for any
   object $X$ of $\mathbf M_1$, we have a Yang--Baxter element $\tau_X\in
   \Aut_{\mathbf M_1}(X\oplus X)$. For $X = S_{1,2}$,
   this can be used to prove genus stabilization (albeit with the suboptimal slope $\rfrac 1 2$),
   and in the case $X=S^1\times I$ marked
   by an interval in one of its boundary components, we have that
   $X^{\oplus n}$ has underlying surface an $n$-legged pair of
   pants $D^2\backslash (\sqcup_n \mathring{D}^2)$ and the associated morphism 
   $$
   \Phi_{X,\tau_X}\colon
   B_n\to   \Aut_{\mathbf M_1}(X^{\oplus n})
   = \pi_0\operatorname{Homeo}_\partial(D^2\backslash (\sqcup_n\mathring{D}^2))
   $$ 
is the standard embedding of the braid group as the subgroup of the
mapping class group of the multi-legged pants that does not twist the
legs, see e.g.~\cite[Sec 5.6.1]{RWW17}.
 \end{enumerate}
 We will show in Proposition~\ref{prop:nobraiding} below that the
 Yang--Baxter operator $T$ of the first example, in
 the category $\M$, does not come from a braiding in $\M$. 
 \end{example}

\subsection{Homological stability from Yang--Baxter elements}

Suppose we are given the data of a monoidal category
$(\mathcal X,\oplus ,\mathbbm 1)$ acting on a category $\mathcal M$,
along with a choice of stabilizing object $X\in\mathcal X$
and Yang--Baxter operator $\tau\in\text{Aut}_{\mathcal X}(X\oplus X)$.
Proposition~\ref{prop:YB} above allows to apply \cite[Thm
A]{krannich19}, which in this case says that for any
$A\in\mathcal M$, there  is a sequence of simplicial spaces $W_n(X,A)_\bullet$, for $n\ge
0$, so that if $W_n(X,A)$ is highly-connected
for large $n$, then the sequence
$$
\text{Aut}_{\mathcal M}(A)\xrightarrow{{-}\oplus X}
\text{Aut}_{\mathcal M}(A\oplus X)\xrightarrow{{-}\oplus X}
\text{Aut}_{\mathcal M}(A\oplus X\oplus X)\xrightarrow{{-}\oplus X}\cdots
$$
satisfies homological stability. Theorem B of the same paper gives in
addition a stability statement with twisted coefficients.
Under an injectivity assumption of the form of
Proposition~\ref{prop:Mmonoid}, this simplicial space is homotopy
discrete, and modeled by the space of destabilizations
as described in Definition~\ref{def:WS}.

\begin{remark}
  The fact that $(X,\tau )$ is a Yang--Baxter operator
  is precisely what is needed
  for the collection of sets $W_n(A,X)_p$ and maps
  $d_i\colon W_n(A,X)_p\to W_n(A,X)_{p-1}$,
  defined as in Definition \ref{def:WS},
  to assemble into a semi-simplicial set;
  indeed, the Yang--Baxter equation implies the necessary
  simplicial identities.
\end{remark}

\smallskip

For a fixed monoidal category
$\mathcal X = (\mathcal X,\oplus ,\mathbbm 1)$
acting on a category $\mathcal M$,
and a stabilizing object $X\in\mathcal X$,
the choice of Yang--Baxter
element will not affect the stabilizing map, but it will affect the
spaces $W_n(X,A)_\bullet$. 
The identity map $1\in\text{Aut}_{\mathcal X}(X\oplus X)$ is a trivial
choice of Yang--Baxter operator.
But, as is to be expected, this trivial twist is not useful for proving
stability:

\begin{proposition}
 Let $\mathcal X,\mathcal M$,  $A$ and $X$ be as above. If we choose
 the Yang--Baxter operator $\tau\in \Aut_{\mathcal X}(X\oplus X)$
 to be the identity element,
 then the semi-simplicial set $W_n(A,X)_\bullet$ is connected if and only if
  the map
  $$
  G_{n-1}=\text{Aut}_{\mathcal M}(A\oplus X^{\oplus n-1})\xrightarrow{{-}\oplus X}
  \text{Aut}_{\mathcal M}(A\oplus X^{\oplus n})=G_n
  $$
 is an isomorphism.
\end{proposition}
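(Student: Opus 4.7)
The plan is to observe that the choice $\tau = \mathrm{id}_{X \oplus X}$ trivializes every face map of $W_n(A, X)_\bullet$, which reduces the connectedness question to whether $W_n(A, X)_0$ is a singleton, and this in turn encodes bijectivity of the stabilization map.

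First I would inspect the face map formula in Definition~\ref{def:WS}: $d_i f$ is $f$ precomposed with $\mathrm{id}_B \oplus b_{X^{\oplus i},X}^{-1} \oplus \mathrm{id}_{X^{\oplus p-i}}$, where the braiding $b$ lives in $\BB$ and is transported to $\mathcal M$ via the action $\alpha_\tau$ of Proposition~\ref{prop:YB}. The braid $b_{X^{\oplus i},X}^{-1}$ is a product of elementary braids $\sigma_j$, each of which $\alpha_\tau$ sends to a morphism of the form $\mathrm{id} \oplus \tau \oplus \mathrm{id}$. With $\tau = \mathrm{id}$ these are all the identity, so $b_{X^{\oplus i},X}^{-1}$ acts as the identity on $B \oplus X^{\oplus p+1}$. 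Hence $d_i f = f$ up to canonical reassociation, and $d_i[B,f] = [B \oplus X, f]$ independently of $i$.

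Because $d_0 = d_1$ on every $1$-simplex, each edge of $W_n(A,X)_\bullet$ is a loop at its unique endpoint, and the higher simplices only attach cells inside these loops. Thus the natural map $W_n(A,X)_0 \to \pi_0|W_n(A,X)_\bullet|$ is a bijection, and $W_n(A,X)_\bullet$ is connected precisely when $W_n(A,X)_0$ contains exactly one element.

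Finally I would identify this set with a coset space. The tautological pair $v_0 = [A \oplus X^{\oplus n-1}, \mathrm{id}]$ is always a vertex, and under the injectivity property of Proposition~\ref{prop:Mmonoid} (together with the cancellation on isomorphism classes that it entails in our setting) any vertex admits a representative with $B = A \oplus X^{\oplus n-1}$. Unpacking the equivalence relation on such representatives then shows $W_n(A,X)_0 \cong G_n / \mathrm{image}(G_{n-1} \to G_n)$. Thus connectedness is equivalent to surjectivity of stabilization, which combined with the assumed injectivity is exactly saying that $G_{n-1} \to G_n$ is an isomorphism. The hard part will be the cancellation step, i.e.\ arguing that every $B$ with $B \oplus X \cong A \oplus X^{\oplus n}$ is already isomorphic to $A \oplus X^{\oplus n-1}$; this is automatic in the groupoid situations we care about but would require a separate hypothesis in full generality.
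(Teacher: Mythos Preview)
Your argument is correct and follows essentially the same route as the paper: observe that with $\tau=\mathrm{id}$ all face maps coincide, so every simplex has all vertices equal and $\pi_0|W_n(A,X)_\bullet|\cong W_n(A,X)_0\cong G_n/G_{n-1}$, whence connectedness is equivalent to surjectivity (hence, under the standing injectivity hypothesis, to being an isomorphism). Your explicit flagging of the cancellation step---that any $B$ with $B\oplus X\cong A\oplus X^{\oplus n}$ must be isomorphic to $A\oplus X^{\oplus n-1}$---is something the paper silently absorbs into its identification of the $0$-simplices with $G_n/G_{n-1}$.
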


\begin{proof}
  If $\tau$ is the identity element,
  all face maps $d_i$ are equal to the canonical map $G_n/G_{n-p-1}\to G_n/G_{n-p}$.
  In particular, the vertices of any $p$-simplex are all equal,
  so the semi-simplicial set $W_n(A,X)_\bullet$ is isomorphic to a disjoint union
  of semi-simplicial sets, one for each $0$-simplex. The result
  follows from the fact that the set of $0$-simplices is precisely
the quotient  $G_n/G_{n-1}$. 
\end{proof}
In fact,
Barucco proved in his master thesis a result that translates to the following
stronger statement (stated in the thesis in the context of a groupoid acting on itself,
i.e.~$\mathcal M=\mathcal X$):

\begin{lemma}\cite[Lem 3.1]{Barucco}
The space $W_n(A,X)$ is connected if and only if $1^{\oplus n-2}\oplus
\tau$ and $G_{n-1}\oplus 1$ together generate $G_n=\Aut(A\oplus
X^{\oplus n})$.  
\end{lemma}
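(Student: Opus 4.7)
The plan is to unwind Definition~\ref{def:WS} in simplicial degrees $0$ and $1$ and read off the connectivity criterion. Write $G_k=\Aut_{\mathcal M}(A\oplus X^{\oplus k})$, let $\iota_j\colon G_{n-j}\hookrightarrow G_n$ denote the stabilization $g\mapsto g\oplus\id_{X^{\oplus j}}$, and set $\rho=\id_{A\oplus X^{\oplus n-2}}\oplus\tau\in G_n$. Postcomposition makes $G_n$ act on $W_n(A,X)_p$ from the left, compatibly with the face maps. Under the implicit assumption that every $B\in\mathcal M$ with $B\oplus X^{\oplus p+1}\cong A\oplus X^{\oplus n}$ is isomorphic to $A\oplus X^{\oplus n-p-1}$, this action is transitive with stabilizer $\iota_{p+1}(G_{n-p-1})$ at the basepoint $[A\oplus X^{\oplus n-p-1},\id]$. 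This yields $G_n$-equivariant identifications $W_n(A,X)_0\cong G_n/\iota_1(G_{n-1})$ and $W_n(A,X)_1\cong G_n/\iota_2(G_{n-2})$.

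Next, I would compute the two faces of the basepoint edge $[\id]_2$. The face $d_0$ uses $b_{X^{\oplus 0},X}^{-1}=\id$, so $d_0[\id]_2=[\id]_1$. The face $d_1$ uses $b_{X,X}^{-1}$, which the monoidal functor $\Phi_{X,\tau}$ of Section~\ref{sec:YB} translates into $\id\oplus\tau^{-1}$, so $d_1[\id]_2=[\rho^{-1}]_1$. By $G_n$-equivariance, a general edge $[f]_2$ has endpoints $[f]_1$ and $[f\rho^{-1}]_1$.

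Consequently, the equivalence relation on vertices generated by edges identifies $[f]_1$ with $[fh]_1$ for every $h$ in the subgroup $H=\langle\iota_1(G_{n-1}),\rho\rangle\le G_n$, with $\iota_1(G_{n-1})$ contributing trivially as it is already absorbed into the coset structure, and $\rho$ contributing via the edges. Hence $\pi_0W_n(A,X)_\bullet$ is in bijection with $G_n/H$, and the space is connected if and only if $H=G_n$, which is the stated generation condition.

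The main point requiring care is the $G_n$-equivariance of the face maps under the coset identifications, involving some bookkeeping between left and right multiplication. I would also note that whether the face map involves $\tau$ or $\tau^{-1}$ is a convention-dependent issue that is irrelevant to the statement, since $\langle\iota_1(G_{n-1}),\rho\rangle=\langle\iota_1(G_{n-1}),\rho^{-1}\rangle$.
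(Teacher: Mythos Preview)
The paper does not give its own proof of this lemma; it is cited from Barucco's thesis without argument. Your proof is the natural one and is correct: identifying the $0$- and $1$-simplices with the coset spaces $G_n/\iota_1(G_{n-1})$ and $G_n/\iota_2(G_{n-2})$, and computing that the two face maps send $[f]_2$ to $[f]_1$ and $[f\rho^{-1}]_1$, reduces $\pi_0$ to the set of double cosets $G_n/H$ for $H=\langle\iota_1(G_{n-1}),\rho\rangle$, which is a point precisely when $H=G_n$. The cancellation hypothesis you flag (that any $B$ with $B\oplus X^{\oplus p+1}\cong A\oplus X^{\oplus n}$ is isomorphic to $A\oplus X^{\oplus n-p-1}$) is indeed required for the coset description, and is used tacitly throughout the paper whenever $W_n(A,X)_\bullet$ is handled in this way. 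Your remark that $\tau$ versus $\tau^{-1}$ is immaterial is also correct and worth keeping.
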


The connectivity of the semi-simplicial set $W_n(A,X)$ (or of the associated simplical complex defined in \cite[Def 2.8]{RWW17}) can be thought of as a measure a form of {\em higher generation} of the group $G_n$ by the cosets of the subgroups $G_{n-p}$ for $p\ge 1$ and braid subgroups generated by the chosen Yang--Baxter element $t$,
in a way similar to the notion of higher generation for a family of subgroups of a group defined in \cite[2.1]{AbeHol}.

\subsection{Braidings and bidecorated surfaces}\label{sec:notbraided}
We show in this section that the Yang--Baxter operator $T$ on the bidecorated
disk $D$ in the groupoid $\M$ does not come from a braiding on the
subcategory of $\M$ generated by the disk. In fact, we will show that
this subcategory does not admit a braiding.

\medskip

Let $D=(D^2,1,\id)$ be the standard bidecorated disk of Section~\ref{sec:category},
where we recall that $X_1=D^2$.
We define a ``rotated'' bidecorated disk $\overline D=(D^2,1,r_\pi)$,
where $r_\pi$ is the rotation of $\del X_1 = \del D^2$ by $\pi$ radians,
which has the effect of interchanging the intervals $I_0$ and $I_1$.
Rotating all of $D^2$
by $\pi$ then induces a morphism $\iota\colon D\to \overline D$ in $\M$,
and likewise morphisms
$$
\iota^{\hash m}\colon D^{\hash m}\rar
\overline D^{\hash m}
$$
for every $m\ge 1$, each which we will by abuse of notation also denote by $\iota$.
The morphism $\iota$ can be identified with the hyperelliptic involution
of the underlying surface depicted in Figure~\ref{fig:hyperelliptic-involution}
for the two cases $m = 2g$ and $m = 2g+1$,
where in the latter case the boundary components
are exchanged by $\iota$.
\begin{figure}
  \def\svgwidth{0.8\textwidth}
  %% Creator: Inkscape 1.2.1 (9c6d41e410, 2022-07-14, custom), www.inkscape.org
%% PDF/EPS/PS + LaTeX output extension by Johan Engelen, 2010
%% Accompanies image file '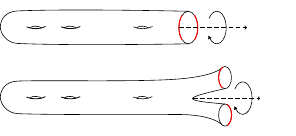' (pdf, eps, ps)
%%
%% To include the image in your LaTeX document, write
%%   \input{<filename>.pdf_tex}
%%  instead of
%%   \includegraphics{<filename>.pdf}
%% To scale the image, write
%%   \def\svgwidth{<desired width>}
%%   \input{<filename>.pdf_tex}
%%  instead of
%%   \includegraphics[width=<desired width>]{<filename>.pdf}
%%
%% Images with a different path to the parent latex file can
%% be accessed with the `import' package (which may need to be
%% installed) using
%%   \usepackage{import}
%% in the preamble, and then including the image with
%%   \import{<path to file>}{<filename>.pdf_tex}
%% Alternatively, one can specify
%%   \graphicspath{{<path to file>/}}
%% 
%% For more information, please see info/svg-inkscape on CTAN:
%%   http://tug.ctan.org/tex-archive/info/svg-inkscape
%%
\begingroup%
  \makeatletter%
  \providecommand\color[2][]{%
    \errmessage{(Inkscape) Color is used for the text in Inkscape, but the package 'color.sty' is not loaded}%
    \renewcommand\color[2][]{}%
  }%
  \providecommand\transparent[1]{%
    \errmessage{(Inkscape) Transparency is used (non-zero) for the text in Inkscape, but the package 'transparent.sty' is not loaded}%
    \renewcommand\transparent[1]{}%
  }%
  \providecommand\rotatebox[2]{#2}%
  \newcommand*\fsize{\dimexpr\f@size pt\relax}%
  \newcommand*\lineheight[1]{\fontsize{\fsize}{#1\fsize}\selectfont}%
  \ifx\svgwidth\undefined%
    \setlength{\unitlength}{85.03937008bp}%
    \ifx\svgscale\undefined%
      \relax%
    \else%
      \setlength{\unitlength}{\unitlength * \real{\svgscale}}%
    \fi%
  \else%
    \setlength{\unitlength}{\svgwidth}%
  \fi%
  \global\let\svgwidth\undefined%
  \global\let\svgscale\undefined%
  \makeatother%
  \begin{picture}(1,0.45786667)%
    \lineheight{1}%
    \setlength\tabcolsep{0pt}%
    \put(0,0){\includegraphics[width=\unitlength,page=1]{hyperelliptic-involution.pdf}}%
    \put(0.34887332,0.35882954){\makebox(0,0)[lt]{\lineheight{1.25}\smash{\begin{tabular}[t]{l}$\cdots$\end{tabular}}}}%
    \put(0.34887332,0.12110242){\makebox(0,0)[lt]{\lineheight{1.25}\smash{\begin{tabular}[t]{l}$\cdots$\end{tabular}}}}%
  \end{picture}%
\endgroup%

  \caption{The hyperelliptic involutions $\iota$ of $S_{g,1}$ and $S_{g,2}$}\label{fig:hyperelliptic-involution}
\end{figure}
The morphism $\iota$ induces an identification
\begin{align*}
  \text{Aut}_{\M}(D^{\hash m})
  &\tox{\cong} \text{Aut}_{\M}(\overline D^{\hash m})
  \\
  f &\mapstox{\phantom{\sim}} \iota \circ f \circ \iota^{-1}
\end{align*}
In order to precisely state the failure of $T$ to extend to a braiding,
we will also need the identification
\begin{align*}
 I\colon \text{Aut}_{\M}(D^{\hash m})
  &\tox{\cong} \text{Aut}_{\M}(\overline D^{\hash m})\\
   f &\mapstox{\phantom{\sim}}  f
\end{align*}
that comes from the fact that an element $f\in\text{Aut}_{\M}(D^{\hash m})$
is just a mapping class for the underlying surface of $D^{\hash m}$,
which is the same as the underlying surface of $\overline D^{\hash m}$,
so $f$ can just as well be viewed as an element of
$\text{Aut}_{\M}(\bar D^{\hash m})$.
In contrast with the identification induced by $\iota$,
the second identification is  ``external'',
in the sense that it does not come from a morphism in $\M$.

Viewing $\iota$ as a diffeomorphism of the underlying surface $X_m$ of
$D^{\hash m}$ that does not fix the boundary,
and specifically exchanges the marked points $b_0 = I_0(\rfrac 1 2)$
and $b_1 = I_1(\rfrac 1 2)$, we see that it takes the isotopy class of
arc $\rho_i$ of Section~\ref{sec:disk}
to the reversed arc $\overline{\rho_i}$. 
%We thus get an induced involution
%\begin{equation}
%  \label{eq:fund-grpd}
%  \Pi (X_m,\lbrace b_0,b_1\rbrace )\xrightarrow{\iota_*}
%  \Pi (X_m,\lbrace b_0,b_1\rbrace ),
%\end{equation}
%where $\Pi (X_m,\lbrace b_0,b_1\rbrace )$ denotes the
%full subgroupoid of the fundamental groupoid $\Pi (X_m)$
%which is spanned by the objects $b_0$ and $b_1$,
%so the objects of $\Pi (X_m,\lbrace b_0,b_1\rbrace )$
%are $b_0$ and $b_1$,
%and a morphism from $b_i$ to $b_j$ is a homotopy class,
%relative to the boundary, of a path
%from $b_i$ to $b_j$.
%The groupoid $\Pi (X_m,\lbrace b_0,b_1\rbrace )$ is
%Recall from Section~\ref{sec:disk} the isotopy class of arc $\rho_i$ going from $b_0$
%to $b_1$ in the $i$th disc of a sum $D^{\hash m}$, with
%$\overline{\rho_i}$ the reversed arc, running from $b_1$ to $b_0$.
We will use in the proof of the following result that the  homotopy classes $\rho_i$
generate the fundamental groupoid of $X_m$ based at the points $b_0,b_1$.\footnote{
  As a full subgroupoid of the ordinary fundamental groupoid of $X_m$,
  this groupoid is the one spanned by the objects corresponding to the
  points $b_0,b_1\in X_m$.
  } The mapping class 
$\iota$  is in fact
completely determined by the fact that $\iota(\rho_i) = \overline{\rho_i}$.

\newpage

\begin{proposition}\label{prop:nobraiding}
  Let $\mathbf D\subset\M$ denote the full monoidal subcategory
  generated by $D$.
  \begin{enumerate}[label=(\roman*)]
  \item The monoidal category $\mathbf D$ does not admit a braiding.
    In particular, the monoidal functor $$\Phi\colon (\BB,\oplus)\to
    (\mathbf D,\hash)\subset (\M,\hash)$$ does not come from a braiding on
    $\mathbf D$.
  \item Let $f\in \Aut_\M(D^{\hash m})$ and $g\in \Aut_\M(D^{\hash n})$, and
    put $\beta_{m,n}=\Phi (b_{m,n})$, where the block braid $b_{m,n}$
    is the braid which passes the last $n$ strands over the first $m$ strands.
    Then 
    $$
    \beta_{m,n} \circ (f\hash g)  \circ \beta_{n,m}^{-1}
    =
    \begin{cases}
      g \hash (\iota^{-1}\circ  f \circ \iota)
      &\text{if } n \text{ is odd},\\
      g\hash f&\text{else,}
    \end{cases}
    $$
    for $\iota\colon D^{\hash m}\to \overline D^{\hash m}$ the involution defined
    above, and where $f$ in the rightmost expression is the map $f$ considered as
    an element of $\Aut_\M(\overline D^{\hash m})$ via the isomorphism
    $I$ defined above.
  \end{enumerate}
\end{proposition}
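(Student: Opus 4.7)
The plan for part (ii) is to realize $\beta_{m,n}$ as a concrete mapping class and directly compute the effect of conjugation on mapping classes supported on the two disk-blocks. By the Birman--Hilden theory recalled at the end of Section~\ref{sec:braidedaction}, the image $\beta_{m,n}=\Phi(b_{m,n})$ lifts, via the standard 2-fold branched cover $X_{m+n}\to D^2$, to the mapping class of $X_{m+n}$ corresponding to the half-twist of $D^2$ that exchanges the first $m$ branch points with the last $n$. Geometrically this diffeomorphism interchanges the subsurface $Y_m\subseteq X_{m+n}$ built from the first $m$ disks with the subsurface $Y_n$ built from the last $n$; during the swap, each of the $m$ handles of $Y_m$ is carried over each of the $n$ handles of $Y_n$, and accumulates a total rotation by $n\pi$. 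Modulo $2\pi$ this rotation is trivial for $n$ even but equal to the hyperelliptic involution $\iota$ for $n$ odd, which is the origin of the $\iota^{-1} f \iota$ in the formula. The symmetric $m\pi$ rotation of $Y_n$ is exactly cancelled by using $\beta_{n,m}^{-1}$ rather than $\beta_{m,n}^{-1}$ on the right, which explains the asymmetry of the conjugation formula.

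To make this rigorous, I would verify the formula by induction on $n$, using a recursive decomposition such as $b_{m,n}=b_{m,n-1}\cdot w$ for an appropriate short braid $w$ and tracking the action on the generating arcs $\rho_1,\ldots,\rho_{m+n}$ via Lemma~\ref{lem:Trho}; the base case $n=0$ is immediate. The main bookkeeping challenge is to keep the parity-dependent reversal of the boundary parametrization transparent at each step, so that the accumulated twist is visibly $\iota$ exactly when $n$ is odd.

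For part (i), the plan is to combine (ii) with the hexagon axioms. Suppose $\mathbf D$ admits a braiding $\tilde b$. Then $\tilde b_{D,D}\in\Aut_\M(D\hash D)\cong\mathbb Z\langle T\rangle$ is a Yang--Baxter operator, so $\tilde b_{D,D}=T^k$ for some integer $k$. By the Birman--Hilden injectivity of $\Phi|_{B_3}$ recalled at the end of Section~\ref{sec:braidedaction}, the Yang--Baxter equation for $T^k$ reduces to $\sigma_1^k\sigma_2^k\sigma_1^k=\sigma_2^k\sigma_1^k\sigma_2^k$ in $B_3$, which forces $k\in\{-1,0,1\}$. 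The case $k=0$ fails naturality because, for instance, $T_1$ and $T_3$ are distinct in $\Aut_\M(D^{\hash 4})$, so $(T_1\hash 1)\neq (1\hash T_1)$ as elements of $\Aut_\M(D^{\hash 4})$. For $k=\pm1$, the hexagon axioms force $\tilde b_{m,n}$ to equal the image of $b_{m,n}$ under the monoidal functor $\BB\to\mathbf D$ sending $\sigma_1\mapsto T^{\pm 1}$; in particular $\tilde b_{m,n}=\beta_{m,n}^{\pm 1}$. Combining the braiding naturality $\tilde b_{m,n}(f\hash g)\tilde b_{m,n}^{-1}=g\hash f$ with the formula of (ii) for $n$ odd yields $\beta_{m,n}\beta_{n,m}^{-1}=1\hash(f^{-1}\iota^{-1}f\iota)$ for every $f\in\Aut_\M(D^{\hash m})$. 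Since the left side does not depend on $f$, this would force $\iota$ to be central in $\Aut_\M(D^{\hash m})$ for all $m$. But for $m\geq 7$ (so that $X_m$ has genus at least $3$) the hyperelliptic mapping class group is a proper subgroup of $\Aut_\M(D^{\hash m})$, so any Dehn twist along a curve not preserved by $\iota$ fails to commute with $\iota$, giving the required contradiction.

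The main obstacle will be the careful geometric verification in part (ii), specifically checking that the accumulated rotation of $Y_m$ during the block swap is precisely $n\pi$ and that this coincides with $\iota$ modulo $2\pi$ exactly when $n$ is odd. Organizing the induction so that the $\iota$-twist accumulates one reversal at a time, and disentangling this from the action of the interior Dehn twists given by Lemma~\ref{lem:Trho}, is the technical heart of the argument.
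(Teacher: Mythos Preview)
Your overall strategy is sound and close to the paper's, but with one genuinely different ingredient, and one step that needs more justification.

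For part~(ii), the paper does not use the branched-cover picture or an induction on~$n$; it writes $\beta_{m,n}$ explicitly as the product $(T_n\cdots T_{m+n-1})\cdots(T_1\cdots T_m)$ and computes $\beta_{m,n}(\rho_i)$ directly from Lemma~\ref{lem:Trho}, obtaining $\beta_{m,n}(\rho_{m+i})\simeq\rho_i$ and $\beta_{m,n}(\rho_i)\simeq \rho_1*\iota(\rho_2)*\cdots*\iota^{n-1}(\rho_n)*\iota^n(\rho_{i+n})*\cdots$ for $i\le m$. Since any curve is a word in the $\rho_j$'s and conjugating $T_c$ gives $T_{\phi(c)}$, the claim follows. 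Your Birman--Hilden picture is good intuition for why the answer comes out as $\iota^n$, but the actual verification you propose (tracking the $\rho_i$'s via Lemma~\ref{lem:Trho}) is precisely what the paper does, only organized as a single closed-form computation rather than an induction; the direct computation is in fact cleaner and avoids the bookkeeping you flag as the main obstacle.

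For part~(i), your argument for $k\in\{0,\pm 1\}$ is essentially the paper's: the paper phrases the $k=\pm 1$ case as ``find a curve~$c$ with $\iota(c)\not\simeq c$'', which is equivalent to your observation that conjugation by~$\iota$ is nontrivial on the full mapping class group. The real difference is the case $|k|\ge 2$. The paper does not invoke the Yang--Baxter equation here; instead it uses naturality directly, arguing that $\tilde\beta_{2,1}=T_1^kT_2^k$ would have to send $a_1$ to $a_2$, and then rules this out by an intersection-number calculation (Propositions~3.2 and~3.4 of \cite{MCG-primer}). Your route via Birman--Hilden, reducing to $\sigma_1^k\sigma_2^k\sigma_1^k=\sigma_2^k\sigma_1^k\sigma_2^k$ in~$B_3$, is valid but you assert without proof that this forces $k\in\{-1,0,1\}$. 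This is true (for instance because the images of $\sigma_1^k,\sigma_2^k$ in $PSL_2(\Z)$ generate a free group once $|k|\ge 2$, by the standard ping-pong argument), but it is not immediate and you should supply the argument or a reference. The paper's intersection-number route has the advantage of staying entirely inside the surface and not requiring any auxiliary facts about~$B_3$.
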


\begin{proof}
We start by proving (ii). It is enough to check the statement when $f$
and $g$ are Dehn twists, as those 
generate the mapping class groups. 
Note that if $c$ is a curve in the underlying surface $X_{m+n}$
of $D^{\hash m + n}$, and $T_c$ denotes the Dehn twist along $c$, then conjugating $T_c$  by a diffeomorphism $\phi$ of the surface gives 
$$\phi\circ T_c\circ \phi^{-1}=T_{\phi(c)}.$$
%In particular, if a mapping class $f\in \pi_0\Diff(D^{\hash m})$ is expressed as a product of Dehn twists along curves $c_i$, then its conjugation $\iota\circ f\circ \iota$ can be expressed as the product of the Dehn twists along the images of the curves $\iota(c_i)$. 
Recall further that the isotopy class of a Dehn twist $T_c$ depends only on the free homotopy class of the curve $c$.
We are therefore to compute the images of curves in $D^{\hash m}$ and $D^{\hash n}$ under the map $\beta_{m,n}$, as free homotopy classes. A curve $c$ can be written, up to free homotopy, as a concatenation of the arcs $\rho_i$ and their inverses $\overline\rho_i$,
as the homotopy classes of these arcs
generate the fundamental groupoid of the surface $X_{m+n}$ based at $b_0,b_1$.
In particular, write
  \begin{equation}\label{equ:c}
    c \simeq \rho_{i_1}*\overline\rho_{i_2}*\rho_{i_3}\dots *\overline\rho_{i_k}.
  \end{equation}
  The mapping class $\beta_{m,n}$ can be written as the composition
  $$
  \beta_{m,n}=  (T_n\circ \dots \circ T_{m+n-1})\circ  \dots\circ (T_2\circ \dots \circ T_{m+1}) \circ (T_1\circ \dots \circ T_m)
  $$
  and hence we can compute the image of each $\rho_i$ using Lemma~\ref{lem:Trho}.
  For $r>0$, denote by $T_{i, i+r}$ the composition of Dehn twists $T_i\circ T_{i+1}\circ \dots\circ T_{i+r}$. 
  Note first that
  $$T_{i, j}(\rho_{j+1}) \simeq T_{i, j-1}(\rho_{j}) \simeq \dots \simeq \rho_i.$$
  From this, it follows that for $i\ge 1$, 
  \begin{align*}
    \beta_{m,n}(\rho_{m+i})&\simeq (T_{n, m+n-1})\circ \dots\circ  (T_{1, m})(\rho_{m+i}) \\
                           &\simeq (T_{n, m+n-1})\circ \dots\circ  (T_{i, m+i-1})(\rho_{m+i}) \\
                           &\simeq (T_{n, m+n-1})\circ \dots\circ  (T_{i+1, m+i})(\rho_{i}) \\
    &\simeq\rho_i. 
  \end{align*}
  On the other hand, for $i\le k\le j$, we have
  $$T_{i, j}(\rho_{k})\simeq T_{i, k}(\rho_{k})\simeq T_{i, k-1}(\rho_k*\overline\rho_{k+1}*\rho_k)\simeq \rho_i*\overline\rho_{k+1}*\rho_i,$$
  from which we can deduce that for $i\le m$, 
  \begin{align*}
    \beta_{m,n}(\rho_i)&\simeq  (T_{n, m+n-1})\circ \dots\circ  (T_{1, m})(\rho_{i}) \\
                       &\simeq  (T_{n, m+n-1})\circ \dots\circ  (T_{2, m+1})(\rho_1*\overline\rho_{i+1}*\rho_1) \\
                       &\simeq  (T_{n, m+n-1})\circ \dots\circ  (T_{3, m+2})(\rho_1*\overline\rho_2*\rho_{i+2}*\overline\rho_{2}*\rho_1)\\
                       & \simeq  \cdots \\
                       &\simeq \rho_1*\iota(\rho_2)*\dots* \iota^{n-1}(\rho_{n})*\iota^n(\rho_{i+n})*\iota^{n-1}(\rho_{n})*\dots*\iota(\rho_{2})*\rho_1
  \end{align*}
  since $\iota^j(\rho_i)$ is $\rho_i$ when $j$ is even and $\overline\rho_i$ when $j$ is odd.

  If the curve $c$ lies in the last $n$ disks  $D^{\hash n}$ inside  $D^{\hash m+n}$, it can be written as a product \eqref{equ:c} with each $i_j>m$. Then the above computation gives that
  $$
  \beta_{m,n}(c)\simeq \rho_{i_1-m}*\overline\rho_{i_2-m}*\rho_{i_3-m}*\dots *\overline \rho_{i_k-m},
  $$
  that is, $c$ is mapped to the corresponding curve in the {\em first} $n$ disks  $D^{\hash n}$ inside  $D^{\hash n+m}=D^{\hash m+n}$.

  If the curve $c$ instead lies in the first $m$ disks  $D^{\hash m}$ inside  $D^{\hash m+n}$, it can be written as a product \eqref{equ:c} with each $i_j\le m$. Then the above computation gives that
  \begin{align*}
    \beta_{m,n}(c)&\simeq \rho_1*\iota(\rho_2)*\dots* \iota^{n-1}(\rho_{n})*\iota^n(\rho_{i_1+n})*\iota^{n+1}(\rho_{i_2+n})*\dots*\iota^{n+1}(\rho_{i_k+n})\\
                         & \hspace{6.66cm} *\iota^{n}(\rho_{n})*\dots*\iota^2(\rho_{2})*\iota(\rho_1)\\
                  &\simeq  \iota^n(\rho_{i_1+n})*\iota^{n+1}(\rho_{i_2+n})*\iota^{n}(\rho_{i_3+n})\dots*\iota^{n+1}(\rho_{i_k+n}) \\
                  &\simeq \iota^n ( \rho_{i_1+n}*\overline\rho_{i_2+n}*\rho_{i_3+n}\dots *\overline\rho_{i_k+n})
  \end{align*}
  Hence $c$ is mapped to the curve $\iota^n(c)$ in the last $m$ disks $D^{\hash m}$ inside  $D^{\hash n+m}=D^{\hash m+n}$, from which the statement follows.

  \smallskip

  We are left to prove (i). To see that the images $\beta_{m,n}$ of
  block braids under $\beta$  do not define a braiding in $\mathbf D$,
 using (ii) it is enough to find a curve $c$ in $D^{\hash m}$ for some $m$
 so that $\iota (c)\not\simeq c$, and such curves are plentiful. 
 %, and observe that
%  by (ii) we have $\beta_{m,1}\circ (T_c\hash\text{id})\neq (\text{id}\hash T_{\iota(c)})\circ\beta_{m,1}$.
  The same argument shows that the inverses $\beta_{m,n}^{-1}$
  likewise do  not define a braiding.

  Now suppose that $\tilde \beta$ is a braiding on $\mathbf D$. The braiding is determined by
  $\tilde\beta_{1,1}\in\Aut_\M (D^{\hash 2})\cong \mathbb Z$,  a group
  generated by the Dehn twist $T_1$.
  We have excluded the possibilities $\tilde\beta_{1,1} = T_1^{\pm
    1}$, and $\tilde\beta_{1,1} = \id$ is similarly ruled out using
  now the fact that curves are not moved at all by the identity.
 So  assume that  $\tilde\beta_{1,1} = T_1^k$, with $|k|> 1$.
  Then $\tilde\beta_{2,1} = T_1^kT_2^k$ would have to satisfy
  $T_1^kT_2^k(a_1) = a_2$ in order for naturality to hold, where $T_i$
  is the Dehn twist along the curve $a_i$ as in Section~\ref{sec:braidedaction}.
  Applying Proposition 3.2 in~\cite{MCG-primer} twice, we get 
 that the intersection number $i(a_2,T_2^k(a_1)) =
  i(T_1^k(T_2^k(a_1)),T_2^k(a_1)) = |k|i(a_1,T_2^k(a_1)^2 =
  |k|^2i(a_1,a_2)^4 = |k|^2$. On the other hand, using Proposition 3.4 in~\cite{MCG-primer}
  we obtain 
  $$
  |k|^2 = i(a_2,T_2^k(a_1)) = |i(T_2^k(a_1),a_2)  - |k|i(a_1,a_1)i(a_1,a_2)|\leq i(a_1,a_2) = 1,
  $$
  where we have also used that $i(a_1,a_1) = 0$.  This contradicts our
  assumption of $\tilde \beta_{1,1}$. 
\end{proof}

  \bibliographystyle{amsalpha}
      \bibliography{sources}
\end{document}